\newtheorem{thm}{Theorem}[section]
\newtheorem{lemma}[thm]{Lemma}
\newtheorem{coro}[thm]{Corollary}
\newtheorem{prop}[thm]{Proposition}
\theoremstyle{definition}
\newtheorem{defn}[thm]{Definition}
\newtheorem{notation}[thm]{Notation}
\theoremstyle{remark}
\numberwithin{equation}{section}
\newenvironment{ex}{\refstepcounter{thm}\begin{proof}[Example \emph{\thethm}]}{\end{proof}}
\newenvironment{rem}{\refstepcounter{thm}\begin{proof}[Remark \emph{\thethm}]}{\end{proof}}
\numberwithin{equation}{section}
\def\RR{\mathbb{R}}
\def\ZZ{\mathbb{Z}}
\def\Span{\mathrm{Span}}
\def\M{\mathbb{A}}
\newcommand{\garrow}[1]{\stackrel{#1}{\longrightarrow}}
\colorlet{gpurple}{red!35!blue}
\colorlet{ggreen}{green!50!black}
\newcommand{\ThreeD}[3]{($ #1*(1,0) + #2*(-.9,-.436) + #3*(-.5,.866) - .5*#3*(-.9,-.436)$)} 
\colorlet{dyellow}{yellow!50!black}
\newcommand{\ychamber}[1][(0,0)]{
\begin{scope}[shift={#1},opacity=.5]
	\node[coordinate] (000) at \ThreeD{0}{0}{0} {};
	\node[coordinate] (001) at \ThreeD{0}{0}{1} {};
	\node[coordinate] (101) at \ThreeD{1}{0}{1} {};
	\node[coordinate] (011) at \ThreeD{0}{1}{1} {};
	\node[coordinate] (111) at \ThreeD{1}{1}{1} {};
	\node[coordinate] (112) at \ThreeD{1}{1}{2} {};
	\draw[dyellow,fill=yellow!25] (000) to (101) to (112) to (011) to (000);
	\draw[dyellow!50] (101) to (001) to (011);
	\draw[dyellow!50] (000) to (001) to (112);
	\draw[dyellow] (101) to (111) to (011);
	\draw[dyellow] (000) to (111) to (112);
\end{scope}
}
\newcommand{\bchamber}[1][(0,0)]{
\begin{scope}[shift={#1},opacity=.5]
	\node[coordinate] (011) at \ThreeD{0}{1}{1} {};
	\node[coordinate] (111) at \ThreeD{1}{1}{1} {};
	\node[coordinate] (112) at \ThreeD{1}{1}{2} {};
	\node[coordinate] (122) at \ThreeD{1}{2}{2} {};
	\draw[blue,fill=blue!25,opacity=.5] (011) to (111) to (112) to (122) to (011);
	\draw[blue!50] (011) to (112);
	\draw[blue] (111) to (122);
\end{scope}
}
\newcommand{\rchamber}[1][(0,0)]{
\begin{scope}[shift={#1},opacity=.5]
	\node[coordinate] (101) at \ThreeD{1}{0}{1} {};
	\node[coordinate] (111) at \ThreeD{1}{1}{1} {};
	\node[coordinate] (112) at \ThreeD{1}{1}{2} {};
	\node[coordinate] (212) at \ThreeD{2}{1}{2} {};
	\draw[red,fill=red!25,opacity=.5] (101) to (111) to (112) to (212) to (101);
	\draw[red!50] (101) to (112);
	\draw[red] (111) to (212);
\end{scope}
}
\tikzstyle{mutable}=[inner sep=0.5mm,circle,draw,minimum size=2mm]
\tikzstyle{frozen}=[inner sep=.9mm,rectangle,draw]
\tikzstyle{dot} = [fill=black!50,inner sep=0.25mm,circle,draw,minimum size=1mm]
\tikzstyle{faint dot} = [draw=black!20,fill=black!10,inner sep=0.25mm,circle,minimum size=1mm]
\tikzstyle{blue dot} = [draw=blue,fill=blue!50,inner sep=0.25mm,circle,minimum size=1mm]
\tikzstyle{red dot} = [draw=red,fill=red!50,inner sep=0.25mm,circle,minimum size=1mm]
\tikzstyle{marked}=[inner sep=0.5mm,circle,draw,blue!75!black,fill=blue!50]
\tikzstyle{sgreen dot} = [draw=ggreen,fill=ggreen!50,inner sep=0.15mm,circle,minimum size=1mm]
\tikzstyle{sblue dot} = [draw=blue,fill=blue!50,inner sep=0.15mm,circle,minimum size=1mm]
\tikzstyle{sred dot} = [draw=red,fill=red!50,inner sep=0.15mm,circle,minimum size=1mm]
\def\Hom{\mathrm{Hom}}
\def\End{\mathrm{End}}
\def\gldim{\mathrm{gl.dim}}
\def\rgldim{\mathrm{r.gl.dim}}
\def\lgldim{\mathrm{l.gl.dim}}
\DeclareMathOperator{\Spec}{Spec}
\title{Non-Commutative Resolutions of Toric Varieties}  
\author{Eleonore Faber}
\address{School of Mathematics, University of Leeds, Leeds, LS2 9JT, UK}
\email{e.m.faber@leeds.ac.uk}
\author{Greg Muller}
\address{Department of Mathematics, University of Oklahoma, Norman, OK 73019, USA}
\email{gmuller@math.ou.edu}
\author{Karen E. Smith}
\address{Department of Mathematics, University of Michigan, Ann Arbor, MI 48109, USA }
\email{kesmith@umich.edu}
\begin{document}

\thanks{ The first author was funded by the European Union's Horizon 2020 research and innovation programme under the Marie Sk{\l}odowska-Curie grant agreement No 789580.
The third author was partially supported by the US National Science Foundation, grant DMS-1501625.
} 
\subjclass[2010]{13C14, %Cohen-Macaulay modules
13A35, %Characteristic p-methods
16E10, %Homological dimension
16S32 %Rings of differential operators
}  
\keywords{toric variety, non-commutative resolution, finite global dimension, non-commutative crepant resolution, rings of differential operators, strong F-regularity, conic module.}
\date{\today}

\maketitle

\begin{abstract}
Let $R$ be the coordinate ring of an affine toric variety. We prove, using direct elementary methods, that the  endomorphism ring 
 $\End_R(\mathbb A),$ where $\mathbb A$ is the  (finite) direct sum of all (isomorphism classes of) conic $R$-modules, 
  has finite global dimension equal to the dimension of $R$. 
  This gives a precise version, and an elementary proof, of a theorem of \v{S}penko and Van den Bergh implying that $\End_R(\mathbb A)$ has finite global dimension.
    Furthermore, we show that $\End_R(\mathbb A)$ is a non-commutative {crepant}  resolution if and only if the toric variety is simplicial.
For toric varieties over a perfect field $\mathsf{k}$ of prime characteristic, we show that the ring of differential operators $D_\mathsf{k}(R)$ has finite global dimension. \end{abstract}
%% Note: We are not supposed to have references in the abstract.
\section{Introduction}

%\subsection{Summary of results}

Consider a local or graded ring $R$ which is commutative and Noetherian. A well-known theorem of Auslander-Buchsbaum and Serre states that $R$ is regular if and only if $R$ has finite global dimension---that is, if and only if every $R$-module has finite projective dimension.

While the standard definition of regularity does not extend to non-commutative rings, the definition of global dimension does, so this suggests that finite global dimension might play the role of regularity for non-commutative rings. This is an old idea going back at least to Dixmier \cite{Dix63}, though since then our understanding of connection between regularity and finite global dimension has been refined by the works of Auslander, Artin, Shelter, Van den Bergh and others.  
One particularly relevant connection here is the following: {\it If $R$ is a regular ring of finite type over a perfect field $\mathsf{k}$, then the ring of differential operators $D_\mathsf{k}(R)$ has finite global dimension.}\footnote{We provide a counterexample to the converse in Theorem \ref{Dmod}.} This  theorem  is due  to Roos  in characteristic zero \cite{Roos, Chase}, and Paul Smith in characteristic $p$  \cite{PSmithCharpRegular}.

In this  paper, we establish the finite global dimension of several non-commutative algebras associated to a \emph{normal toric algebra $R$} over a field $\mathsf{k}$, a type of algebra which is commutative but rarely regular. These algebras are:
\begin{enumerate}
	\item $\mathrm{End}_R(\M)$, where $\M$ is a \emph{complete sum of conic modules}. `Conic modules' are certain combinatorial defined $R$-modules first studied systematically by Bruns and Gubaladze \cite{BrunsGubeladze},  and a direct sum of conic modules is `complete' if every conic module is isomorphic to a summand.

	\item $\mathrm{End}_R(R^{1/q})$, assuming $q$ is sufficiently large. Here, $R^{1/q}$ is the $\mathsf{k}$-algebra spanned by $q$-th roots of monomials in $R$. When $\mathsf{k}$ is perfect and $q$ is a power of the characteristic of $\mathsf{k}$, $R^{1/q}$ is the ring of $q$-th roots of all elements in $R$.
	%Greg wrote "$R^{1/q}$ is isomorphic to the pullback of $R$ under a power of the Frobenius endomorphism" but it is the push-forward.
	\item The ring of $\mathsf{k}$-linear differential operators $D_\mathsf{k}(R)$ of $R$, assuming $\mathsf{k}$ is perfect with positive characteristic. 
\end{enumerate}
Our  primary result is an explicit projective resolution for each simple module for the algebra $\mathrm{End}_R(\M) $ in (1), from which it follows  that the global dimension is equal to the dimension of $R$; see Theorem \ref{gldim}.  Similar results follow for the algebras $\mathrm{End}_R(R^{1/q})$ in case (2), which we show in Proposition \ref{ISOM} are Morita equivalent to $\mathrm{End}_R(\M)$. 
The result for differential operators  follows by taking  a direct limit; see Theorem \ref{Dmod}. Our proofs are elementary,   direct, self-contained, and constructive.  It should be noted that in a very nice paper, \v{S}penko and Van den Bergh independently found less constructive proofs for the finiteness of global dimension in settings (1) and (2), albeit without explicit bounds on the global dimension \cite[1.3.6]{SVdB2}; they also beat us to publication. See  also  \cite{RSvdB3}.

Because these algebras have finite global dimension, they should be regarded as `regular' in an appropriate non-commutative sense. Concretely, we may say the first two are \textbf{non-commutative resolutions} of the toric algebra $R$,  as defined by Bondal and Orlov
 \cite{BondalOrlov}; see also \cite{Leuschke, Wemyss, NCR}.  
 
 Finally, we consider when these non-commutative resolutions are \textbf{crepant}, a condition introduced in \cite{VanDenBergh} which ensures the module category of the resolution is sufficiently close to the module category of the original algebra. We conclude that $\mathrm{End}_R(\M)$ is a non-commutative crepant resolution if and only if the toric algebra $R$ is \emph{simplicial} (Theorem \ref{simplicial}).

This work naturally leads to several questions for future research.
\begin{itemize}
	\item What conditions on a ring $R$ ensure that  $D_\mathsf{k}(R)$ has finite global dimension?
	\item For an arbitrary algebra $R$ of characteristic $p$, what conditions on $R$ imply the global dimension of $\mathrm{End}_R(R^{1/p^e})$ is finite for sufficiently large $e$?
	\item When is  there some (non-complete) sum  $\mathbb B$ of conic modules over a toric ring such that the endomorphism ring  $\mathrm{End}_R(\mathbb B)$ is a non-commutative crepant resolution?
	\item  Can our work on the finite global dimension of $D_\mathsf{k}(R)$ in prime characteristic be used to prove  finite global dimension of   $D_\mathsf{k}(R)$ in characteristic zero  (when $R$ is a toric algebra)? Recent work of Jeffries \cite{Jeff17} shows that the obstruction of reduction to prime characteristic for differential operators introduced in   \cite{SmVdB} may vanish for all toric algebras. See also \cite{MussonVdB} for a more direct approach to showing that rings of differential operators on toric rings of characteristic zero have finite global dimension.
	\end{itemize}

\subsection{The proof: Conic modules of toric algebras}

The preceding results are proven by carefully considering the \emph{conic modules} of a normal toric algebra $R$. These are fractional monomial ideals of $R$ whose monomial support is given by shifting the defining cone of $R$.
Conic modules are rank one and Cohen-Macaulay, and each toric $R$ has finitely many conic modules up to isomorphism. Conic modules may be parameterized by polyhedral \textbf{chambers of constancy} inside a hyperplane arrangement.  These properties of conic modules have been systematically studied by Bruns and Gubeladze \cite{BrunsGubeladze, Bruns/arXiv:math/0408110}.

For each chamber of constancy $\Delta$, we use the combinatorics of the faces in $\Delta$ to construct a chain complex $K_\Delta^\bullet$ of sums of conic modules. The essential property of these complexes is the Acyclicity Lemma (Lemma \ref{acyclicity}), which states that the complex $\mathrm{Hom}_R(A_{\Delta'},K_\Delta^\bullet)$ is either acyclic or a resolution of the ground field $\mathsf{k}$, depending on whether or not $A_\Delta\simeq A_{\Delta'}$.

The benefit of this lemma is as follows. A direct sum of conic modules $\M$ is \emph{complete} if every conic module of $R$ is isomorphic to a summand of $\M$. Our motivating example is the ring $R^{1/n}$ spanned by formal $n$th roots of monomials in $R$, which is a complete sum of conic modules provided $n$ is large enough (Proposition \ref{ISOM}).

Given a complete sum of conic modules $\M$ and a chamber of constancy $\Delta$, the Acyclicity Lemma implies that the complex $\mathrm{Hom}_R(\M,K_\Delta^\bullet)$ is a finite projective resolution of a simple $\mathrm{End}_R(\M)$-module. Using standard arguments on global dimension (and a nice choice of grading),  we can show that every finitely generated $\mathrm{End}_R(\M)$-module has a finite projective resolution; and thus, the finiteness of the global dimension.

\subsection{Relation with the Frobenius map}
The classification of commutative rings of characteristic $p$ according to the behavior of Frobenius has a long tradition in commutative algebra.
 This began with Kunz's theorem that regularity can be characterized by the flatness of Frobenius  \cite{Kunz}, which led to the development of tight closure and F-regularity  \cite{HH90}. Eventually,  relationships with the singularities in  the minimal model program  were discovered and developed  \cite{Smi97b}, \cite{SmithGloballyFRegular}, 
\cite{HaraWatanabe}, \cite{Takagi}, as well as connections with rings of differential operators  \cite{SmithDMod} \cite{SmVdB}. Since then, deeper connections with the minimal model program, including 
 the log canonical threshold, multiplier ideals, and log Fano varieties  (see  for example,  \cite{Schwede} \cite{HaraYoshida}, \cite{ST11},  \cite{Blickle}, \cite{Hacon}),  differential operators  (see, for example \cite{JeffriesNunez}), and F-signature 
 (see \cite{HunekeLeuschke},  \cite{Tucker}, \cite{C-RST}, for example) have blossomed.

Our work suggests that under some hypothesis,  Frobenius may provide  a way to construct a {\it functorial}  non-commutative resolution of singularities. The property that a $\mathbb C$-algebra admits a 
  non-commutative crepant resolution is closely related to rational singularities (see \cite{VanDenBergh, StaffordVanden}). On the other hand,  the property of strong-F-regularity is a prime characteristic analog of rational singularities (at least in the Gorenstein case).
We speculate that for  a strongly F-regular ring $R$ with finite F-representative type (a class of rings including toric rings in prime characteristic),  perhaps  $\End_R(R^{1/p^e})$ always has finite global dimension.   This speculation encompasses the  classes of rings arising in representation theory where this idea has been explored from a somewhat different point of view in \cite{SVdB2,RSvdB3}.

Our work corrects a result of Yasuda  \cite{Yasuda}, and is closely related to recent work of Higashitani and Nakajima  \cite{HibiRings}.  A different approach to the existence of non-commutative crepant resolutions for three dimensional Gorenstein toric rings  in low dimension can be found in  \cite{Broomhead}; see also \cite{BocklandtNCCR}.
As mentioned earlier, our results overlap  considerably with work of 
 Van den Bergh and {\v S}penko, although their machinery is quite different and less constructive;  see \cite{SVdB1}, 
\cite{SVdB2}.
 Our technique, by contrast, is a fairly direct and self-contained, so which should be widely accessible, including to commutative algebraists and algebraic geometers.
    
\subsection{Outline of the paper}

After recalling basic definitions and notation associated with toric algebras (also known as normal semi-group rings) in Section 2, we recall the notion  {\it conic modules}  from \cite{BrunsGubeladze} and  develop it for our purposes in Section 3, culminating in Brun's cell-decomposition of $\mathbb R^d$ induced by any full dimensional pointed rational polyhedral cone $C$ in $\mathbb R^d$  \cite{Bruns/arXiv:math/0408110}. Also in Sections 3 and 4, we  introduce the idea of a {\it open conic module} and its relationship with the cell decomposition,  and an instrumental idea in the following sections, and prove several other needed results, for example on the behavior of conic modules under restriction.   The technical heart of the paper is in Section 5, where we  use this cell decomposition to construct complexes of conic modules over $R$ and prove the crucial Acyclicity Lemma, Lemma  \ref{acyclicity}. The benefits of this work is reaped in Section 6, where  we show how to  induce explicit projective resolutions of simple modules over $\End_R(\mathbb A)$. Also in Section 6, we finish the proof of Theorem \ref{gldim} that $\End_R(\mathbb A)$ has finite global dimension, and deduce 
Corollary   \ref{Dmod} stating that  in prime characteristic,  rings of differential operators on toric varieties have finite global dimension. Finally in Section 7, we prove Theorem \ref{simplicial} characterizing the non-commutative resolution $\End_R(\mathbb A)$ as {crepant} if and only if $R$ is simplicial. 

\def\k{\mathsf{k}}
\def\div{\mathrm{div}}

\section{Toric Algebras}\label{Notation}
 Fix an arbitrary ground field $\k$. 
We begin with a recap of  toric algebras over $\k$, otherwise known as finitely generated normal semi-group algebras.
 See \cite{Fulton} for details. 
%%%Remark from Karen: Eleonore asked whether we assume that k is algebraically closed. I believe the answer is no. At least I see no place where this comes up. Do you have some specific step in mind? 

Let $M$ be a finitely generated free abelian group (also called a {\it lattice}), and let  $M_\mathbb{R}:= \mathbb{R}\otimes M$ be the associated real finite dimensional vector space. 
A {\bf rational polyhedral cone } $C$ in $M_{\mathbb R}$ is a  closed set consisting of all  non-negative real linear combinations of a finite set of elements in $M$.% (called {\bf generators} of the cone).

Given a rational polyhedral cone $C\subset M_\mathbb{R}$, the associated \textbf{toric algebra} $R = \k[C\cap M] $   is the $\k$-subalgebra of the Laurent ring $\k[M] = \k[x^m\, | \, m \in M]$ 
defined by 
$$R := \Span \{x^m  \, | \, 
m\in M\cap C\}.$$
 Alternatively, the ring $R$ can be defined simply as the 
 semi-group algebra of the semi-group $C\cap M$. 
The Laurent ring   itself is  the special case where $C = M_{\mathbb R}$.

The algebra $R$ is a domain   finitely generated over $ \k$,  naturally graded by the group $M$; its  dimension is  equal to the rank   of the lattice $M$ 
\cite[\S 1.2]{Fulton}. It is also well-known to be normal and Cohen-Macaulay \cite[p29-30]{Fulton}.

The algebra $R = \k[C\cap M]$  is the coordinate ring of the associated {\bf affine toric variety} $$X_{\sigma} = \, {\text{Spec}} \, R,  $$  where $\sigma \subset N_{\mathbb R} $ is the dual cone of  $C$ in the dual vector space $N_{\mathbb R} = M_{\mathbb R}^*$.  Typically in the literature, one begins with a finite rank lattice $N$, and 
a  rational polyhedral  cone $\sigma \subset N \otimes \mathbb R = N_{\mathbb R}$.  Our cone $C$ is the dual cone  to $\sigma$, namely
\begin{equation}\label{ring}
C = \{v \in M_{\mathbb R}\, | \, \langle v, w \rangle \geq 0\,\,\,{\text{for all }} w\in \sigma\},
\end{equation}
where $\langle v, w \rangle$ denotes the natural pairing between elements of $M_{\RR}$ and its dual space $N_{\mathbb R}$ and $M$ is the dual lattice $\Hom_{\mathbb Z}(N, \mathbb Z)$.

Without loss of generality, we can and will assume throughout that our cone $C$ is {\bf full dimensional} in $M_{\mathbb R}$. Indeed, we can  replace $M_{\mathbb R}$ with the span of $C$ if necessary, and the lattice $M$ by its intersection with this span.  
The assumption that $C$ is full-dimensional is equivalent to the assumption that the dual cone $\sigma$ is  {\bf pointed} (or {\bf strongly convex}), meaning that it contains no subspace of $N_{\mathbb R}$. In this case, it is easy that  $R$ and $\k[M]$ have the same fraction field,  so that the Krull dimension of $R$ is  equal to the rank of $ M$.

\subsection{Primitive Generators.} Fix a rational polyhedral cone $\sigma$ in $N_{\mathbb R} = N \otimes_{\mathbb Z} \mathbb R$.  Throughout this paper, the
 notation 
$$\Sigma_1 = : \{n_1, n_2, \dots, n_t\} \subset N$$ denotes a set of {\bf minimal primitive}  generators for $\sigma$.  
Minimality means that  none of the $n_i$ is redundant: removing any one will span a strictly smaller cone; equivalently in the case where $\sigma$ is pointed, this means that each $n_i$ spans an {\bf extremal ray} of $\sigma$ in $N_{\mathbb R}$. 
{\bf Primitivity}   means that  $n_i\in N$ but  $\varepsilon \,n_i \notin N$ for
$0 < \varepsilon  < 1$.   
If  $\sigma$ is pointed,  the set $\Sigma_1$ of minimal  primitive generators is uniquely determined.  

Equation (\ref{ring}) can be rewritten
\begin{equation}
\begin{aligned}
C &= \{v \in M_{\mathbb R}\, | \, \langle v, n_i \rangle \geq 0\,\,\,{\text{for all }} i = 1, \dots, t\} \\ &=
\label{intersection} \bigcap_{i=1}^t  \{v \in M_{\mathbb R}\, | \, \langle v, n_i \rangle \geq 0\},
\end{aligned}
\end{equation} 
which 
represents   $C$ as a finite intersection of closed half-spaces in $M_{\mathbb R}$. In particular, the {\bf facets}   (or codimension 1 subcones)  of $C$ are indexed  by the elements $n_i$ of $\Sigma_1$: each facet of $C$ is precisely the intersection  of $C$ with a supporting  hyperplane defined by the equation $\langle x, n_i \rangle = 0$ in 
$M_{\mathbb R}$. Abusing terminology slightly, we call the $n_i$ the {\bf  primitive inward-pointing  normals}   to the facets of $C$.

\subsection{Simplicial Toric Rings.}
Fix a toric ring $R = \k[C\cap M]$, where, without loss of generality, we assume that $C$ spans $M_{\mathbb R}$.  Recall that $R$ is said to be {\bf simplicial} if the primitive generators $\Sigma_1$  are {\it linearly independent.} Simplicial toric varieties are an especially nice class: for example, they have cyclic quotient singularities  \cite[2.2]{Fulton}.

Toric rings of dimension two are always simplicial, since a pointed full dimensional cone $\sigma$  in a two dimensional vector space has exactly two extremal rays. However, in higher dimension, the typical toric ring is not simplicial; see  Example \ref{ConeOverSquare}.

\section{Conic modules}
\label{sec: conic}

The monomials of a toric algebra $R$ are defined by intersecting a rational polyhedral cone $C$ with a lattice $M$.
If we translate the cone and intersect with $M$, we instead define the monomials of an $R$-module, called a \emph{conic module}. 

For the formal definition, fix a toric algebra $R$ corresponding to a full-dimensional rational polyhedral cone $C \subset M_{\RR}$. 

\begin{defn}\label{ConicModDef}
Fix a vector  $v\in M_\mathbb{R}$.  The {\bf conic module{\footnote{The name ``conic module" seems to have been coined in 
 \cite{BrunsGubeladze}, although these were studied earlier, for example, in \cite{Stanley, Dong}.}}
 defined by} $v$  is the $M$-graded $R$-submodule of the Laurent ring $\k[M]$ defined as follows:
\[ A_v := \Span \{x^m \mid m\in M\cap (C+v)\}.
\]\end{defn}

\begin{prop}\label{ConicModProp}
Let $A_v$ be a conic module for the toric algebra $R = \k[C\cap M]$. Then
\begin{enumerate}
\item
 $A_v$ is  torsion free and  rank one over $R$. 
\item $A_v$ is spanned by monomials $x^m$ where $m\in M$ is in the intersection of half spaces
$$
\bigcap_{n_i \in \Sigma_1} \{x\in M_{\mathbb R} \, | \,  \langle  x, n_i \rangle \geq   \lceil \langle  v, n_i \rangle \rceil \},
$$
where the notation $ \lceil \lambda \rceil$ denote the smallest integer $\geq \lambda$.
\item  For any two conic modules $A_v$ and $A_w$, the module $\Hom_{R}(A_v, A_w)$ is naturally isomorphic to the $M$-graded $R$-submodule of $\k[M]$
$$
\Span \{x^m\, | \, m+(C+v) \cap M \subset (C+w) \}.
$$
In particular, 
the only degree zero  $R$-module homomorphisms are  inclusions $A_v \subset A_w$ (composed with scalar multiplication by an element of $\k$).
\end{enumerate}
\end{prop}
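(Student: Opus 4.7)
My approach addresses the three parts in turn, each time exploiting the embedding $A_v \hookrightarrow \k[M]$ and the fact that $R$ and $\k[M]$ share a common fraction field $K$.

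For part (1), I first verify $A_v \ne 0$, i.e., that $(C+v) \cap M$ is nonempty. Since $C$ is full dimensional there is a lattice point $m_0 \in \mathrm{int}(C)$, and for $N$ large a ball of radius $N\varepsilon$ around $Nm_0 + v$ lies inside $C+v$ and is wide enough to contain a lattice point. Torsion-freeness is automatic since $A_v$ sits inside the domain $\k[M]$, and rank-oneness follows because $\k[M]$ itself has rank one over $R$ (they share the fraction field $K$), forcing every nonzero $R$-submodule of $\k[M]$ to have rank exactly one.

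Part (2) is a direct unwinding of definitions: a lattice point $m \in M$ lies in $C+v$ iff $m - v \in C$, which by the half-space presentation (2.2) of $C$ means $\langle m, n_i\rangle \geq \langle v, n_i\rangle$ for every $n_i \in \Sigma_1$. Since $m \in M$ and $n_i \in N = M^*$ pair to an integer, the right-hand side may be replaced by its ceiling without changing the set of solutions.

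For part (3), my plan is to show any $R$-linear $\varphi : A_v \to A_w$ is multiplication by an element of $\k[M]$, then read off the constraints from the $M$-gradings. Pick a monomial $x^{m_0} \in A_v$ and set $f := \varphi(x^{m_0}) \cdot x^{-m_0} \in \k[M]$. For an arbitrary $a \in A_v$, rank-oneness supplies $p,q \in R$ with $q \ne 0$ and $qa = p\,x^{m_0}$ in $\k[M]$; applying $\varphi$ and canceling $q$ in the domain $\k[M]$ forces $\varphi(a) = fa$. Writing $f = \sum c_m x^m$, the containment $f \cdot A_v \subset A_w$ together with the $M$-grading of $A_w$ forces each monomial $x^m$ with $c_m \ne 0$ to satisfy $m + (C+v)\cap M \subset C+w$, which is exactly the description claimed. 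The degree-zero statement is the special case $m=0$: this condition reduces to $A_v \subset A_w$ as subsets of $\k[M]$, and the map is then $\k$-scalar multiplication on that inclusion. The one subtlety, which I would write out most carefully, is that a priori $\varphi$ could be multiplication by an element of $K$ rather than of $\k[M]$; choosing $x^{m_0}$ to be a pure monomial rather than a general element of $A_v$ circumvents this, since then $f = \varphi(x^{m_0}) \cdot x^{-m_0}$ is manifestly a Laurent polynomial.
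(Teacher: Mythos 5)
Your proof is correct and follows essentially the same outline as the paper, with one stylistic difference in part (3). The paper gets the fact that $\Hom_R(A_v,A_w)\hookrightarrow \k[M]$ in one stroke by tensoring with the Laurent ring and using $\Hom_{\k[M]}(\k[M],\k[M])\cong \k[M]$, whereas you produce the multiplier $f=\varphi(x^{m_0})x^{-m_0}$ by hand and verify $\varphi$ is multiplication by $f$ using rank-oneness and the domain property of $\k[M]$; both routes correctly land you in $\k[M]$ rather than just the fraction field, and your observation that choosing a \emph{monomial} $x^{m_0}$ makes $f$ manifestly a Laurent polynomial is exactly the point the paper's tensor argument handles implicitly. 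Your part (1) is slightly more detailed (explicitly locating a lattice point in $C+v$ and arguing rank one via ``nonzero submodule of a rank-one module''), while the paper simply tensors $A_v$ with $\k[M]$; these are equivalent. Part (2) is identical. No gaps.
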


\begin{rem} In addition,  conic modules are always Cohen-Macaulay, hence reflexive. This is proved in  Corollary \ref{CM}.\end{rem}

\begin{rem}
It is tempting to suspect that $\Hom_{R}(A_v, A_w)$ might be the conic module $A_{w-v}$, or perhaps another conic module. This is false in general; in Example \ref{ConeOverSquare}, $\Hom_R(A_1,A_2)$ is not even Cohen-Macaulay. However, we show in Proposition \ref{simplicial} that $\Hom_{R}(A_v, A_w)$ is always conic in the special case that $R$ is simplicial. 
\end{rem}

\begin{proof}
For (1), note that  $A_v\subset \k[M]$ by definition, so   it is clearly a torsion-free $R$-module. Furthermore,  tensoring over $R$ with the Laurent ring $\k[M]$,  we have 
 $A_v\otimes_R \k[M] \cong \k[M],$ so $A_v$ is rank one. 
 
For (2),  observe that the conic module $A_v$ is the span of $x^m \in \k[M]$ where $m$ is in the set
$$
\begin{aligned}
M \cap (C+v)  &= 
\{m \in M \, | \, m - v \in C\} \\ &= 
\{m \in M \, | \, \langle  m - v , n_i \rangle \geq 0\,\,\,\, {\text{for all }} n_i \in \Sigma_1 \} \\ &=
 \{m \in M \, | \, \langle  m, n_i \rangle \geq   \langle  v, n_i \rangle  \,\,\,\, {\text{for all }} n_i \in \Sigma_1 \}\\ & = 
  \{m \in M \, | \, \langle  m, n_i \rangle \geq   \lceil \langle  v, n_i \rangle \rceil \,\,\,\, {\text{for all }} n_i \in \Sigma_1 \}\\
  &= M \cap \left(\bigcap_{n_i \in \Sigma_1} \{x\in M_{\mathbb R} \, | \,  \langle  x, n_i \rangle \geq   \lceil \langle  v, n_i \rangle \rceil \}\right) \ .
 \end{aligned}
  $$
  The penultimate equality here follows because $  \langle  m, n_i \rangle$ is an integer.
  
For (3), note that since $A_v$ and $A_w$ are finitely generated and $M$-graded, so is 
$\Hom_{R}(A_v, A_w)$. Tensoring with  the Laurent ring $\k[M]$, we see that $\Hom_R(A_v, A_w)$ is an $R$-submodule of $   \Hom_{\k[M]}(\k[M], \k[M]) \cong \k[M]$ so that every 
homogeneous $R$-module homomorphism $A_v\rightarrow A_w$ is multiplication by some monomial in $\k[M]$.
The statement follows. In particular,  the only degree-preserving maps are $\mu_{\lambda}: A_v\rightarrow A_w$ sending each $x^m$ to $\lambda x^m$, where $\lambda \in \k$. 
\end{proof}

\subsection{Open Conic Modules}
For any $v\in M_{\mathbb R}$, we define the associated  {\bf open conic module} to be the $M$-graded  $R$-submodule of $\k[M]$:
$$\mathring{A}_{v}
:= \Span\, \{x^m \, | \, m \in M \cap  (\mathring{C}+v)  \},
$$ where 
$ \mathring{C}+v$ denotes the {\it interior} of the cone $C+v.$

Open conic modules turn out to be conic modules of ``nearby" vectors in $M_{\mathbb R}$:

\begin{prop}\label{OpenConic1}
Fix a toric algebra $R$ corresponding to a full-dimensional cone $C$ in $M_{\mathbb R}$.  For every  open conic $R$ module $\mathring{A}_{v}$, there exists  $w \in M_{\mathbb R}$ such that 
$$\mathring{A}_{v} = A_w.$$
Explicitly, we can take $w = v+\varepsilon p$ where  $p$ is any vector in the {\it  interior}  of $C$ and $\varepsilon$ is a sufficiently small positive real number. 
\end{prop}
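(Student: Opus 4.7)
The plan is to read both modules off in terms of the primitive normals $n_i$ and compare them facet by facet. By definition, $\mathring{A}_v$ is spanned by the monomials $x^m$ such that $m - v$ lies in the interior of $C$, and using the description of $C$ from Equation (2.2) this is equivalent to the strict inequalities $\langle m, n_i\rangle > \langle v, n_i\rangle$ for every $n_i \in \Sigma_1$. Since $m \in M$ makes $\langle m, n_i\rangle$ an integer, the strict inequality is equivalent to $\langle m, n_i\rangle \geq \nu_i$, where $\nu_i$ denotes the smallest integer strictly greater than $\langle v, n_i\rangle$. Meanwhile, for $w = v + \varepsilon p$, Proposition \ref{ConicModProp}(2) says $A_w$ is spanned by the $x^m$ with $\langle m, n_i\rangle \geq \lceil \langle v, n_i\rangle + \varepsilon \langle p, n_i\rangle \rceil$. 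So the whole task reduces to choosing $\varepsilon$ so that $\lceil \langle v, n_i\rangle + \varepsilon \langle p, n_i\rangle \rceil = \nu_i$ for every $i$.

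Next I would exploit that $p$ lies in the interior of $C$, which by the half-space description means $\langle p, n_i\rangle > 0$ for each $n_i \in \Sigma_1$. For each $i$ let $\delta_i := \nu_i - \langle v, n_i\rangle > 0$ (note $\delta_i = 1$ if $\langle v, n_i\rangle \in \mathbb{Z}$, and $\delta_i = \lceil \langle v, n_i\rangle\rceil - \langle v, n_i\rangle \in (0,1)$ otherwise). Because $\Sigma_1$ is finite, one may choose
\[
0 < \varepsilon < \min_{i=1,\dots,t} \frac{\delta_i}{\langle p, n_i\rangle}.
\]
For any such $\varepsilon$, the real number $\langle v, n_i\rangle + \varepsilon\langle p, n_i\rangle$ lies in the half-open interval $(\langle v, n_i\rangle, \nu_i]$, so its ceiling is precisely $\nu_i$, as desired.

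Substituting back yields the equality of spanning sets $M \cap (\mathring{C} + v) = M \cap (C + w)$ and hence $\mathring{A}_v = A_w$. The only subtle point is the case distinction according to whether $\langle v, n_i\rangle$ is an integer—otherwise one might naively try $w = v$ itself, which fails exactly when some $\langle v, n_i\rangle \in \mathbb{Z}$, since then the boundary of $C + v$ picks up lattice points that do not belong to the open module. Perturbing by $\varepsilon p$ in the interior direction is the minimal adjustment that shifts each such boundary inequality strictly past the next integer while not overshooting any of the non-integer values.
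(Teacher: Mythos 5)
Your proof is correct and follows essentially the same route as the paper: identify both sides monomial-by-monomial via the normals $n_i$, use that $p$ interior means $\langle p, n_i\rangle > 0$, and observe that integrality of $\langle m, n_i\rangle$ lets a sufficiently small perturbation turn strict inequalities into non-strict ones without changing the lattice points captured. The paper states the equivalence $\langle m-v,n_i\rangle>0 \Leftrightarrow \langle m-(v+\varepsilon p),n_i\rangle\ge 0$ and leaves the uniformity of $\varepsilon$ implicit; you make the uniform bound $\varepsilon < \min_i \delta_i/\langle p,n_i\rangle$ explicit, which is a welcome clarification but not a different argument.
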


\begin{proof}[Proof of Proposition \ref{OpenConic1}]
Fix any 
 $p \in M_{\mathbb R}$ in the interior of $C$. This means that $\langle p, n_i\rangle > 0$ for all $n_i \in \Sigma_1$. So for any $m\in M$, we have
$$
\langle m-v, n_i\rangle > 0 \,\,\,\,\,{\text{if and only if }} \,\,\,\, \langle m  -(v+ \varepsilon p), n_i\rangle \geq 0
$$
for sufficiently small positive $\varepsilon$. 
This exactly says that $\mathring{A_{v}} = A_{v+\varepsilon p}$.
\end{proof}

\section{Chambers of Constancy and Cell Decomposition}  \label{Notation3} 

Through this section, $C$ denotes  a full-dimensional rational polyhedral cone $C$ in $M_{\mathbb R} = M\otimes \RR$.  As usual,  $R = \k[C\cap M]$ denotes  the corresponding toric  ring and 
$\Sigma_1$ the set of primitive inward-pointing normals to its facets. 

Different vectors $v \in M_{\mathbb R}$ can determine the same conic module over  $R$. 
This leads to the following partition of $M_{\mathbb R}$ into equivalence classes of vectors determining the same conic module:
\begin{defn} 
The {\bf chamber of constancy } $\Delta \subset M_{\mathbb R}$  containing $v \in M_{\mathbb R}$ is the set of 
all $w\in M_{\mathbb R}$ such that $A_v = A_w$. Equivalently, $w$ and $v$ belong to the same chamber of constancy if and only if $(C+v) \cap M = (C+w)\cap M$.
\end{defn}

\noindent
\begin{notation} 
Because two vectors determine the same conic module if and only if they belong to the same chamber $\Delta$, it is natural to denote a conic module $A_{\Delta}$. 
\end{notation}

\begin{prop}\label{Equations} 
The  chamber of constancy containing $v$ is the set 
\begin{equation}\label{ChamberEquations}
\begin{aligned}
\Delta & = \{x \in M_{\mathbb R}\,\, | \,\, \lceil \langle v, n_i \rangle \rceil  - 1 <    \langle x, n_i \rangle \leq  \lceil \langle v, n_i   \rangle\rceil  \,\,\,\, {\text{for all }} \,\, n_i \in \Sigma_1\}\\
& =  \bigcap_{n_i \in \Sigma_1} \{x \in M_{\mathbb R}\,\, | \,\, \lceil \langle v, n_i \rangle \rceil  - 1 <   \langle x, n_i \rangle  \leq  \lceil \langle v, n_i   \rangle\rceil \}
\end{aligned}
\end{equation}
 where  
 $n_1, \dots, n_t$ runs through the set $\Sigma_1$  of primitive inward-pointing normal vectors to the facets of $C$.
 \end{prop}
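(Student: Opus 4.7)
The plan is to reduce the proposition to a statement about ceilings of the pairings $\langle v, n_i \rangle$, using the explicit description of $A_v$ from Proposition \ref{ConicModProp}(2). First, observe that the right-hand side of (\ref{ChamberEquations}) is precisely the set of $x \in M_{\mathbb R}$ with $\lceil \langle x, n_i \rangle \rceil = \lceil \langle v, n_i \rangle \rceil$ for every $n_i \in \Sigma_1$, since the inequality $k-1 < y \leq k$ on a real $y$ is equivalent to $\lceil y \rceil = k$. Thus the proposition reduces to proving the equivalence
\[
A_v = A_w \iff \lceil \langle v, n_i \rangle \rceil = \lceil \langle w, n_i \rangle \rceil \text{ for every } n_i \in \Sigma_1.
\]

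The ``$\Leftarrow$'' direction is immediate from Proposition \ref{ConicModProp}(2), since under this hypothesis $A_v$ and $A_w$ are defined by identical systems of inequalities. For ``$\Rightarrow$'', I would show that each integer $\lceil \langle v, n_i \rangle \rceil$ can be recovered from $A_v$ alone, as the minimum value of $\langle m, n_i \rangle$ over monomials $x^m$ in the support of $A_v$. Proposition \ref{ConicModProp}(2) already bounds this minimum below by $\lceil \langle v, n_i \rangle \rceil$; what remains is to produce some $m \in M \cap (C+v)$ attaining equality.

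To construct such an $m$, I would first exploit primitivity of $n_i$: the map $M \to \mathbb{Z}$, $m \mapsto \langle m, n_i \rangle$, is then surjective, so some $m_0 \in M$ satisfies $\langle m_0, n_i \rangle = \lceil \langle v, n_i \rangle \rceil$. The remaining inequalities I would arrange by shifting $m_0$ by a large multiple of a lattice point $p_i$ in the relative interior of the facet $F_i := C \cap \{\langle \cdot , n_i \rangle = 0\}$. Being in the relative interior of $F_i$ forces $\langle p_i, n_i \rangle = 0$ and $\langle p_i, n_j \rangle > 0$ for every $j \neq i$, so $m_0 + k p_i$ preserves $\langle \cdot , n_i \rangle = \lceil \langle v, n_i \rangle \rceil$ while sending all other pairings to $+\infty$; for $k \gg 0$ the point lies in $M \cap (C+v)$ as required. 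The main obstacle I anticipate is justifying the existence of such a $p_i$: since $F_i$ is a full-dimensional rational cone inside the hyperplane $\ker n_i \subset M_{\mathbb R}$, which meets $M$ in a corank-one sublattice (again by primitivity of $n_i$), a rational point of the relative interior of $F_i$ is available (for instance, the average of its primitive generators) and can be cleared of denominators to land in $M$; I would isolate this construction as a short preliminary lemma so it does not interrupt the main argument.
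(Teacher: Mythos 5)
Your proof is correct and follows essentially the same strategy as the paper's: reduce the statement to the claim that $A_v = A_w$ if and only if $\lceil\langle v,n_i\rangle\rceil = \lceil\langle w,n_i\rangle\rceil$ for all $i$, get the easy direction from Proposition \ref{ConicModProp}(2), and prove the hard direction by exhibiting, for each $i$, a monomial $x^m$ in the support of $A_v$ attaining $\langle m,n_i\rangle = \lceil\langle v,n_i\rangle\rceil$. Where you differ is in the construction of that witness. The paper picks a lattice point on the hyperplane $\mathcal{H} = \{\langle x,n_1\rangle = \lceil\langle v,n_1\rangle\rceil\}$ and then asserts rather tersely that the convex region $\bigcap_i\{\langle x,n_i\rangle \geq \lceil\langle v,n_i\rangle\rceil\}$ meets $M\cap\mathcal{H}$ because it meets the rational hyperplane $\mathcal H$ --- an inference that, as stated, needs the additional observation that this region's intersection with $\mathcal H$ is unbounded with full-dimensional rational recession cone. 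Your construction is more explicit and sidesteps this: you use primitivity of $n_i$ to hit the correct level of $\langle\cdot,n_i\rangle$, and then push far into the cone by adding a large multiple of a lattice point in the relative interior of the facet $F_i$ (which has $\langle\cdot,n_i\rangle = 0$ but $\langle\cdot,n_j\rangle > 0$ for $j\neq i$, since $\operatorname{relint}(F_i)$ is disjoint from every other facet). This makes the lattice-point existence entirely constructive, which is a genuine (if modest) improvement in rigor; the only lemma you would need to isolate, as you note, is the existence of a lattice point in $\operatorname{relint}(F_i)$, and the sum of the primitive extremal-ray generators of $F_i$ works.
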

 
Before proving this, we note the following immediate corollary. 
 
 \begin{coro}\label{polyhdral}  Fix a  full-dimensional rational polyhedral cone $C \subset M_{\mathbb R}$. The corresponding chambers of constancy  decompose
 $ M_{\mathbb R}$ into disjoint
 locally closed polyhedral regions;  these chambers are  bounded if $C$ is pointed. \end{coro}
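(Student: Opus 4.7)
The plan is to read everything off of the explicit description of chambers given in Proposition \ref{Equations}, namely
\[
\Delta \;=\; \bigcap_{n_i \in \Sigma_1} \bigl\{x \in M_{\mathbb R} \,\big|\, \lceil \langle v, n_i \rangle \rceil - 1 < \langle x, n_i \rangle \leq \lceil \langle v, n_i \rangle \rceil \bigr\}.
\]
First, disjointness is essentially built into the definition: the chambers of constancy are the equivalence classes of the relation ``$A_v = A_w$'' on $M_{\mathbb R}$, so they automatically partition $M_{\mathbb R}$ into disjoint subsets. Second, the above description exhibits $\Delta$ as a finite intersection of ``half-open slabs'' bounded by rational affine hyperplanes (one strict inequality and one non-strict, on each side). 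Any such set is the intersection of a (closed) rational polyhedron with an open rational polyhedron, hence is a locally closed rational polyhedral region in the sense claimed.

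For boundedness, I would use the hypothesis that $C$ is pointed. Since $C \subset M_{\mathbb R}$ is cut out by the inequalities $\langle x, n_i \rangle \geq 0$ for $n_i \in \Sigma_1$, pointedness of $C$ (i.e.\ $C$ contains no line through the origin) is equivalent to the $n_i$'s spanning the dual space $N_{\mathbb R}$, since otherwise any nonzero $x$ annihilated by all $n_i$ would give a line $\RR x \subset C$. Equivalently, via the duality $C = \sigma^\vee$ recalled in Section \ref{Notation}, $C$ being pointed is the same as $\sigma$ being full-dimensional in $N_{\mathbb R}$, which again forces $\Sigma_1$ to span $N_{\mathbb R}$.

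Granting this, I would pick $d = \dim M_{\mathbb R}$ linearly independent elements $n_{i_1}, \dots, n_{i_d}$ from $\Sigma_1$. The evaluation map
\[
\Phi \colon M_{\mathbb R} \longrightarrow \mathbb R^d, \qquad \Phi(x) = \bigl(\langle x, n_{i_1}\rangle, \dots, \langle x, n_{i_d}\rangle\bigr),
\]
is then a linear isomorphism. Each coordinate $\langle x, n_{i_j}\rangle$ is constrained by $\Delta$ to lie in an interval of length at most $1$, so $\Phi(\Delta)$ is contained in a bounded box of $\mathbb R^d$. Pulling back under the homeomorphism $\Phi$ gives that $\Delta$ itself is bounded.

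There is no real obstacle here; the only subtlety worth flagging is the dual statement ``$C$ pointed $\iff \Sigma_1$ spans $N_{\mathbb R}$,'' which is precisely where the hypothesis enters. If $C$ is not pointed, then $\Sigma_1$ is contained in a proper subspace of $N_{\mathbb R}$, and the chambers become unbounded in the directions orthogonal to $\Sigma_1$, in which case the corollary's final clause correctly fails.
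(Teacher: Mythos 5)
Your proof is correct and follows essentially the same route as the paper's: read off from Proposition~\ref{Equations} that each chamber is a finite intersection of half-open slabs (hence locally closed and polyhedral), and observe that $C$ pointed forces $\Sigma_1$ to span $N_{\mathbb R}$, which bounds each chamber. You simply spell out the last step more explicitly via the linear isomorphism $\Phi$ and the bounded box, where the paper states it in one line.
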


 \begin{proof}[Proof of Corollary]
 Proposition  \ref{Equations} expresses a chamber $\Delta$ as an intersection of finitely many half-open strips formed by the intersection of a  closed half space and a parallel open half space. Thus it is locally closed and polyhedral. If $C$ is pointed, then its dual $\sigma$ is full-dimensional, so $\Sigma_1$ spans $N_{\mathbb R}$, making $\Delta$ bounded. 
 \end{proof}

 \begin{proof}[Proof of Proposition  \ref{Equations}]
Fix $v \in M_{\mathbb R}$.  The Lemma is equivalent to the claim that $A_w = A_v$ if and only if the two lists of integers
$$ \lceil \langle  v, n_1 \rangle \rceil ,  \lceil \langle  v, n_2 \rangle \rceil, \dots,  \lceil \langle  v, n_t \rangle \rceil
$$ and
$$ \lceil \langle  w, n_1 \rangle \rceil ,  \lceil \langle  w, n_2 \rangle \rceil, \dots,  \lceil \langle  w, n_t \rangle \rceil
$$
are the same. Clearly, if $\lceil \langle  v, n_i \rangle \rceil =   \lceil \langle  w, n_i \rangle \rceil$ for all $n_i \in \Sigma_1$, then Proposition
 \ref{ConicModProp}  (2) ensures that  $A_v = A_w$.
It remains to prove the converse.

Let us assume, without loss of generality,  that $\lceil \langle  v, n_1 \rangle \rceil <   \lceil \langle  w, n_1 \rangle \rceil$. We need to show that $A_v\neq A_w$.
For this, we will find a monomial $x^m\in A_v$ but not in $A_w$.

Pick any $m \in M \cap  \mathcal H$, where   $\mathcal H$ is the hyperplane in $M_{\mathbb R}$ 
 whose equation is $\langle  x, n_1\rangle = \lceil\langle  v, n_1\rangle \rceil$. Note that  $m$ is {\it not} in the closed half space
  $$
 \{x\in  M_{\mathbb R} \, |  \langle  x, n_1\rangle \geq  \lceil\langle  w, n_1\rangle \rceil \},
  $$ so that $m \notin C+w$ by Proposition 
   \ref{ConicModProp}  (2).   In particular, $x^m \notin A_w$ for any $m \in M \cap  \mathcal H.$ 
   On the other hand,  again by Proposition  \ref{ConicModProp},   the monomials in $A_v$ are those $m\in M$ such that 
   $$
  m\in  \bigcap_{i=1}^t \{x\in M_{\mathbb R} \, | \,  \langle  x, n_i \rangle \geq   \lceil \langle  v, n_i \rangle \rceil \},   $$ 
  a convex subset of $M_{\mathbb R}$ which has non-empty intersection with rational hyperplane $ \mathcal H$ and hence with the lattice $M\cap \mathcal H$.
     Therefore,  we can find  $m\in (C+v)\cap  M \cap \mathcal H$. This $m$ satisfies
   $x^m \in A_v\setminus A_w$.
\end{proof}

 \begin{rem}\label{ConeList} 
 The proof of Proposition \ref{Equations} implies that a  conic module $A_v$ is uniquely described by the list of integers
 \begin{equation}\label{list}
(\lceil  \langle v, n_1 \rangle \rceil, \lceil  \langle v, n_2 \rangle \rceil, \dots, \lceil  \langle v, n_t \rangle \rceil)  \, \in \mathbb Z^{\Sigma_1}
 \end{equation}
 where the $n_i\in \Sigma_1$ range through the primitive linear forms defining the facets of $C$. 
 This can be understood in terms of the group of torus-invariant Weil divisors on the toric variety $X_{\sigma} = \Spec R$, a  free group generated by the irreducible codimension one toric subvarieties  determined by each of of the facets of $C$, hence is also isomorphic to $ \mathbb Z^{\Sigma_1}$.
  The conic module $A_v$  is the module of global section of the reflexive sheaf 
  $\mathcal O_{X_{\sigma}}(-D)$, where $D$ is the divisor
  $$\lceil\langle v, n_1\rangle\rceil D_1+ \lceil\langle v, n_2\rangle\rceil D_2 +  \dots + \lceil\langle v, n_t\rangle\rceil D_{t} \ ,$$ 
where $D_i$ is the divisor of the facet indexed by $n_i \in \Sigma_1$.
The reader is cautioned, however, that the conic modules are a {\it strict} subset of the set of all reflexive $M$-graded submodules
  of the fraction field of $\k[M]$ in general, so not every list of integers corresponds to a conic module.{\footnote{In general not even all rank 1 maximal Cohen-Macaulay modules are conic, see \cite{Baetica}.}}
\end{rem}

\subsection{CW Decomposition of $M_{\mathbb R}$.}

The cone $C$ in $M_{\mathbb R}$ determines a CW decomposition of $M_{\mathbb R}$ whose maximal dimensional cells are the interiors of the chambers of constancy. More precisely: 

\begin{prop}\label{CellDecomp}  \cite{Bruns/arXiv:math/0408110} Fix a full dimensional  rational polyhedral cone $C$ in $M_{\mathbb R}.$ 
Each of its chambers of constancy in $M_{\mathbb R}$  is  a disjoint union of open polyhedral  cells (of different dimensions).  Together,   ranging over all chambers, these cells define a CW decomposition of $M_{\mathbb R}$.
 \end{prop}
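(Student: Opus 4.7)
The plan is to realize the decomposition as the one induced by a locally finite affine hyperplane arrangement, and then to observe that each chamber of constancy is a union of the resulting cells. Let $\mathcal{H}$ denote the arrangement in $M_\RR$ consisting of all hyperplanes
\[
H_{i,k} := \{x \in M_\RR : \langle x, n_i \rangle = k\}, \qquad n_i \in \Sigma_1,\ k \in \mathbb{Z}.
\]
Since $\Sigma_1$ is finite and each linear functional $\langle \cdot, n_i \rangle$ is bounded on bounded sets, only finitely many hyperplanes from $\mathcal{H}$ meet any bounded region of $M_\RR$, so $\mathcal{H}$ is locally finite.

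For each $x \in M_\RR$ I would define a cell $F(x)$ as follows: for each $n_i \in \Sigma_1$, set $k_i := \lceil \langle x, n_i \rangle \rceil$ and impose on $y \in F(x)$ the equation $\langle y, n_i \rangle = k_i$ when $\langle x, n_i \rangle = k_i$, and otherwise the strict inequality $k_i - 1 < \langle y, n_i \rangle < k_i$. By local finiteness of $\mathcal{H}$, the set $F(x)$ is the relative interior of a convex polyhedron defined by finitely many affine conditions, hence an open polyhedral cell, and the family $\{F(x)\}_{x \in M_\RR}$ partitions $M_\RR$. I would then promote this partition to a CW decomposition by verifying that $\overline{F(x)}$ is the closed polyhedron obtained by weakening each strict inequality to a weak one, that it is a finite union of lower-dimensional cells of the form $F(x')$ (obtained by upgrading some strict inequality to an equality), and that closure-finiteness together with the weak topology follow from the local finiteness of $\mathcal{H}$.

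Finally, Proposition \ref{Equations} describes each chamber of constancy $\Delta$ by half-open conditions $a_i - 1 < \langle x, n_i \rangle \leq a_i$, one for each $n_i \in \Sigma_1$. The integer $k_i$ and the choice of equality-versus-strict-inequality are constant along each cell $F(x)$, so $F(x) \subseteq \Delta$ holds exactly when $k_i = a_i$ for every $i$, and $F(x) \cap \Delta = \varnothing$ otherwise. Hence each chamber is a disjoint union of cells, and together the cells exhaust $M_\RR$ as $\Delta$ ranges over all chambers. The main obstacle is the technical verification that $\{F(x)\}$ genuinely satisfies the CW axioms (characteristic maps, closure-finiteness, weak topology) rather than being just a set-theoretic partition; this reduces to the standard fact that a locally finite affine hyperplane arrangement in a real vector space induces a regular CW decomposition whose closed cells are convex polyhedra, after which the statement about chambers is immediate.
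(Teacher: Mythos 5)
Your proposal takes essentially the same approach as the paper. Your cells $F(x)$ coincide exactly with the paper's cells $\tau_\Omega$ (defined in equation \eqref{CellFormula}): both are cut out by choosing, for each $n_i \in \Sigma_1$, either the open strip $d_i - 1 < \langle x, n_i\rangle < d_i$ or the equality $\langle x, n_i\rangle = d_i$. The only genuine difference is presentational: you package the construction as the cell decomposition induced by the locally finite affine hyperplane arrangement $\mathcal{H} = \{H_{i,k}\}$, which lets you cite a standard fact about such arrangements to establish the CW axioms, whereas the paper describes the cells directly and simply asserts that checking the axioms is straightforward (citing Munkres). Your framing makes the local-finiteness point explicit, which is a reasonable way to justify closure-finiteness and the weak topology condition, but it does not change the substance of the decomposition or the verification that each chamber of constancy is a disjoint union of these cells.
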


 \begin{rem}
 We caution the reader that a chamber of constancy $\Delta$ need not have cells of every dimension $\leq \dim M_{\mathbb R}$. See Section \ref{Sec7}.
 \end{rem}
 
\begin{proof}[Proof of Proposition \ref{CellDecomp}]
Fix a chamber of constancy $\Delta$ containing a vector $v \in M_{\mathbb R}$. We will explicitly describe $\Delta$ as a 
finite union of open polyhedral cells. 

There is exactly one open cell of codimension zero, namely the interior of $\Delta$.
This is the set of vectors in $v \in M_{\mathbb R}$ satisfying 
\begin{equation}\label{inequalities}
 \lceil \langle v, n_i \rangle \rceil - 1  <  \langle x, n_i \rangle <  \lceil  \langle v, n_i \rangle \rceil
 \end{equation} 
for all $n_i \in \Sigma_1$, the set of primitive inward-pointing normals to the facets of $C$.
We can think of these inequalities as the complete set of inequalities defining $\Delta$  as in Proposition \ref{Equations} but in which {\it all inequalities are strict.}

The open cells of codimension 1 are the interiors of the {\it facets} of $\Delta$. These cells are defined as  the points of $M_{\mathbb R}$ satisfying the same set of  strict linear  inequalities as above in (\ref{inequalities}), but with exactly  one of the linear inequalities replaced by an 
{\it equality} of the form  $\langle x, n_i \rangle = \lceil \langle v, n_i \rangle \rceil$ for some $n_i \in \Sigma_1$. 

The open cells of codimension 2 are similarly defined by the same set of strict  linear inequalities  (\ref{inequalities}) but with  two equalities. These cells are open polytopes in the codimension two linear spaces
 in $M_{\mathbb R}$ obtained by intersecting  two supporting hyperplanes of $\Delta$.  In a similar way, each cell of codimension $i$ of $\Delta$ is an open polytope in a codimension $i$ linear space of $M_{\mathbb R}$ obtained by intersecting $i$ supporting hyperplanes of  $\Delta$.
 
 Summarizing formally, we can describe each  open cell of $\Delta$ as follows. Pick any $v\in \Delta$ and set $d_j = \lceil \langle v, n_j \rangle \rceil$ for each $n_j \in \Sigma_1$. Then 
\begin{equation}
\label{CellFormula}
 \tau_{\Omega}  :=  \left\{x\in M_{\mathbb R}\, \,\,\, \Large{|} \, \, \,\,\
	\begin{array}{ll}
		 d_j - 1 < \langle x, n_j \rangle < d_j  & \mbox{for }  n_j\in \Omega,  \\
		\langle x, n_j \rangle = d_j  & \mbox{for } n_j \not\in \Omega 
	\end{array}
\right\}.
\end{equation}
where $\Omega$ is some subset of the set $\Sigma_1$ of primitive inward-pointing normals of $C$.

The space $M_{\mathbb R}$ is clearly the disjoint union of these polyhedral cells, as we range over all cells of all  chambers. 
It is straightforward to check that this cell decomposition satisfies the definition of a CW complex. (See, for example, \cite[\S 38]{Munkres}.)	
\end{proof}

\begin{rem}\label{CompareClosed}
The cell decomposition of $\Delta$  is not a CW complex because some boundary cells are missing. 
However, 
the closure $\overline{\Delta} $ of $\Delta$ is a CW subcomplex of  $M_{\mathbb R}$  in an obvious  way.  \end{rem}

The  CW decomposition  of $M_\mathbb{R}$ can be clarified using open conic modules:
\begin{coro}\label{OpenCells} With notation as above, 
two vectors  $v,w\in M_{\mathbb{R}}$ lie in the same open cell of the CW decomposition  of $M_\mathbb{R}$  if and only if 
\[ A_v  = A_w \,\,\,\text{ and }\, \,\,\,\mathring{A}_v = \mathring{A}_w.\]

Moreover,  for $v$ and $w$ lying in the same chamber of constancy $\Delta$ we have 
$$\mathring{A_{v}} \subset \mathring{A_{w}}$$
if and only if the open cell  containing $v$ is contained in the boundary of the open cell  containing $w$.
 \end{coro}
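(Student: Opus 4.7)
The plan is to translate both claims into purely combinatorial statements about the integer lists $(\lceil \langle v, n_j \rangle \rceil)_{n_j \in \Sigma_1}$ that, by Remark \ref{ConeList}, determine (open) conic modules. The key preliminary step is to compute this list for $\mathring{A}_v$. Using Proposition \ref{OpenConic1}, I write $\mathring{A}_v = A_{v+\varepsilon p}$ for $p$ in the interior of $C$ and $\varepsilon > 0$ sufficiently small. Since $\langle p, n_j \rangle > 0$ for every primitive normal $n_j$, the ceiling $\lceil \langle v + \varepsilon p, n_j \rangle \rceil$ equals $\lceil \langle v, n_j \rangle \rceil$ when $\langle v, n_j \rangle \notin \ZZ$, and equals $\langle v, n_j \rangle + 1$ when $\langle v, n_j \rangle \in \ZZ$.

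Next I compare this with the cell description from the proof of Proposition \ref{CellDecomp}. Fix a chamber $\Delta$ and set $d_j := \lceil \langle v, n_j \rangle \rceil$ for any $v \in \Delta$. Examining (\ref{CellFormula}), the open cell $\tau_\Omega \subset \Delta$ containing $v$ is determined by $\Omega = \{n_j \in \Sigma_1 \mid \langle v, n_j \rangle \notin \ZZ\}$. Substituting this into the computation above, $\mathring{A}_v$ is encoded by the list $(e_1, \dots, e_t)$ with $e_j = d_j$ if $n_j \in \Omega$ and $e_j = d_j + 1$ otherwise. In particular, the pair $(A_v, \mathring{A}_v)$ determines and is determined by the pair $(\Delta, \Omega)$, i.e., by the open cell containing $v$.

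Both claims then reduce to bookkeeping. For the first: $A_v = A_w$ is by definition equivalent to $v, w$ lying in the same chamber (the $d_j$'s agree), and the extra condition $\mathring{A}_v = \mathring{A}_w$ forces the subsets $\Omega$ to agree, placing $v, w$ in the same open cell; the converse is immediate. For the second: given $v, w \in \Delta$ in cells $\tau_\Omega$ and $\tau_{\Omega'}$, Proposition \ref{ConicModProp}(2) translates $\mathring{A}_v \subset \mathring{A}_w$ into $e_j \geq e'_j$ for every $j$, and a direct case check using $e_j \in \{d_j, d_j + 1\}$ shows this is equivalent to $\Omega \subset \Omega'$. On the geometric side, inspecting (\ref{CellFormula}) and relaxing strict inequalities to weak ones yields $\tau_\Omega \subset \overline{\tau_{\Omega'}}$ iff $\Omega \subset \Omega'$, which is precisely the asserted boundary relation.

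I do not anticipate a serious obstacle: the only delicate point is tracking when the ceiling jumps under infinitesimal perturbation, which happens exactly when $\langle v, n_j \rangle$ is already an integer. Everything else is matching two parameterizations (conic modules by integer lists, cells by subsets of $\Sigma_1$) that both derive from the set of primitive normals $\Sigma_1$.
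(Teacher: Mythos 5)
Your proof is correct and follows essentially the same route as the paper: reduce to the perturbation $\mathring{A}_v = A_{v+\varepsilon p}$ via Proposition \ref{OpenConic1}, observe that the ceiling $\lceil \langle v+\varepsilon p, n_j\rangle\rceil$ jumps by one exactly when $\langle v, n_j\rangle\in\ZZ$, and then match the resulting integer lists against the cell description (\ref{CellFormula}). Your explicit bookkeeping with the lists $(e_j)$ is a slightly more systematic packaging of the same comparison the paper carries out verbally (the paper even has the same slip of writing ``$p$ in the interior of $\Delta$'' where it means the interior of $C$), so there is no substantive difference.
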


\begin{proof}  Fix  $v, w \in \Delta$. It suffices to show that $v$ and $w$ are in the 
same open cell of $\Delta$ if and  only if $\mathring{A}_v = \mathring{A}_w.$

Fix $p$ in the interior of $\Delta$. According to Proposition \ref{OpenConic1},
$\mathring{A}_v = \mathring{A}_w$  if and only only if
\begin{equation}\label{eq5}
\lceil \langle v+\varepsilon p, n_j \rangle \rceil = \lceil \langle w+\varepsilon p, n_j \rangle \rceil 
\end{equation}
for all $n_j \in \Sigma_1$ and sufficiently small $\varepsilon$.
But (\ref{eq5}) holds if and only if 
$$
\langle v, n_j \rangle \in \mathbb Z  \Longleftrightarrow
\langle w, n_j \rangle \in \mathbb Z 
$$
in which case both values are $d_j = \lceil \langle v, n_j \rangle \rceil  = \lceil \langle w, n_j \rangle \rceil $.
But this means exactly that both $v$ and $w$ lie in the open cell
\begin{equation}\label{eq7}
  \left\{x\in M_{\mathbb R}\, \,\,\, \large{|} \, \, \,\,\
	\begin{array}{ll}
		 d_j - 1 < \langle x, n_j \rangle < d_j  & \mbox{for }  j \in \Omega \\
		\langle x, n_j \rangle = d_j  & \mbox{for } j \not\in \Omega
	\end{array}
\right\}.
\end{equation}
where $\Omega$ is the set of indices for which $ \langle v, n_j \rangle$ and $ \langle w, n_j \rangle$
are {\it not}  integers.

For the second statement,  we 
 observe that $\tau$ is in the boundary of $\tau'$ precisely when $\tau$ is the intersection of $ \tau'$ with some supporting hyperplanes for $\Delta$ defined by  linear equations $\langle x, n_k \rangle = \lceil \langle v, n_k\rangle \rceil.$ So clearly if $m\in M$ satisfies all the defining (in)equalities  to be in the interior of $C+v$ (where $v\in \tau$), then it also satisfies the less restrictive conditions to be in the interior of  $C+w$ (where $w\in \tau'$). This says exactly that $  \mathring A_v \subset   \mathring A_w$.
\end{proof}

\noindent
\begin{notation} In  light of Corollary \ref{OpenCells}, it is reasonable to denote an open conic module   $\mathring A_{v}$ by  $\mathring A_{\tau}$ 
where $\tau$ is the open cell containing $v$. However, it can happen that  $\mathring A_{\tau} =  \mathring A_{\tau'}$  for distinct cells $\tau$ and $\tau'$, so we will  
use this notation only when considering open conic modules  $\mathring A_{v}$ for $v$ in a fixed chamber of constancy $\Delta$. 
\end{notation}

\subsection{Isomorphism Classes of Conic Modules.} Isomorphic conic modules can be understood in terms of the chamber decomposition of $M_{\mathbb R}$: 

\begin{prop}\label{isom}
Two conic $R$-modules $A_{\Delta}$ and $A_{\Delta'}$ are isomorphic if and only if there is an $m\in M$ such that $\Delta = \Delta' +m$.
\end{prop}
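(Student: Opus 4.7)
The plan is to work inside the Laurent ring $\k[M]$, in which every conic module naturally embeds. The key algebraic ingredient is that after tensoring over $R$ with $\k[M]$, each conic module $A_v$ becomes the free rank-one module $\k[M]$; this lets me convert any $R$-module homomorphism between conic modules into multiplication by an element of $\k[M]$. I will treat the two directions separately.

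The forward (``if'') direction is direct. Assuming $\Delta = \Delta' + m$ with $m \in M$, I pick $v \in \Delta'$, so that $v+m \in \Delta$. Multiplication by $x^m$ on $\k[M]$ is $R$-linear, and by the half-space description of conic modules in Proposition \ref{ConicModProp}(2), it identifies $A_{\Delta'} = A_v$ bijectively with $A_{v+m} = A_\Delta$, since adding $m$ sends $(C+v)\cap M$ to $(C+v+m)\cap M$. Multiplication by $x^{-m}$ is a two-sided inverse, giving the required isomorphism.

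For the reverse (``only if'') direction, suppose $\phi: A_{\Delta'} \to A_\Delta$ is an $R$-module isomorphism, which a priori need not be graded. Tensoring with $\k[M]$ produces a $\k[M]$-module automorphism of $\k[M]$, which must be multiplication by a unit of $\k[M]$; the units of $\k[M]$ are exactly $\k^\times \cdot \{x^m : m \in M\}$. Because both $A_{\Delta'}$ and $A_\Delta$ are torsion-free and embed into $\k[M]$ compatibly with this localization, the original $\phi$ must also be given by multiplication by some $\lambda x^m$. Consequently $x^m \cdot A_{\Delta'} = A_\Delta$ as submodules of $\k[M]$.

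It remains to translate this equality of submodules into the chamber identity $\Delta = \Delta' + m$. Directly from Definition \ref{ConicModDef}, multiplication by $x^m$ sends $A_v$ to $A_{v+m}$ for any $v \in M_{\mathbb R}$, so picking $v \in \Delta'$ gives $A_{v+m} = A_\Delta$ and hence $v+m \in \Delta$; letting $v$ range over $\Delta'$ yields $\Delta' + m \subseteq \Delta$, and the symmetric argument applied to $\phi^{-1}$ supplies the reverse inclusion. The main technical point I expect is the extension-of-scalars step, which is what upgrades a possibly ungraded $R$-module isomorphism into multiplication by a monomial unit, thereby converting the algebraic data of $\phi$ into the combinatorial translation by $m$.
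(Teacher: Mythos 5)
Your proof is correct and uses the same mechanism as the paper's: conic modules are rank-one torsion-free $R$-modules embedded in $\k[M]$, so any isomorphism extends to a $\k[M]$-automorphism of $\k[M]$, which must be multiplication by a unit $\lambda x^m$. One thing you do more carefully than the paper: the paper's proof only explicitly discusses $M$-graded isomorphisms and tacitly assumes that a (possibly ungraded) $R$-module isomorphism between conic modules can be upgraded to a graded one; your extension-of-scalars step actually proves this, since multiplication by $\lambda x^m$ is automatically homogeneous. Aside from that extra care, the arguments are the same.
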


\begin{proof}
Because conic modules are $M$-graded rank one torsion-free modules over the normal ring $R$, any $M$-graded isomorphism between them is given by multiplication by some monomial $x^m$ of the Laurent ring $\k[x^M]$. So $A_v \cong A_w$ if and only if $A_{w} = A_{v+m}$ for some $m\in M$. \end{proof}

Proposition \ref{isom} says that  $M$ acts naturally by translation  on the set of chambers of constancy. 
The orbits of this action index the distinct
 isomorphism classes of conic modules just as the chambers themselves index  distinct conic modules.
   Put differently, there is an induced CW decomposition of the real torus $M_{\mathbb R}/M$ whose maximal dimensional cells are in one-one-correspondence with the isomorphism classes of conic modules for $R$. The compactness of $M_{\mathbb R}/M$ has the  following immediate consequence:  
 \begin{coro} \cite[Prop.~3.6]{BrunsGubeladze} 
For any toric algebra $R$, there are finitely many isomorphism types of conic $R$-modules.\footnote{{For a different interpretation cf.~\cite{Brion, VdB-CM-tori} where a criterion for Cohen-Macaulayness of modules of covariants is given, and \cite[Prop.~3.2.3]{SmVdB} that shows that $R^{1/q}$ is a direct sum of modules of covariants belonging to so-called strongly critical characters.}}
\end{coro}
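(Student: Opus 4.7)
The plan is to exploit the translation action of $M$ on $M_{\mathbb{R}}$ to reduce the problem to a bounded fundamental domain, and then observe that a conic module depends only on a finite bounded tuple of integer data.

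First, I would invoke Proposition \ref{isom}, which says that $A_v \cong A_w$ whenever $v$ and $w$ differ by translation by an element of $M$ (one can see this very concretely: multiplication by the unit $x^m \in \Bbbk[M]$ restricts to an $R$-module isomorphism $A_v \xrightarrow{\sim} A_{v+m}$ for any $m \in M$). Fix once and for all a bounded fundamental domain $F \subset M_{\mathbb{R}}$ for the translation action of $M$ --- for instance the half-open parallelepiped spanned by a $\mathbb{Z}$-basis of $M$. Every $v \in M_{\mathbb{R}}$ can be translated by some $m \in M$ into $F$, so every conic $R$-module is isomorphic to $A_v$ for some $v \in F$.

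Second, I would appeal to the combinatorial description of conic modules recorded in Proposition \ref{Equations} and Remark \ref{ConeList}: the conic module $A_v$ is completely determined by the tuple of integers $(\lceil \langle v, n_i \rangle \rceil)_{n_i \in \Sigma_1}$ indexed by the finite set $\Sigma_1$ of primitive inward-pointing normals to the facets of $C$. Since $F$ is bounded in $M_{\mathbb{R}}$ and each $n_i$ is a continuous linear functional, the real number $\langle v, n_i \rangle$ lies in a bounded interval as $v$ ranges over $F$, so its ceiling takes only finitely many integer values. As $\Sigma_1$ is finite, the whole tuple takes only finitely many values. Combined with the first paragraph, only finitely many isomorphism classes of conic $R$-modules arise.

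The argument is essentially bookkeeping once one assembles the preceding propositions; the main (and only) point requiring care is the first step --- namely checking that isomorphism classes really do descend to $M_{\mathbb{R}}/M$, so that it is legitimate to restrict $v$ to a bounded fundamental domain. This is where Proposition \ref{isom} carries the load. It is worth emphasizing that one does \emph{not} need any finiteness statement about the quotient $\mathbb{Z}^{\Sigma_1}/\operatorname{image}(M)$ (which in general may fail to be finite): the boundedness of $F$ already forces the ceiling tuples into a finite set, independent of whether the lattice $\{(\langle m, n_i \rangle)_i : m \in M\} \subset \mathbb{Z}^{\Sigma_1}$ is of full rank.
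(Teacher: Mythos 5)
Your proof is correct, and it follows the same basic strategy as the paper --- use Proposition~\ref{isom} to reduce to a bounded region of $M_{\mathbb R}$ --- but the finishing step is different. The paper observes that the cell decomposition of $M_{\mathbb R}$ descends to a CW decomposition of the compact torus $M_{\mathbb R}/M$, and deduces finiteness from the fact that a compact CW complex has finitely many cells (hence finitely many top-dimensional cells, which index the isomorphism classes). You instead avoid the CW/compactness machinery entirely: after translating $v$ into a bounded fundamental domain $F$, you use the combinatorial encoding of $A_v$ by the ceiling tuple $\left(\lceil \langle v, n_i\rangle\rceil\right)_{n_i\in\Sigma_1}$ from Remark~\ref{ConeList}, and observe that boundedness of $F$ plus finiteness of $\Sigma_1$ forces this tuple to lie in a finite set. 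Your version is more elementary and more explicit (it even yields, in principle, an upper bound on the number of isomorphism classes in terms of the volume of $F$ and the sizes of the $n_i$), at the modest cost of leaning on the ceiling-tuple description rather than the topological picture. Your closing caution --- that one should not try to argue via finiteness of $\mathbb{Z}^{\Sigma_1}/\mathrm{image}(M)$, which can fail when $|\Sigma_1| > \mathrm{rank}(M)$ --- is well taken and shows you've identified the one place where a careless argument could go wrong.
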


\begin{ex}The cone $C$ in Figure \ref{fig: coneA} induces the CW decomposition of the plane in Figure \ref{fig: cellsA}. The  chambers of constancy
 in $M_{\mathbb{R}} = \mathbb R^2$ are the connected unions of same-colored cells.

\begin{figure}[h!t]
\centering
\begin{subfigure}[b]{.45\textwidth}
\centering
\begin{tikzpicture}[baseline=(current bounding box.center),scale=1.25]
	\clip (-2.25,-1.25) rectangle (2.25,2.25);
	\draw[fill=black!10] (-4,4) to (0,0) to (4,4);
	\node[dot] at (-2,2) {};
	\node[dot] at (-1,2) {};
	\node[dot] at (0,2) {};
	\node[dot] at (1,2) {};
	\node[dot] at (2,2) {};
	\node[faint dot] at (-2,1) {};
	\node[dot] at (-1,1) {};
	\node[dot] at (0,1) {};
	\node[dot] at (1,1) {};
	\node[faint dot] at (2,1) {};
	\node[faint dot] at (-2,0) {};
	\node[faint dot] at (-1,0) {};
	\node[dot] at (0,0) {};
	\node[faint dot] at (1,0) {};
	\node[faint dot] at (2,0) {};
	\node[faint dot] at (-2,-1) {};
	\node[faint dot] at (-1,-1) {};
	\node[faint dot] at (0,-1) {};
	\node[faint dot] at (1,-1) {};
	\node[faint dot] at (2,-1) {};
\end{tikzpicture}
\caption{The cone $C$.}
\label{fig: coneA}
\end{subfigure}
\begin{subfigure}[b]{.45\textwidth}
\centering
\begin{tikzpicture}[baseline=(current bounding box.center),scale=1.25]
	\clip (-2.25,-1.25) rectangle (2.25,2.25);
	\begin{scope}[yshift=2cm]
	\path[fill=blue!10] (-2.5,.5) to (-2,1) to (-1.5,.5) to (-1,1) to (-.5,.5) to (0,1) to (.5,.5) to (1,1) to (1.5,.5) to (2,1) to (2.5,.5) to (2,0) to (1.5,.5) to (1,0) to (.5,.5) to (0,0) to (-.5,.5) to (-1,0) to (-1.5,.5) to (-2,0) to (-2.5,.5);	
	\path[fill=red!10] (-2.5,.5) to (-2,0) to (-1.5,.5) to (-1,0) to (-.5,.5) to (0,0) to (.5,.5) to (1,0) to (1.5,.5) to (2,0) to (2.5,.5) to (2.5,-.5) to (2,0) to (1.5,.-.5) to (1,0) to (.5,-.5) to (0,0) to (-.5,-.5) to (-1,0) to (-1.5,-.5) to (-2,0) to (-2.5,-.5);	
	\draw[blue!75!black] (-2.5,.5) to (-2,1) to (-1.5,.5) to (-1,1) to (-.5,.5) to (0,1) to (.5,.5) to (1,1) to (1.5,.5) to (2,1) to (2.5,.5);
	\draw[red!75!black] (-2.5,.5) to (-2,0) to (-1.5,.5) to (-1,0) to (-.5,.5) to (0,0) to (.5,.5) to (1,0) to (1.5,.5) to (2,0) to (2.5,.5);
	\end{scope}
	\begin{scope}[yshift=1cm]
	\path[fill=blue!10] (-2.5,.5) to (-2,1) to (-1.5,.5) to (-1,1) to (-.5,.5) to (0,1) to (.5,.5) to (1,1) to (1.5,.5) to (2,1) to (2.5,.5) to (2,0) to (1.5,.5) to (1,0) to (.5,.5) to (0,0) to (-.5,.5) to (-1,0) to (-1.5,.5) to (-2,0) to (-2.5,.5);	
	\path[fill=red!10] (-2.5,.5) to (-2,0) to (-1.5,.5) to (-1,0) to (-.5,.5) to (0,0) to (.5,.5) to (1,0) to (1.5,.5) to (2,0) to (2.5,.5) to (2.5,-.5) to (2,0) to (1.5,.-.5) to (1,0) to (.5,-.5) to (0,0) to (-.5,-.5) to (-1,0) to (-1.5,-.5) to (-2,0) to (-2.5,-.5);	
	\draw[blue!75!black] (-2.5,.5) to (-2,1) to (-1.5,.5) to (-1,1) to (-.5,.5) to (0,1) to (.5,.5) to (1,1) to (1.5,.5) to (2,1) to (2.5,.5);
	\draw[red!75!black] (-2.5,.5) to (-2,0) to (-1.5,.5) to (-1,0) to (-.5,.5) to (0,0) to (.5,.5) to (1,0) to (1.5,.5) to (2,0) to (2.5,.5);
	\end{scope}
	\begin{scope}[yshift=0cm]
	\path[fill=blue!10] (-2.5,.5) to (-2,1) to (-1.5,.5) to (-1,1) to (-.5,.5) to (0,1) to (.5,.5) to (1,1) to (1.5,.5) to (2,1) to (2.5,.5) to (2,0) to (1.5,.5) to (1,0) to (.5,.5) to (0,0) to (-.5,.5) to (-1,0) to (-1.5,.5) to (-2,0) to (-2.5,.5);	
	\path[fill=red!10] (-2.5,.5) to (-2,0) to (-1.5,.5) to (-1,0) to (-.5,.5) to (0,0) to (.5,.5) to (1,0) to (1.5,.5) to (2,0) to (2.5,.5) to (2.5,-.5) to (2,0) to (1.5,.-.5) to (1,0) to (.5,-.5) to (0,0) to (-.5,-.5) to (-1,0) to (-1.5,-.5) to (-2,0) to (-2.5,-.5);	
	\draw[blue!75!black] (-2.5,.5) to (-2,1) to (-1.5,.5) to (-1,1) to (-.5,.5) to (0,1) to (.5,.5) to (1,1) to (1.5,.5) to (2,1) to (2.5,.5);
	\draw[red!75!black] (-2.5,.5) to (-2,0) to (-1.5,.5) to (-1,0) to (-.5,.5) to (0,0) to (.5,.5) to (1,0) to (1.5,.5) to (2,0) to (2.5,.5);
	\end{scope}
	\begin{scope}[yshift=-1cm]
	\path[fill=blue!10] (-2.5,.5) to (-2,1) to (-1.5,.5) to (-1,1) to (-.5,.5) to (0,1) to (.5,.5) to (1,1) to (1.5,.5) to (2,1) to (2.5,.5) to (2,0) to (1.5,.5) to (1,0) to (.5,.5) to (0,0) to (-.5,.5) to (-1,0) to (-1.5,.5) to (-2,0) to (-2.5,.5);	
	\path[fill=red!10] (-2.5,.5) to (-2,0) to (-1.5,.5) to (-1,0) to (-.5,.5) to (0,0) to (.5,.5) to (1,0) to (1.5,.5) to (2,0) to (2.5,.5) to (2.5,-.5) to (2,0) to (1.5,.-.5) to (1,0) to (.5,-.5) to (0,0) to (-.5,-.5) to (-1,0) to (-1.5,-.5) to (-2,0) to (-2.5,-.5);	
	\draw[blue!75!black] (-2.5,.5) to (-2,1) to (-1.5,.5) to (-1,1) to (-.5,.5) to (0,1) to (.5,.5) to (1,1) to (1.5,.5) to (2,1) to (2.5,.5);
	\draw[red!75!black] (-2.5,.5) to (-2,0) to (-1.5,.5) to (-1,0) to (-.5,.5) to (0,0) to (.5,.5) to (1,0) to (1.5,.5) to (2,0) to (2.5,.5);
	\end{scope}
	\begin{scope}[yshift=-2cm]
	\path[fill=blue!10] (-2.5,.5) to (-2,1) to (-1.5,.5) to (-1,1) to (-.5,.5) to (0,1) to (.5,.5) to (1,1) to (1.5,.5) to (2,1) to (2.5,.5) to (2,0) to (1.5,.5) to (1,0) to (.5,.5) to (0,0) to (-.5,.5) to (-1,0) to (-1.5,.5) to (-2,0) to (-2.5,.5);	
	\path[fill=red!10] (-2.5,.5) to (-2,0) to (-1.5,.5) to (-1,0) to (-.5,.5) to (0,0) to (.5,.5) to (1,0) to (1.5,.5) to (2,0) to (2.5,.5) to (2.5,-.5) to (2,0) to (1.5,.-.5) to (1,0) to (.5,-.5) to (0,0) to (-.5,-.5) to (-1,0) to (-1.5,-.5) to (-2,0) to (-2.5,-.5);	
	\draw[blue!75!black] (-2.5,.5) to (-2,1) to (-1.5,.5) to (-1,1) to (-.5,.5) to (0,1) to (.5,.5) to (1,1) to (1.5,.5) to (2,1) to (2.5,.5);
	\draw[red!75!black] (-2.5,.5) to (-2,0) to (-1.5,.5) to (-1,0) to (-.5,.5) to (0,0) to (.5,.5) to (1,0) to (1.5,.5) to (2,0) to (2.5,.5);
	\end{scope}

	\node[blue dot] at (-2,2) {};
	\node[blue dot] at (-1,2) {};
	\node[blue dot] at (0,2) {};
	\node[blue dot] at (1,2) {};
	\node[blue dot] at (2,2) {};
	\node[red dot] at (-1.5,1.5) {};
	\node[red dot] at (-.5,1.5) {};
	\node[red dot] at (.5,1.5) {};
	\node[red dot] at (1.5,1.5) {};
	\node[blue dot] at (-2,1) {};
	\node[blue dot] at (-1,1) {};
	\node[blue dot] at (0,1) {};
	\node[blue dot] at (1,1) {};
	\node[blue dot ] at (2,1) {};
	\node[red dot] at (-1.5,.5) {};
	\node[red dot] at (-.5,.5) {};
	\node[red dot] at (.5,.5) {};
	\node[red dot] at (1.5,.5) {};
	\node[blue dot ] at (-2,0) {};
	\node[blue dot] at (-1,0) {};
	\node[blue dot] at (0,0) {};
	\node[blue dot] at (1,0) {};
	\node[blue dot] at (2,0) {};
	\node[red dot] at (-1.5,-.5) {};
	\node[red dot] at (-.5,-.5) {};
	\node[red dot] at (.5,-.5) {};
	\node[red dot] at (1.5,-.5) {};
	\node[blue dot] at (-2,-1) {};
	\node[blue dot] at (-1,-1) {};
	\node[blue dot] at (0,-1) {};
	\node[blue dot] at (1,-1) {};
	\node[blue dot] at (2,-1) {};
	
	\node[blue] at (0,-.5) {$R$};
	\node[red] at (.5,0) {$A$};
\end{tikzpicture}
\caption{The CW decomposition of the plane.}%Chambers are connected components of the same color.}
\label{fig: cellsA}
\end{subfigure}
\caption{A cone and the corresponding chambers.}
\end{figure}
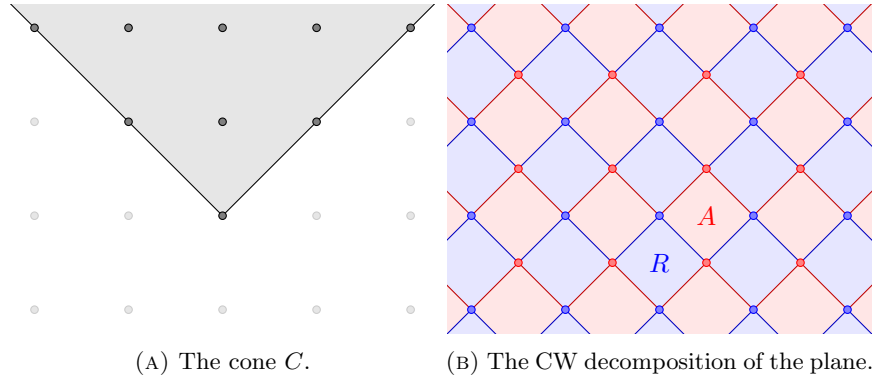

The ring $R = \k[C\cap \mathbb Z^2]$ has presentation
\[ R= \mathsf{k}[x,y,z]/(xz-y^2). \] 
There are two isomorphism classes of conic $R$-modules, represented by $A_0= R$ (the blue chambers) and  
$A = Ry+Rz$ (the red chambers).
\end{ex}

\begin{ex}\label{ConeOverSquare}
Consider the cone $C$  in $\mathbb{R}^3$ over the  unit square depicted in Figure 2. The corresponding toric algebra $R = \k[C\cap \mathbb Z^3]$ has presentation  $ \k[x, y, z, w]/(xz-yw)$. There are three isomorphism types of conic $R$-modules, represented by  $R=A_0$, $(x, y)R = A_1$ and $(x, w)R = A_2$. These are represented (up to translation by $\mathbb Z^3$) by the three chambers depicted in Figure 3. 
Figure 4  depicts several subsets of the cellular neighborhood of a point in $\mathbb R^3$. 
\end{ex}

\begin{figure}[h!t]
\begin{tikzpicture}
	\clip (-2.25,-.5) rectangle (2.25,3);
	\node[coordinate] (000) at \ThreeD{0}{0}{0} {};
	\node[coordinate] (002) at \ThreeD{0}{0}{2} {};
	\node[coordinate] (202) at \ThreeD{2}{0}{2} {};
	\node[coordinate] (022) at \ThreeD{0}{2}{2} {};
	\node[coordinate] (222) at \ThreeD{2}{2}{2} {};

	\draw[fill=black!10] (202) to (000) to (022) to (002) to (202);
	\draw (000) to (002);
	\draw (000) to (222);
	\draw (202) to (222) to (022);

	\node[dot] (000) at \ThreeD{0}{0}{0} {};
	
	\node[dot] (001) at \ThreeD{0}{0}{1} {};
	\node[dot] (101) at \ThreeD{1}{0}{1} {};
	\node[dot] (011) at \ThreeD{0}{1}{1} {};
	\node[dot] (111) at \ThreeD{1}{1}{1} {};
	
	\node[dot] (002) at \ThreeD{0}{0}{2} {};
	\node[dot] (102) at \ThreeD{1}{0}{2} {};
	\node[dot] (202) at \ThreeD{2}{0}{2} {};
	\node[dot] (012) at \ThreeD{0}{1}{2} {};
	\node[dot] (112) at \ThreeD{1}{1}{2} {};
	\node[dot] (212) at \ThreeD{2}{1}{2} {};
	\node[dot] (022) at \ThreeD{0}{2}{2} {};
	\node[dot] (122) at \ThreeD{1}{2}{2} {};
	\node[dot] (222) at \ThreeD{2}{2}{2} {};
	\draw[fill=black!25] (001) to (101) to (111) to (011) to (001);
\end{tikzpicture}
\caption{The cone $C$ over a square.}
\end{figure}
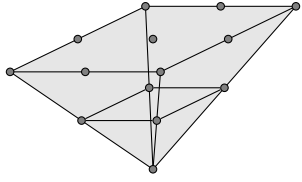

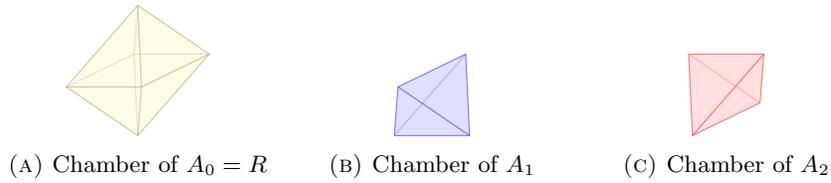
\begin{figure}[h!t]
\centering
\begin{subfigure}[b]{.3\textwidth}
\centering
\begin{tikzpicture}[baseline=(current bounding box.center)]
	\ychamber
\end{tikzpicture}
\caption{Chamber of $A_0 = R$}
\end{subfigure}
\begin{subfigure}[b]{.3\textwidth}
\centering
\begin{tikzpicture}[baseline=(current bounding box.center)]
	\bchamber
\end{tikzpicture}
\caption{Chamber of $A_1$}
\end{subfigure}
\begin{subfigure}[b]{.3\textwidth}
\centering
\begin{tikzpicture}[baseline=(current bounding box.center)]
	\rchamber
\end{tikzpicture}
\caption{Chamber of  $A_2$}
\end{subfigure}
\caption{The three types of Chambers of Constancy.}
\label{fig: chambersC}
\end{figure}

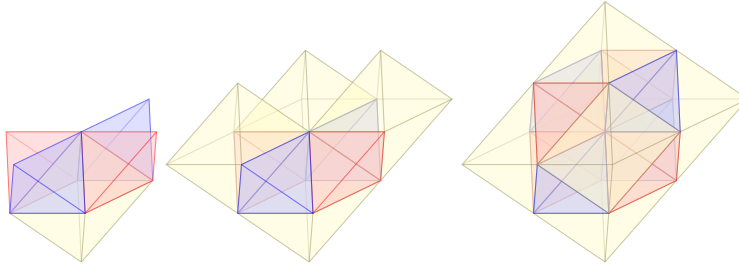
\begin{figure}[h!t]
\begin{tikzpicture}

\begin{scope}[xshift=2cm]
	\rchamber[\ThreeD{-1}{0}{0}]
	\bchamber[\ThreeD{0}{-1}{0}]
	\ychamber[\ThreeD{0}{0}{0}]
	\rchamber[\ThreeD{0}{0}{0}]
	\bchamber[\ThreeD{0}{0}{0}]
\end{scope}
\end{tikzpicture}
\begin{tikzpicture}
	\ychamber[\ThreeD{0}{0}{1}]
	\rchamber[\ThreeD{-1}{0}{0}]
	\bchamber[\ThreeD{0}{-1}{0}]
	\ychamber[\ThreeD{0}{0}{0}]
	\ychamber[\ThreeD{1}{0}{1}]
	\ychamber[\ThreeD{0}{1}{1}]
	\rchamber[\ThreeD{0}{0}{0}]
	\bchamber[\ThreeD{0}{0}{0}]

\end{tikzpicture}
\begin{tikzpicture}
	\ychamber[\ThreeD{0}{0}{1}]
	\rchamber[\ThreeD{0}{0}{1}]
	\bchamber[\ThreeD{0}{0}{1}]
	\rchamber[\ThreeD{-1}{0}{0}]
	\bchamber[\ThreeD{0}{-1}{0}]
	\ychamber[\ThreeD{0}{0}{0}]
	\ychamber[\ThreeD{1}{0}{1}]
	\ychamber[\ThreeD{0}{1}{1}]
	\ychamber[\ThreeD{1}{1}{2}]
	\rchamber[\ThreeD{0}{0}{0}]
	\bchamber[\ThreeD{0}{0}{0}]
	\rchamber[\ThreeD{0}{1}{1}]
	\bchamber[\ThreeD{1}{0}{1}]
	\ychamber[\ThreeD{1}{1}{1}]
\end{tikzpicture}
\caption{Unions of chambers}
\label{fig: keplerstar}
\end{figure}

\subsection{Frobenius and Cohen-Macaulayness}
Fix a positive integer $q$.  Let  $q^{-1}\mathbb Z$ be the cyclic subgroup of $\mathbb Q$ generated by $\frac{1}{q}$ and let
  $q^{-1}M$ be the abelian group $ q^{-1}\mathbb Z \otimes M$. Note that 
  $q^{-1}M$ is a lattice containing  $M$ as an index $q^{d}$ subgroup, where $d $ is the rank of $M$, and that $q^{-1}M_{\mathbb R} = M_{\mathbb R}$.

Consider the toric algebra  %$\k[C\cap q^{-1}M]$ 
$$
R^{1/q}:= \k[C\cap q^{-1}M] = \Span\{x^{m/q} \, \, | \, \, \frac{m}{q} \in C\cap q^{-1}M\}.
$$
%which we will denote $R^{1/q}.$
This notation is meant to be suggestive: if $q = p^e$ where the ground field $\k$ is a perfect of characteristic $p$, then $R^{1/q}$ is the ring of $p^e$-th roots of all elements in $R = \k[C\cap M]$. However, the toric algebra $R^{1/q}$  makes sense in general for all natural numbers $q$,  even in characteristic zero.

The toric algebra $R^{1/q}$  is naturally graded by the lattice  $q^{-1}M$.  It contains the sub-algebra $R = \k[C\cap M]$ as the 	subring of elements whose degrees are in the sublattice $M$.

\begin{prop}\label{ISOM}  \cite{Bruns/arXiv:math/0408110}
The $R$-module $R^{1/q}$ decomposes as a direct sum of  (finitely many) modules isomorphic to conic $R$-modules. For sufficiently large $q$, every conic module appears (up to isomorphism) as a summand of $R^{1/q}$. 
\end{prop}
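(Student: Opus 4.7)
The plan is to exhibit an explicit $R$-module decomposition of $R^{1/q}$ by grouping its monomials according to their cosets in $q^{-1}M/M$, and then to identify each piece with a conic module via a monomial shift. Since $R^{1/q}$ is $q^{-1}M$-graded and its subring $R$ consists of those monomials whose degrees lie in $M$, this immediately yields an $R$-module decomposition
$$R^{1/q} \;=\; \bigoplus_{[v] \in q^{-1}M/M} N_{[v]}, \qquad N_{[v]} := \Span\{x^{v+m} \mid m \in M,\ v+m \in C\},$$
where $v \in q^{-1}M$ is any chosen representative of $[v]$. The sum is finite because $[q^{-1}M : M] = q^d$ with $d$ the rank of $M$.

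Next I would identify each summand with a conic module. Multiplication by $x^{-v}$ in the Laurent ring $\k[q^{-1}M]$ is an $R$-linear bijection (forgetting the $q^{-1}M$-grading)
$$N_{[v]} \;\xrightarrow{\ \cdot x^{-v}\ }\; \Span\{x^m \mid m \in M \cap (C-v)\} \;=\; A_{-v},$$
since $R$ acts by multiplication by monomials $x^{m''}$ with $m'' \in M$, which commutes with multiplication by $x^{-v}$. Hence $R^{1/q} \cong \bigoplus_{[v]} A_{-v}$ as $R$-modules, proving the first assertion.

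For the second assertion, I would combine the finiteness of isomorphism classes of conic modules (the corollary following Proposition \ref{isom}) with a density argument. Fix chambers of constancy $\Delta_1, \dots, \Delta_r$ representing each isomorphism class. By Proposition \ref{CellDecomp}, each $\Delta_i$ contains a full-dimensional open cell, namely the one obtained from the description in Proposition \ref{Equations} by replacing every inequality with a strict one, so each $\Delta_i$ has non-empty interior in $M_\mathbb{R}$. For $q$ sufficiently large, the lattice $q^{-1}M$ has mesh (relative to a fixed basis of $M$) of order $1/q$, so it must contain a point $u_i$ in the interior of each $\Delta_i$. Setting $v_i := -u_i \in q^{-1}M$, the summand $N_{[v_i]}$ in the decomposition is isomorphic to $A_{u_i} = A_{\Delta_i}$ by Proposition \ref{isom}, exhibiting each conic module (up to isomorphism) as a summand of $R^{1/q}$.

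The argument is conceptually direct; I do not expect a serious obstacle. The two small technical verifications are the shift isomorphism $N_{[v]} \cong A_{-v}$ (which uses only that the $R$-action is by monomial multiplication) and the non-emptiness of the interior of each chamber of constancy (visible from the cell decomposition). The main content is the idea of exploiting the $q^{-1}M/M$ coset decomposition of $R^{1/q}$ and then pushing the combinatorics onto the known finite set of conic modules.
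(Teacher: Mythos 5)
Your proof is correct and takes essentially the same approach as the paper: decompose $R^{1/q}$ by the residual $q^{-1}M/M$-grading, identify each graded piece with a conic module $A_{-v}$ via the monomial shift $x^{-v}$, and for large $q$ choose a coset representative in the interior of a chamber for each isomorphism class. Your mesh/interior argument for the second part merely makes explicit what the paper leaves to the reader.
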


\begin{proof} 
Considering $R^{1/q}$ as a module over the subring $R = \k[C\cap M]$, 
there is  a  natural grading by the quotient group $q^{-1}M/M.$ The graded pieces give an $R$-module decomposition 
$$
R^{1/q} = \bigoplus_{[\frac{v}{q}]\in q^{-1}M/M} [R^{1/q}]_{[\frac{v}{q}]} = \bigoplus_{[\frac{v}{q}]\in q^{-1}M/M} {\text{Span}} \{x^{\frac{v}{q} + m} \, \, | \, \, \frac{v}{q}+m \in (\frac{v}{q}+M)\cap C\}$$
indexed by the classes $[\frac{v}{q}] =  \frac{v}{q} + M$ of the quotient group $q^{-1}M/M$.  Note that $R^{1/q}$ is finite over its degree $0$-graded piece, $R = \k[C\cap M]$.

The $q^{-1}M/M$-graded pieces of this decomposition are all (isomorphic to)  conic modules. Indeed, fixing any representative $\frac{v}{q}$ for a coset in $q^{-1}M/M,$
the natural map 
$$A_{-\frac{v}{q}} \rightarrow [R^{1/q}]_{[\frac{v}{q}]}$$
given by multiplication by $x^{v/q}$ is easily seen to be an isomorphism. 
 
 To see that every conic module appears as a summand of $R^{1/q}$, we need only observe that we can refine the lattice by a sufficiently large $q$ so that each chamber of constancy $\Delta$ contains a lattice point in $q^{-1}M$. 
  \end{proof}

 \begin{coro}\label{CM} The conic modules of a toric algebra are all  Cohen-Macaulay. 
\end{coro}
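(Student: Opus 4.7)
The plan is to deduce the Cohen-Macaulayness of every conic $R$-module from the Cohen-Macaulayness of the toric algebras $R^{1/q}$, using the splitting in Proposition \ref{ISOM} together with the stability of the Cohen-Macaulay property under direct summands.

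First, I fix an arbitrary conic module $A_v$ of $R$. By Proposition \ref{ISOM}, I may choose $q$ sufficiently large so that $A_v$ appears (up to isomorphism) as an $R$-module direct summand of $R^{1/q}$. Thus it suffices to show that $R^{1/q}$ is Cohen-Macaulay as an $R$-module, since a direct summand of a Cohen-Macaulay module over a Noetherian local (or graded local) ring is again Cohen-Macaulay.

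Next, I observe that $R^{1/q} = \k[C \cap q^{-1}M]$ is itself a toric algebra, associated to the same full-dimensional cone $C$ but with respect to the refined lattice $q^{-1}M$. By the fact recalled in Section \ref{Notation} that every normal toric algebra is Cohen-Macaulay (Hochster's theorem), $R^{1/q}$ is Cohen-Macaulay as a ring. Moreover, the inclusion $R \hookrightarrow R^{1/q}$ is a finite extension of $\NN$-graded (or equivalently, local after localization at the irrelevant ideal) Noetherian rings of the same Krull dimension $d = \mathrm{rank}(M)$, since $q^{-1}M$ contains $M$ as a subgroup of finite index $q^d$ and the two rings share the same fraction field extension structure on the torus.

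Finally, for any finite extension $R \subset S$ of Noetherian graded local rings of the same dimension $d$, the depth of $S$ as an $R$-module equals its depth as an $S$-module (a system of parameters for $R$ remains a system of parameters for $S$, and regular sequences are preserved under finite extensions of the same dimension). Therefore $R^{1/q}$ is Cohen-Macaulay as an $R$-module of dimension $d$. Since $A_v$ is a direct summand of $R^{1/q}$ as an $R$-module, and $A_v$ has the same dimension $d$ (being rank one over the $d$-dimensional domain $R$, by Proposition \ref{ConicModProp}(1)), we conclude that $A_v$ is Cohen-Macaulay. The main technical point to verify carefully is the depth transfer in the last step, but this is standard once the finite-extension setup is in place.
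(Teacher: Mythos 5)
Your proof is correct and follows essentially the same route as the paper's: both use Proposition \ref{ISOM} to realize an arbitrary conic module as an $R$-module direct summand of $R^{1/q}$ for large $q$, observe that $R^{1/q}$ is a Cohen-Macaulay ring which is finite over $R$ and hence Cohen-Macaulay as an $R$-module, and conclude by the fact that direct summands of Cohen-Macaulay modules are Cohen-Macaulay. You merely spell out the depth-transfer step in more detail than the paper does.
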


\begin{proof}
Toric algebras are Cohen-Macaulay. Thus $R^{1/q}$ is a Cohen-Macaulay ring. Being finite over the subring $R$, $R^{1/q}$ is Cohen-Macaulay as an $R$-module.  Thus each of the $R$-module direct summands $R_{[\frac{v}{q}]}$ is a Cohen Macaulay $R$-module.   For sufficiently large $q$, these include all the isomorphism types of conic modules. 
\end{proof}

\subsection{The Poset of Chambers of Constancy}
The set of conic $R$-modules has a natural partial ordering given by inclusion.  Since the conic modules are naturally indexed by the chambers of constancy, we have a corresponding partial ordering of the set of chambers: $\Delta \preceq \Delta'$ if and only if  $A_{\Delta}  \subseteq A_{\Delta'}$. 

Proposition \ref{ConicModProp}(3) can be rephrased using this language as follows:

\begin{prop}\label{Rad}
Let $\Delta$ and $\Delta'$ be chambers of constancy for the toric ring $R$. Then
$$
\Hom_{R}(A_{\Delta}, A_{\Delta'}) = \Span \{x^m \, | \, m+\Delta \preceq \Delta'\}.
$$ 
\end{prop}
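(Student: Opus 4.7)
The plan is to derive Proposition \ref{Rad} as essentially a rephrasing of Proposition \ref{ConicModProp}(3) in the chamber language, so the work is bookkeeping rather than new ideas.

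First I would fix representatives $v \in \Delta$ and $w \in \Delta'$, so that $A_\Delta = A_v$ and $A_{\Delta'} = A_w$. By Proposition \ref{ConicModProp}(3),
\[ \Hom_R(A_\Delta, A_{\Delta'}) \;=\; \Span\bigl\{x^m \;\big|\; m + \bigl((C+v)\cap M\bigr) \subseteq C + w\bigr\}. \]
Since $m \in M$, the left-hand side of the containment can be rewritten as $(C+v+m) \cap M$. Because $(C+v+m)\cap M$ is automatically contained in $M$, the containment $(C+v+m)\cap M \subseteq C+w$ is equivalent to $(C+v+m)\cap M \subseteq (C+w)\cap M$, which by Definition \ref{ConicModDef} is exactly the inclusion $A_{v+m} \subseteq A_w$.

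Next I would translate this inclusion into the poset language. The vector $v+m$ lies in the translated chamber $\Delta + m$, so $A_{v+m} = A_{\Delta+m}$; similarly $A_w = A_{\Delta'}$. Therefore $A_{v+m} \subseteq A_w$ is, by definition of $\preceq$, the same as $\Delta + m \preceq \Delta'$. Substituting, we obtain
\[ \Hom_R(A_\Delta, A_{\Delta'}) \;=\; \Span\{x^m \mid \Delta + m \preceq \Delta'\}, \]
which is the claimed description.

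The only subtle point is confirming that the translate $\Delta + m$ really is a chamber of constancy and that $v + m$ sits in it; this is immediate because $A_{v+m}$ is the shift $x^m \cdot A_v$ under the $M$-grading, so $v$ and $v+m$ determine the same equivalence class modulo translation by $M$, and the chamber decomposition is equivariant under translation by elements of $M$ (equivalently, the facet-defining inequalities of Proposition \ref{Equations} transform by integer shifts when we replace $v$ by $v+m$). Since this is the main — and essentially only — thing to verify, I expect no serious obstacle: the proposition is a direct reformulation of Proposition \ref{ConicModProp}(3) once the $\preceq$-notation is unpacked.
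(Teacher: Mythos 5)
Your proposal is correct and takes essentially the same route as the paper, which simply presents Proposition \ref{Rad} as a rephrasing of Proposition \ref{ConicModProp}(3) in the chamber/poset language without writing out the bookkeeping; you have supplied exactly that bookkeeping (rewriting $m+(C+v)\cap M$ as $(C+v+m)\cap M$, translating the containment into $A_{\Delta+m}\subseteq A_{\Delta'}$, and noting the $M$-equivariance of the chamber decomposition), and each step checks out.
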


\begin{rem}\label{Order}
The partial ordering on chambers  can be described in terms  of the list of integers
$$
 (\lceil  \langle v, n_1 \rangle \rceil, \lceil  \langle v, n_2 \rangle \rceil, \dots, \lceil  \langle v, n_t \rangle \rceil)
$$
determining $\Delta$ as in 
Remark 
\ref{ConeList}. 
 That is, $\Delta \preceq \Delta'$ if and only if their corresponding integer lists $(d_1, d_2, \dots, d_t) $ and $(d_1', d_2', \dots, d_t')$ satisfy 
$-d_ i \leq -d_i' $ for every $i$.  That is,  $\Delta \preceq \Delta'$ if and only if $ \lceil  \langle v, n_i \rangle \rceil \geq  \lceil  \langle v', n_i \rangle \rceil$ for all $n_i \in \Sigma_1$ and any (equivalently, every) $v\in \Delta$ and $v' \in \Delta'$.
\end{rem}

We  assign to each chamber of constancy a numerical value which helps us understand its place in the poset:
\begin{defn}\label{Degree}
The  {\bf degree} of the  chamber of constancy  $\Delta$
is 
$$
\deg(\Delta) = - \sum_{n_i\in \Sigma_1} \lceil \langle v, n_i \rangle \rceil,
$$
 where $v$ is any vector in $\Delta$ and  the $n_i$ run through the finite set $\Sigma_1 \subset \Hom_{\mathbb Z}(M, \mathbb Z)$ of primitive inward-pointing  normals to the facets of $C$. 
\end{defn}

  \begin{rem}\label{DivisorDegree} The  degree of a chamber of constancy $\Delta$ is equal to the degree of the corresponding torus-invariant divisor $D$ on the affine toric variety $X_{\sigma} =  \Spec R$ with the property that  $A_{\Delta}$ is the module of global sections of the associated reflexive sheaf $\mathcal O_{X_{\sigma}}(D)$. See Remark \ref{ConeList}. 
   \end{rem}

\bigskip
The   technical  heart of our main argument proceeds by induction on the degree. The following is a crucial step: 
\begin{lemma}\label{Covering}
Given chambers of constancy $\Delta\succeq\Delta'$, the following are equivalent.
\begin{enumerate}
	\item $\deg(\Delta)-\deg(\Delta')=1$.
	\item The chain $\Delta\succ \Delta'$  is saturated in the poset of chambers of constancy; that is, if $\Delta''$ is a chamber of constancy with $\Delta\succeq \Delta'' \succeq \Delta'$, then either $\Delta''=\Delta$ or $\Delta''=\Delta'$.
	\item $\Delta$ and $\Delta'$ are \emph{adjacent}; that is, they are on opposite sides of a unique hyperplane.
\end{enumerate}
\end{lemma}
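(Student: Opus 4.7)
My plan is to reduce each of the three conditions to a simple statement about the lists of integers $(d_1,\ldots,d_t)$ and $(d_1',\ldots,d_t')$ indexing $\Delta$ and $\Delta'$, in the sense of Remark~\ref{ConeList}. By Remark~\ref{Order}, the hypothesis $\Delta\succeq\Delta'$ translates to $d_i\le d_i'$ for every $n_i\in\Sigma_1$, and directly from Definition~\ref{Degree},
\[
\deg(\Delta)-\deg(\Delta')=\sum_{n_i\in\Sigma_1}(d_i'-d_i)\ge 0.
\]
I will show that all three conditions hold precisely when this sum equals $1$.

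For $(1)\Leftrightarrow(3)$, I would count the hyperplanes of the arrangement $\{\langle x,n_i\rangle=k \mid n_i\in\Sigma_1,\ k\in\mathbb{Z}\}$ which separate $\Delta$ from $\Delta'$. Since the defining inequalities give $\langle x,n_i\rangle\le d_i$ on $\Delta$ and $\langle x,n_i\rangle>d_i'-1$ on $\Delta'$, for each index $i$ the hyperplanes with normal $n_i$ that separate the two chambers are exactly those at the integer levels $d_i, d_i+1,\ldots, d_i'-1$. The total number of separating hyperplanes is $\sum(d_i'-d_i)$, which equals $1$ if and only if a unique separating hyperplane exists.

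The implication $(1)\Rightarrow(2)$ is then immediate: any chamber $\Delta''$ with $\Delta\succeq\Delta''\succeq\Delta'$ has integer list $(d_i'')$ satisfying $d_i\le d_i''\le d_i'$, so $\sum(d_i''-d_i)\in\{0,1\}$, which forces $(d_i'')=(d_i)$ or $(d_i'')=(d_i')$, and hence $\Delta''=\Delta$ or $\Delta''=\Delta'$.

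The main content is the converse $(2)\Rightarrow(1)$, which I would prove by contrapositive. Assuming $\sum(d_i'-d_i)\ge 2$, the task is to exhibit a chamber $\Delta''$ strictly between $\Delta$ and $\Delta'$. My approach is to pick points $v$ and $v'$ in the interiors of $\Delta$ and $\Delta'$ respectively and to track the chambers traversed by the line segment $v_t=(1-t)v+tv'$ as $t$ ranges over $[0,1]$. By the integer-list description, the chamber containing $v_t$ changes exactly when some $\langle v_t,n_i\rangle$ crosses an integer, and the total number of such crossings along the segment equals $\sum(d_i'-d_i)\ge 2$. The key obstacle is that several crossings might occur simultaneously at the same $t^*\in(0,1)$, causing the segment to jump from $\Delta$ directly to $\Delta'$ without visiting an intermediate chamber. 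I would resolve this by a genericity argument: each equality between two crossing times $t_{i,k}=t_{j,\ell}$ is a codimension-one condition on the pair $(v,v')$, so after a small perturbation of $v'$ within the interior of $\Delta'$ all crossings occur at distinct times $0<t_1<\cdots<t_N<1$ with $N\ge 2$, and for any $t\in(t_1,t_2)$ the point $v_t$ lies in a chamber $\Delta''$ with $\Delta\succ\Delta''\succ\Delta'$, contradicting the saturation assumption.
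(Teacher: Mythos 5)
Your proposal is correct, and it is considerably more complete than the paper's own proof, which only sketches the implications starting from condition $(1)$ and asserts the rest is ``straightforward.'' Your reduction to the integer lists via Remark~\ref{ConeList} and Remark~\ref{Order} matches the paper's underlying framework, and the hyperplane-counting argument for $(1)\Leftrightarrow(3)$ as well as the immediate $(1)\Rightarrow(2)$ are exactly what the paper has in mind. Where you diverge is the hard direction $(2)\Rightarrow(1)$: you give a genuine \emph{gallery-walk} argument, tracking the segment from a generic interior point of $\Delta$ to a generic interior point of $\Delta'$ and observing that, after perturbing $v'$ so that the $\sum(d_i'-d_i)\geq 2$ wall-crossing times $t_{i,k}=\frac{k-\langle v,n_i\rangle}{\langle v',n_i\rangle-\langle v,n_i\rangle}$ become pairwise distinct (which requires, and implicitly uses, that distinct $n_i,n_j\in\Sigma_1$ are linearly independent because $\sigma$ is pointed), the chamber just past the first crossing sits strictly between $\Delta$ and $\Delta'$. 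The paper never addresses this direction at all, even though the inductive step of the Acyclicity Lemma relies on the existence of such an intermediate chamber. Your approach has the additional benefit of producing an entire saturated chain from $\Delta$ down to $\Delta'$ (one chamber per crossing interval), which is actually the form in which the lemma is invoked later, so your version is closer to being self-sufficient for the rest of the paper. One small imprecision in your writeup: if only \emph{some} crossings coincide at a common $t^*$, the segment skips over a chamber rather than jumping all the way from $\Delta$ to $\Delta'$; this does not affect the argument since the genericity fix handles both cases, but the phrasing slightly overstates the obstacle.
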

\begin{proof}
The proof is straightforward when one considers the linear inequalities defining $\Delta$ and $\Delta'$ given by Proposition \ref{Equations}.
In order to have their degrees differ by exactly one, the only possibility is that exactly one of these linear inequalities differ by exactly 1. So we have  exactly one $n_i\in \Sigma_1$ such that 
 $\Delta'$  has an (intersecting) supporting hyperplane  defined by $\langle x, n_i  \rangle = d  $ (where  $d = \lceil  \langle v', n_i \rangle \rceil $ for $v' \in \Delta'$)
  and  $\Delta$ has (intersecting) supporting hyperplane  $\langle x, n_i \rangle = d+1$; 
This means that $\Delta$ and $\Delta'$ are on opposite sides of their shared supporting hyperplane  whose equation
is   $\langle x, n_i \rangle = d$, and no other module $A_{\Delta''}$ can be squeezed between $A_{\Delta} $ and $A_{\Delta'}$. 
\end{proof}

 \subsection{Restriction to a Hyperplane}\label{Hyperplane}

Let $H$ be a supporting hyperplane of our  rational polyhedral cone $C$ in $ M_{\mathbb R}$. 
Then $C \cap H$ is  a polyhedral cone in the vector space $H,$   rational 
with respect to the  lattice $M\cap H.$ The cone $C\cap H$  is full-dimensional and/or pointed   (in $H$) if $C$ is (in $M_{\mathbb R}$).

As usual, we index the facets of $C$ by  the set $\Sigma_1 = \{n_1, \dots, n_t\}$ of primitive inward-pointing normals. 
We can choose an appropriate hyperplane $H$ so that the cone $C\cap H$ is one of these facets,  say with corresponding normal $n_t$.
The facets of the cone $C\cap H$ are the intersections of the facets of $C$ with $H$, and hence 
  indexed by the (restrictions to $H$ of the) linear functionals 
 $\{{n_1}, \dots, {n_{t-1}}\}.$  These are the primitive inward-pointing normals  
to the facets of 
   $C\cap H$.  
   
   Consider the corresponding toric algebra of the cone $C\cap H$ in $H$: 
$$
S = \k[(C\cap H)\cap (M\cap H)] := \Span\{x^m \,\,\, | \,\,\,  m \in (C\cap H) \cap (M\cap H)\}.
$$
    There is a natural surjection of $\k$-algebras
   $$
   R \twoheadrightarrow S \,\,\,\,\,\,\,\, x^m \mapsto \left\{\begin{array}{cc}  x^m  & {\text{if}} \,\,\,\,  m \in H \\  0 &  {\text{if}}  \,\,\,\, m \not\in H, \end{array} \right.
   $$ 
   whose kernel is the height one  monomial ideal of $R$  
   \begin{equation}\label{Ideal}
   P_{n_t} := \Span\{x^m \, \, | \, \, \langle m, n_t \rangle > 0\}.  
   \end{equation}

\begin{prop}\label{Restrict}
Let $H$ be a supporting hyperplane of our full-dimensional rational polyhedral cone $C$ in $M_{\mathbb R}$ and let 
$$ R\twoheadrightarrow S$$
be the induced surjection of toric algebras corresponding to the rational polyhedral cones  $C$ and $C\cap H$, respectively.
Then 
\begin{enumerate}
\item
The toric $S$-modules are in one-to-one correspondence with toric $R$-modules of vectors $v$ lying in the subspace $ H$ of $ M_{\mathbb R}$.
\item \label{item2}The map $\Delta\mapsto \Delta\cap H$ induces a bijection between the chambers of constancy in $M_{\mathbb R}$ (indexing conic $R$-modules)
 that meet $H$ and the chambers of constancy in $H$ (indexing conic $S$-modules).  
\item The bijection in \eqref{item2} respects the partial order and the  cell decomposition of chambers. In particular, the open cells of $H$ are the same as the open cells of $M_{\mathbb R}$ contained in $H$.
\end{enumerate}
\end{prop}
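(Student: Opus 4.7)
The proof will exploit the explicit characterization of conic modules and their chambers via the ceilings $\lceil\langle v, n_i\rangle\rceil$ from Proposition \ref{ConicModProp}(2) and Proposition \ref{Equations}. The key observation driving everything is that if $H$ is the facet hyperplane corresponding to $n_t \in \Sigma_1$, then for any $v \in H$ we have $\langle v, n_t\rangle = 0$, so $\lceil\langle v,n_t\rangle\rceil = 0$ automatically. Thus the ceiling list that classifies $A_v$ as an $R$-module collapses from $t$ coordinates to $t-1$ coordinates, matching exactly the classification data for conic $S$-modules with respect to the primitive normals $\{n_1,\ldots,n_{t-1}\}$ of $C \cap H$ inside $H$.

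For part (1), I will define the correspondence by sending $v \in H$ to the pair $(A_v^R, A_v^S)$ and check that $A_v^R = A_w^R$ if and only if $A_v^S = A_w^S$ for $v,w \in H$. This follows directly from Remark \ref{ConeList} together with the observation that the $t$-th coordinate $\lceil\langle v,n_t\rangle\rceil$ vanishes identically on $H$, so the classifying lists for the $R$- and $S$-modules carry the same information. Surjectivity is immediate: any conic $S$-module $A_v^S$ is the image of the conic $R$-module $A_v^R$.

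For part (2), given a chamber $\Delta$ in $M_\mathbb{R}$ meeting $H$, pick $v \in \Delta \cap H$; since $\langle v,n_t\rangle = 0$, the defining inequality $\lceil\langle v,n_t\rangle\rceil - 1 < \langle x,n_t\rangle \leq \lceil\langle v,n_t\rangle\rceil$ from Proposition \ref{Equations} becomes $-1 < \langle x,n_t\rangle \leq 0$. Intersecting with $H$ forces $\langle x,n_t\rangle = 0$, so $\Delta \cap H$ is cut out by the remaining $t-1$ inequalities, which are precisely the defining inequalities of the $S$-chamber of constancy containing $v$ (applied in $H$ with normals $n_1,\ldots,n_{t-1}$). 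Injectivity of $\Delta \mapsto \Delta \cap H$ follows from the part (1) bijection, and surjectivity follows by lifting: any $S$-chamber $\Delta'$ has some $v \in \Delta'$, and the unique $R$-chamber through $v$ restricts to $\Delta'$.

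For part (3), preservation of the partial order is immediate from Remark \ref{Order}: the inequalities $\lceil\langle v,n_i\rangle\rceil \geq \lceil\langle v',n_i\rangle\rceil$ for $i = 1,\ldots,t$ reduce, on $H$, to the same inequalities for $i = 1,\ldots,t-1$ since the $t$-th coordinate is automatically zero for both. For the CW structure, recall from Proposition \ref{CellDecomp} that each cell of $\Delta$ has the form $\tau_\Omega$ indexed by a subset $\Omega \subseteq \Sigma_1$. With $d_t = 0$, a cell $\tau_\Omega$ lies in $H$ iff its defining conditions force $\langle x,n_t\rangle = 0$, which is equivalent to $n_t \notin \Omega$ (otherwise $\tau_\Omega$ requires $-1 < \langle x,n_t\rangle < 0$). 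Thus the cells of $\Delta$ contained in $H$ are exactly those indexed by $\Omega \subseteq \{n_1,\ldots,n_{t-1}\}$, and these match the cells of $\Delta \cap H$ in the induced CW decomposition of $H$. I do not expect any serious obstacle; the whole argument is essentially bookkeeping on ceiling lists, and the only mild subtlety is keeping the two roles of $n_t$ (defining $H$ and being removed from $\Sigma_1$) straight.
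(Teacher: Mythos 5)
Your argument follows essentially the same line as the paper's one-paragraph proof: both reduce everything to the ceiling-list description of chambers (Propositions~\ref{ConicModProp} and \ref{Equations}) and assert that the restrictions $n_1|_H,\ldots,n_{t-1}|_H$ are the primitive inward-pointing normals of $C\cap H$ with respect to the lattice $M\cap H$. This assertion is the crux, and neither you nor the paper justify it --- and in fact it is not true in general. Take $M = N = \ZZ^2$ with $\sigma$ having primitive generators $n_1 = (1,0)$ and $n_2 = (1,2)$, so that $C = \{v : v_1 \geq 0,\ v_1 + 2v_2 \geq 0\}$ and $R \cong \k[x,y,z]/(xz-y^2)$. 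For $H = \ker n_1 = \{v_1 = 0\}$ the lattice is $M\cap H = \ZZ\cdot(0,1)$, and $n_2|_H$ sends $(0,1) \mapsto 2$; thus $n_2|_H$ is \emph{twice} the primitive normal of $C\cap H$ in $\Hom(M\cap H,\ZZ)$. (The paper's own Figure~1 example exhibits the same phenomenon in different coordinates.)

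Because of this, the ``collapsing ceiling list'' step in your argument fails. For $v = (0,1/4)$ and $w = (0,3/4)$, the $R$-ceiling lists are $(0,1)$ and $(0,2)$, so $A_v^R \neq A_w^R$; but the $S$-ceilings, computed against the \emph{primitive} normal, are $\lceil 1/4\rceil = \lceil 3/4\rceil = 1$, so $A_v^S = A_w^S$. Correspondingly, the chambers $\Delta_v$ and $\Delta_w$ both meet $H$, with $\Delta_v\cap H = \{(0,t):0<t\le 1/2\}$ and $\Delta_w\cap H = \{(0,t):1/2<t\le 1\}$, each a proper piece of the single $S$-chamber $\{(0,t):0<t\le 1\}$. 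So the map $\Delta\mapsto\Delta\cap H$ does not even take values in the set of $S$-chambers, let alone biject onto it, and parts (1)--(3) as you have argued them do not go through. The missing hypothesis (implicit but unverified in both your write-up and the paper's) is that each $n_i|_H$ is primitive in $\Hom(M\cap H,\ZZ)$, equivalently that $\ZZ n_i + \ZZ n_t$ is a saturated sublattice of $N$; absent that, the restriction of the $R$-decomposition to $H$ is a strict refinement of the $S$-decomposition, and a different argument, or a restatement of the proposition, is needed.
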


\begin{proof}
  All statements follow immediately from the definitions and by applying
  Propositions
   \ref{ConicModProp},   \ref{OpenConic1},  and  \ref{Equations}, 
  describing chambers and cells in terms of the  elements of $\Sigma_1$. The point is that  
the restrictions to $H$ of the  linear functionals 
 $\{{n_1}, \dots, {n_{t-1}}\}$ in $\Sigma_1$ for $C$ is the set of primitive inward-pointing normals to $C\cap H$. 
\end{proof}

\section{Chain Complexes of Conic Modules}
Throughout this section, $R$ denotes a toric algebra corresponding to a pointed full-dimensional  rational polyhedral cone $C$ in $M_{\mathbb R} = M \otimes \mathbb R$.

Fix  a conic module $A_{\Delta}$ over $R$,  with  the  chamber of constancy $\Delta \subset M_{\mathbb R}$.  Our goal is to write down a complex $K_{\Delta}^{\bullet}$ of $R$-modules, abutting to  $A_{\Delta}$,  and  consisting only of direct sums of conic modules. In \S \ref{Section6}, we will construct from this complex a  projective  resolution of  a simple module over the endomorphism ring $\End_R(\mathbb A)$ where $\mathbb A$ is a sum of conic modules.  
 
 \subsection{The Chain Complex of a Chamber}
 Fix a chamber of constancy $\Delta$. For each open cell of $\Delta$, arbitrarily  fix some orientation. 
 Note that if  $\tau_i$ is a cell of $\Delta$ of codimension $ i \neq 0$, then  $\tau_i$ is contained in the boundary of 
 some  cell $\tau_{i-1}$ of codimension $i-1$ of $\Delta$. 
 Our choice of orientation on $\tau_{i-1}$ induces an orientation on its boundary components, including $\tau_{i}$, 
  which may or may not agree with our choice of orientation for $\tau_i$. 
 
We define the {\bf chain complex of the chamber $\Delta$} as the 
 complex $K_{\Delta}^{\bullet}$ of  $M$-graded $R$-modules
\begin{equation}\label{K}
\cdots 
\longrightarrow 
\bigoplus_{\stackrel{\text{cells }\tau \in \Delta}{\text{codim}(\tau)=2}} \mathring{A}_{\tau}\,\,\,
\longrightarrow 
\bigoplus_{\stackrel{\text{cells }\tau\in \Delta}{\text{codim}(\tau)=1}} \mathring{A}_{\tau} \,\,\,
\longrightarrow 
 \bigoplus_{\stackrel{\text{cells }\tau\in \Delta}{\text{codim}(\tau)=0}} \mathring{A}_{\tau} \,\,\,
  = A_{\Delta} \,\,\,
\end{equation}
where each map in the complex is a signed  sum of signed inclusion maps 
$ \mathring{A}_{\tau_{i+1}} \hookrightarrow \mathring{A}_{\tau_{i}}$  (whenever $\mathring{A}_{\tau_{i+1}} \subset  \mathring{A}_{\tau_{i}}$) and zero maps 
(whenever $\mathring{A}_{\tau_{i+1}} \not\subset \mathring{A}_{\tau_{i}}$). The sign is determined by our chosen orientations: the sign is 1 if our chosen orientation on $ \tau_{i+1}$ agrees with the orientation induced by our chosen  orientation on $\tau_{i}$ by virtue of  its being a boundary component of $\tau_{i}$, and the sign is $-1$ otherwise.

\begin{lemma}
$K_{\Delta}^{\bullet}$ is an $M$-graded complex of $R$-modules.
\end{lemma}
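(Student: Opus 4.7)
The plan is to verify two claims: each map in the complex is a well-defined $M$-graded $R$-module homomorphism, and any two consecutive differentials compose to zero.

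The first claim is essentially formal. Each open conic module $\mathring{A}_\tau$ is, by its very definition, an $M$-graded $R$-submodule of the Laurent ring $\k[M]$. Every nonzero component $\mathring{A}_{\tau_{i+1}} \hookrightarrow \mathring{A}_{\tau_i}$ of a differential is the inclusion of one such submodule in another, and is therefore $M$-graded and $R$-linear. Signed sums of $M$-graded $R$-linear maps are again $M$-graded and $R$-linear, so each differential is well-defined with target an $M$-graded $R$-module.

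For $d \circ d = 0$, the essential point is that $K_\Delta^\bullet$ is modeled on the cellular chain complex of the CW structure inside $\Delta$ described in Proposition~\ref{CellDecomp}. Using the parametrization in equation~\eqref{CellFormula}, the cells of $\Delta$ are indexed by subsets $\Omega \subseteq \Sigma_1$, and the codimension-one faces of $\tau_\Omega$ lying in $\Delta$ are precisely the cells $\tau_{\Omega \setminus \{n\}}$ for $n \in \Omega$. By Corollary~\ref{OpenCells}, these are exactly the cells $\tau'$ satisfying $\mathring{A}_{\tau'} \subset \mathring{A}_{\tau_\Omega}$, so the nonzero components of $d$ correspond precisely to the codimension-one incidence data of this cellular structure, with signs prescribed by the chosen orientations in the standard way.

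The computation of $d \circ d$ on a summand $\mathring{A}_{\tau_{\Omega''}}$ of codimension $k+2$ then reduces, for each codimension-$k$ cell $\tau_\Omega$ with $|\Omega \setminus \Omega''| = 2$, to summing signed inclusions through exactly the two intermediate cells $\tau_{\Omega'}$ (of codimension $k+1$) with $\Omega'' \subsetneq \Omega' \subsetneq \Omega$. Both composites agree as $R$-module maps with the natural inclusion $\mathring{A}_{\tau_{\Omega''}} \hookrightarrow \mathring{A}_{\tau_\Omega}$ inside $\k[M]$, but the two contributions acquire opposite signs under the standard boundary-orientation convention, giving cancellation. The main point that warrants verification is exactly this sign compatibility---that the orientation-induced signs defined in the preamble to the lemma produce the usual $\partial^2=0$ cancellation around a $2$-face---which is the classical cubical-complex calculation; with it in hand, $d \circ d = 0$ follows.
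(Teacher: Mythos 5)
Your first paragraph (well-definedness of the differentials as $M$-graded $R$-module maps) is correct, though the paper treats this as automatic.

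The argument for $d \circ d = 0$ has a genuine gap in the non-simplicial case. You treat the cells of $\Delta$ as a cubical complex: cells indexed by $\Omega \subseteq \Sigma_1$ with $\operatorname{codim}\tau_\Omega = |\Sigma_1 \setminus \Omega|$, codimension-one faces obtained by deleting exactly one element of $\Omega$, and exactly two intermediates between $\tau_{\Omega''}$ and $\tau_\Omega$ when $|\Omega\setminus\Omega''|=2$. This combinatorics is accurate only when $|\Sigma_1| = \operatorname{rank}(M)$, i.e., when $R$ is simplicial. When $|\Sigma_1| > \operatorname{rank}(M)$, the defining equalities $\langle x, n_j\rangle = d_j$ for $j \notin \Omega$ may be linearly dependent: some $\tau_\Omega$ are empty, and the codimension of a nonempty $\tau_\Omega$ is $\operatorname{rank}\{n_j : j \notin \Omega\}$, which can be strictly less than $|\Sigma_1 \setminus \Omega|$. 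In the octahedral chamber of Example~\ref{ConeOverSquare}, the unique vertex is $\tau_\emptyset$ with codimension $3$, not $|\Sigma_1| = 4$; all cells $\tau_\Omega$ with $|\Omega| = 1$ are \emph{empty}; and consequently the vertex $\tau_\emptyset$ is a codimension-\emph{one} face of the edges $\tau_\Omega$ with $|\Omega| = 2$, even though $|\Omega\setminus\emptyset| = 2$. Your claim that codimension-one faces are exactly the $\tau_{\Omega\setminus\{n\}}$ fails here, and the ``exactly two intermediate cells'' count that powers the cubical cancellation is no longer tied to the actual geometry: the set-difference $|\Omega\setminus\Omega''|$ and the codimension difference diverge, so the identification of which pairs $(\tau_{\Omega''}, \tau_\Omega)$ contribute to $d^2$ is off, and some of the ``intermediate'' $\tau_{\Omega'}$ one would sum over are empty.

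The paper's proof avoids this by reducing $d^2 = 0$ to the general fact that the cellular cochain complex of the CW complex $\overline\Delta$ squares to zero. The one new observation needed to justify the reduction is that the cells of $\Delta$ are closed upward under the face relation in $\overline\Delta$: if $\tau \subset \partial\tau'$ with $\tau$ a cell of $\Delta$, then $\tau'$ is also a cell of $\Delta$. This guarantees that restricting the cellular coboundary of $\overline\Delta$ to $\Delta$-cells reproduces the differential of $K^\bullet_\Delta$ without dropping any terms. That argument works uniformly, simplicial or not, and the needed cancellation around two-dimensional face gaps follows from the standard (diamond-property/CW) fact rather than from cubical combinatorics.
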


\begin{proof} We need only 
verify that $K_{\Delta}^{\bullet}$ is a complex, that is, that $d^2 = 0$. For this,  we need to follow just one monomial $x^m\in A_{\tau_{i+1}}$ in one summand of an arbitrary term 
$$ 
\bigoplus_{\stackrel{\text{cells }\tau\in \Delta}{\text{codim}(\tau)=i+1}} \mathring{A}_{\tau}$$
 of  $K_{\Delta}^{\bullet}.
$
Under $d$, this  monomial is sent to the list of signed monomials $\pm x^m$ and zeros in $
\bigoplus_{\stackrel{\text{cells }\tau \in \Delta}{\text{codim}(\tau)=i}} \mathring{A}_{\tau}$, according to whether or not  $\tau_{i+1} \subset \partial \tau_i$ (and if so, whether or not the orientations agree).  Applying $d$ again, the reader can check that the signed monomials come in pairs so cancel out to produce $d^2(x^m) = 0$.  Indeed, the calculation  is exactly the same as that to verify that the cellular cochain complex of the associated CW complex of $\overline{\Delta}$  is a complex; to see this note that if  $\tau$ and $\tau'$ are open cells of 
$\overline\Delta$ such that $\tau \subset \partial \tau'$, and 
 if $\tau$ is a cell of $\Delta$, then so is $\tau'$. Thus the differential of the complex  $K^{\bullet}_{\Delta}$ applied to some monomial $x^m$ in some summand $A_{\tau}$ is the same as it would be in the cellular cohomology complex of the closed cell complex $\overline\Delta$. 
See \cite{Munkres}.  \end{proof}

\begin{rem}
 The  chain complex $K_{\Delta}^{\bullet}$ has length  bounded by the vector space dimension of $M_{\mathbb R}$, which  (because $C$ is full dimensional) is the same as the Krull dimension of $ R$. In general, the length of $K_{\Delta}^{\bullet}$ is equal to the codimension of the smallest dimensional 
 cell appearing in $\Delta$. Example \ref{non-simplicial-conic-complex} shows that the  length can be strictly smaller than the dimension of $R$ because there can be chambers $\Delta$ with no
 zero-dimensional cells. 
  \end{rem}

\begin{ex} \label{non-simplicial-conic-complex}
 %\eleonore{Moved the first part of example \ref{non-simplicial} here}
Consider again the three-dimensional  toric algebra $R$ obtained from the cone over the square in $\mathbb R^3$ as in Example \ref{ConeOverSquare}. This non-simplicial toric ring admits three isomorphism types of conic modules, whose (representative) chambers are pictured below in Figure \ref{fig: chambersC}, with  boundary cells in each  chamber  colored darker. Note that the chamber of type (A) contains a cell of dimension zero (point) but the chambers of types (B) and (C) do not. 
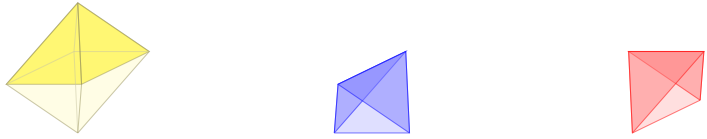
\begin{figure}[h!t]
\centering
\begin{subfigure}[b]{.3\textwidth}
\centering
\begin{tikzpicture}[baseline=(current bounding box.center)]
	\begin{scope}[opacity=.5]
	\node[coordinate] (000) at \ThreeD{0}{0}{0} {};
	\node[coordinate] (001) at \ThreeD{0}{0}{1} {};
	\node[coordinate] (101) at \ThreeD{1}{0}{1} {};
	\node[coordinate] (011) at \ThreeD{0}{1}{1} {};
	\node[coordinate] (111) at \ThreeD{1}{1}{1} {};
	\node[coordinate] (112) at \ThreeD{1}{1}{2} {};
	\draw[dyellow,fill=yellow!25] (000) to (101) to (112) to (011) to (000);
	\draw[dyellow,fill=yellow] (111) to (101) to (112) to (011) to (111);
	\draw[dyellow!50] (101) to (001) to (011);
	\draw[dyellow!50] (000) to (001) to (112);
	%\draw[dyellow] (101) to (111) to (011);
	\draw[dyellow] (000) to (111) to (112);
	\draw[dyellow] (111) to (112);
\end{scope}
\end{tikzpicture}
\caption{Chamber of module $A_0$}
\end{subfigure}
\begin{subfigure}[b]{.3\textwidth}
\centering
\begin{tikzpicture}[baseline=(current bounding box.center)]
\begin{scope}[opacity=.5]
	\node[coordinate] (011) at \ThreeD{0}{1}{1} {};
	\node[coordinate] (111) at \ThreeD{1}{1}{1} {};
	\node[coordinate] (112) at \ThreeD{1}{1}{2} {};
	\node[coordinate] (122) at \ThreeD{1}{2}{2} {};
	\draw[blue,fill=blue!25] (011) to (111) to (112) to (122) to (011);
	\draw[blue,fill=blue!50] (011) to (122) to (112) to (011);
	\draw[blue,fill=blue!50] (111) to (122) to (112) to (111);
\end{scope}
\end{tikzpicture}
\caption{Chamber of module $A_1$}
\end{subfigure}
\begin{subfigure}[b]{.3\textwidth}
\centering
\begin{tikzpicture}[baseline=(current bounding box.center)]
\begin{scope}[opacity=.5]
	\node[coordinate] (101) at \ThreeD{1}{0}{1} {};
	\node[coordinate] (111) at \ThreeD{1}{1}{1} {};
	\node[coordinate] (112) at \ThreeD{1}{1}{2} {};
	\node[coordinate] (212) at \ThreeD{2}{1}{2} {};
	\draw[red,fill=red!25] (101) to (111) to (112) to (212) to (101);
	%\draw[red,fill=red!50] (101) to (111) to (112) to (212) to (101);
	\draw[red,fill=red!50] (101) to (112) to (212) to (101);
	\draw[red,fill=red!50] (111) to (112) to (212) to (111);
%	\draw[red!50] (101) to (112);
%	\draw[red] (111) to (212);
\end{scope}
\end{tikzpicture}
\caption{Chamber of module $A_2$}
\end{subfigure}
\caption{The three types of chambers with shaded boundary cells.}
\label{fig: chambersC}
\end{figure}
Therefore the complexes $K^{\bullet}_{\Delta}$ corresponding to chambers of type (A) will have length three while the length of the complexes corresponding to any other chamber will have length strictly less. 
The complex $K^{\bullet}_{\Delta}$  corresponding to the first type of chamber is 
\[ A_0 \longrightarrow  A_0^{\oplus4}  \longrightarrow A_1^{\oplus2}\oplus A_2^{\oplus2}  \longrightarrow A_0, \]
with   $M$-graded structure  and mappings given by signed inclusions between chambers, as given in Figure \ref{fig: complexC0}.

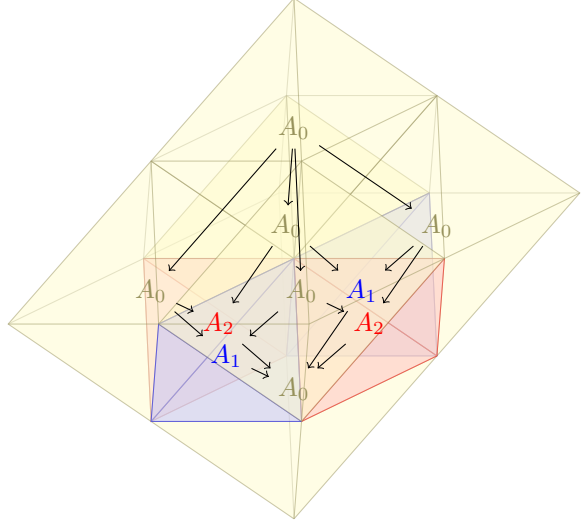
\begin{figure}[h!t]
\begin{tikzpicture}[scale=2]
	\ychamber[\ThreeD{0}{0}{1}]
	%\rchamber[\ThreeD{0}{0}{1}]
	%\bchamber[\ThreeD{0}{0}{1}]
	\rchamber[\ThreeD{-1}{0}{0}]
	\bchamber[\ThreeD{0}{-1}{0}]
	\ychamber[\ThreeD{0}{0}{0}]
	\ychamber[\ThreeD{1}{0}{1}]
	\ychamber[\ThreeD{0}{1}{1}]
	\ychamber[\ThreeD{1}{1}{2}]
	\rchamber[\ThreeD{0}{0}{0}]
	\bchamber[\ThreeD{0}{0}{0}]
%	\rchamber[\ThreeD{0}{1}{1}]
%	\bchamber[\ThreeD{1}{0}{1}]
	\ychamber[\ThreeD{1}{1}{1}]
	
	\node[dyellow] (0) at \ThreeD{.5}{.5}{1} {$A_0$};
	\node[blue] (1a) at \ThreeD{.75}{1.25}{1.5} {$A_1$};
	\node[red] (1b) at \ThreeD{.25}{.75}{1.5} {$A_2$};
	\node[blue] (1c) at \ThreeD{.75}{.25}{1.5} {$A_1$};
	\node[red] (1d) at \ThreeD{1.25}{.75}{1.5} {$A_2$};
	\node[dyellow] (2d) at \ThreeD{1.5}{.5}{2} {$A_0$};
	\node[dyellow] (2b) at \ThreeD{.5}{1.5}{2} {$A_0$};
	\node[dyellow] (2c) at \ThreeD{.5}{.5}{2} {$A_0$};
	\node[dyellow] (2a) at \ThreeD{1.5}{1.5}{2} {$A_0$};
	\node[dyellow] (3) at \ThreeD{1.5}{1.5}{3} {$A_0$};

	\draw[->] (1a) to (0);
	\draw[->] (1b) to (0);
	\draw[->] (1c) to (0);
	\draw[->] (1d) to (0);
	\draw[->] (2a) to (1a);
	\draw[->] (2a) to (1d);
	\draw[->] (2b) to (1b);
	\draw[->] (2b) to (1a);
	\draw[->] (2c) to (1c);
	\draw[->] (2c) to (1b);
	\draw[->] (2d) to (1d);
	\draw[->] (2d) to (1c);
	\draw[->] (3) to (2a);
	\draw[->] (3) to (2b);
	\draw[->] (3) to (2c);
	\draw[->] (3) to (2d);
\end{tikzpicture}
\caption{The complex of an octahedral chamber}
\label{fig: complexC0}
\end{figure}
The complex of $R$-modules corresponding to the latter two types of chambers is 
\[ A_2 \longrightarrow A_0^{\oplus2} \longrightarrow A_1 \]
\[ A_1 \longrightarrow A_0^{\oplus2} \longrightarrow A_2, \]
with $M$-graded structure described by signed inclusions between chambers, as given in Figure \ref{fig: complexC12}.
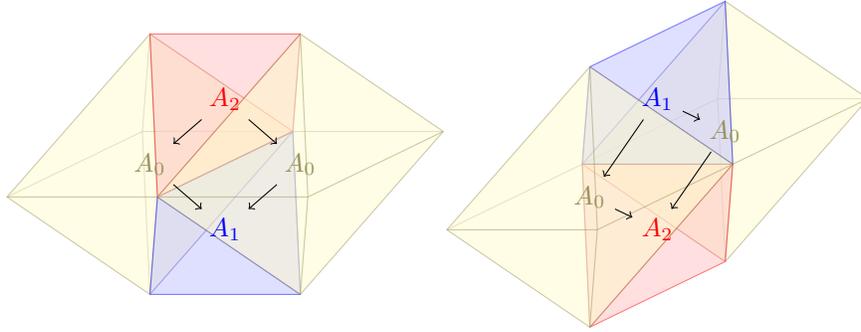
\begin{figure}[h!t]
\begin{subfigure}{.45\textwidth}
\centering
\begin{tikzpicture}[scale=2]
	\ychamber[\ThreeD{0}{1}{1}]
	\bchamber[\ThreeD{0}{0}{0}]
	\rchamber[\ThreeD{0}{1}{1}]
	\ychamber[\ThreeD{1}{1}{1}]
	
	\node[blue] (0) at \ThreeD{.75}{1.25}{1.5} {$A_1$};
	\node[dyellow] (1a) at \ThreeD{.5}{1.5}{2} {$A_0$};
	\node[dyellow] (1b) at \ThreeD{1.5}{1.5}{2} {$A_0$};
	\node[red] (2) at \ThreeD{1.25}{1.75}{2.5} {$A_2$};

	\draw[->] (1a) to (0);
	\draw[->] (1b) to (0);
	\draw[->] (2) to (1a);
	\draw[->] (2) to (1b);
\end{tikzpicture}
\end{subfigure}
\begin{subfigure}{.45\textwidth}
\centering
\begin{tikzpicture}[scale=2]
	\ychamber[\ThreeD{1}{0}{1}]
	\bchamber[\ThreeD{1}{0}{1}]
	\rchamber[\ThreeD{0}{0}{0}]
	\ychamber[\ThreeD{1}{1}{1}]
	
	\node[red] (0) at \ThreeD{1.25}{.75}{1.5} {$A_2$};
	\node[dyellow] (1a) at \ThreeD{1.5}{.5}{2} {$A_0$};
	\node[dyellow] (1b) at \ThreeD{1.5}{1.5}{2} {$A_0$};
	\node[blue] (2) at \ThreeD{1.75}{1.25}{2.5} {$A_1$};

	\draw[->] (1a) to (0);
	\draw[->] (1b) to (0);
	\draw[->] (2) to (1a);
	\draw[->] (2) to (1b);
\end{tikzpicture}
\end{subfigure}
\caption{The complexes of tetrahedral chambers}
\label{fig: complexC12}
\end{figure}
\end{ex}

\subsection{The Acyclicity Lemma.}

Our eventual goal will be to apply the functor $\Hom_R(\mathbb A, -)$ to complexes of the form $K_{\Delta}^{\bullet}$ in order to construct  minimal projective resolutions of simple modules over 
the endomorphism ring  $\End_R(\mathbb A),$ where $\mathbb A$ is a finite sum of conic modules. 
 The following {\bf Acyclicity Lemma}  is crucial.

\begin{lemma}[The Acyclicity Lemma]\label{acyclicity}
Fix a toric algebra $R$ coming from a  pointed full dimensional rational polyhedral  cone $C \subset M_{\mathbb R}$. 
Let $A_\Delta$ and $A_{\Delta'}$ be two conic modules for $R$, with associated  chambers of constancy $\Delta$ and $\Delta'$.
\begin{enumerate}
	\item The complex  $\Hom_{R}(A_{\Delta'},K_{\Delta}^\bullet)$  is    {\it exact} if $A_\Delta \not\simeq A_{\Delta'}$; 
	\item If $A_\Delta \simeq A_{\Delta'}, $ then  $\Hom_{R}(A_{\Delta'},K_{\Delta}^\bullet)$ has one-dimensional homology, which appears (only) in  homological degree zero and  
	 $M$-degree $m$, where $m$ is the unique element of $M$  such that $\Delta = m + \Delta'$.
\end{enumerate}
\end{lemma}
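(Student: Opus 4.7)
The plan is to decompose the complex $\Hom_R(A_{\Delta'}, K_\Delta^\bullet)$ according to its natural $M$-grading and analyze each $m$-graded piece separately for $m \in M$. By Proposition \ref{ConicModProp}(3), applied via Proposition \ref{OpenConic1} to the open conic modules, each $\Hom_R(A_{\Delta'}, \mathring{A}_\tau)_m$ is one-dimensional over $\k$ if $A_{\Delta'+m} \subset \mathring{A}_\tau$ and zero otherwise. Comparing the halfspace descriptions of these modules shows this inclusion is equivalent to the cell $\tau$ being contained in the open polyhedral region
\[
U_m \;:=\; \bigcap_{n_j \in \Sigma_1} \bigl\{\, x \in M_\mathbb{R} : \langle x, n_j\rangle < d_j\,\bigr\}, \qquad d_j := \lceil \langle v_0, n_j\rangle\rceil \ \text{for any}\ v_0 \in \Delta' + m.
\]
Thus the $m$-graded piece is a complex of $\k$-vector spaces indexed by codimension, with one $\k$-summand for each cell of $\Delta$ lying in $U_m$ and differential inherited from the signed cellular inclusions of $K_\Delta^\bullet$.

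Writing $\delta_j := d_j^\Delta$ and using Proposition \ref{Equations}, a direct comparison shows $U_m \cap \Delta$ is nonempty if and only if $d_j \geq \delta_j$ for all $j$, equivalently $\Delta' + m \preceq \Delta$ (Remark \ref{Order}). When this fails, the $m$-graded complex vanishes. When it holds, partition $\Sigma_1 = J_= \sqcup J_+$ with $J_= := \{j : d_j = \delta_j\}$ and $J_+ := \{j : d_j > \delta_j\}$; the cell formula \eqref{CellFormula} then yields that the cells of $\Delta$ lying in $U_m$ are exactly those $\tau_\Omega$ with $J_= \subseteq \Omega$. If additionally $J_+ = \emptyset$ (i.e., $\Delta' + m = \Delta$), only the top-dimensional open cell of $\Delta$ contributes, and the complex reduces to a single $\k$ in homological degree~$0$.

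The main obstacle is proving acyclicity when $J_+ \neq \emptyset$ (i.e., $\Delta'+m \prec \Delta$ strictly). The approach is to fix any $j_0 \in J_+$ and construct an explicit chain homotopy
\[
h(\tau_\Omega) \;=\; \begin{cases} \pm\, \tau_{\Omega \setminus \{j_0\}} & \text{if } j_0 \in \Omega \text{ and } \tau_{\Omega \setminus \{j_0\}} \text{ is a cell of } \Delta,\\ 0 & \text{otherwise,} \end{cases}
\]
with signs chosen to match the cellular boundary conventions. A direct calculation, mirroring the classical acyclicity of the augmented simplicial chain complex of a simplex on vertex set $J_+$, gives $dh + hd = \pm \,\mathrm{id}$. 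The delicate point is the non-simplicial case, where some of the cells $\tau_{\Omega\setminus\{j_0\}}$ may fail to exist in $\Delta$; there one checks that missing contributions cancel pairwise using the linear dependencies among the defining normals, or alternatively, one observes that $U_m \cap \Delta$ is a non-empty convex half-open polyhedron (hence contractible) and derives acyclicity from a deformation-retraction argument for its cellular homology.

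Combining everything with Proposition \ref{isom}, which states that $A_\Delta \simeq A_{\Delta'}$ if and only if there is a (necessarily unique) $m \in M$ with $\Delta = \Delta' + m$: if no such $m$ exists, the case $\Delta'+m = \Delta$ never arises and every $m$-graded piece is zero or acyclic, yielding part (1). If such $m$ exists, that unique $m$ contributes a single $\k$ in homological degree~$0$ and $M$-degree~$m$, while every other $m' \in M$ falls into the zero or acyclic case, yielding part~(2).
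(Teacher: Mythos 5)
Your reduction to $M$-graded pieces and your identification of exactly which cells of $\Delta$ contribute to the degree-$m$ piece (those $\tau_\Omega$ with $J_= \subseteq \Omega$) is correct, and agrees with Lemma~\ref{DegreeZeroComplex} in the paper. The handling of the cases $\Delta'+m\not\preceq\Delta$ (zero complex) and $\Delta'+m=\Delta$ (a single $\k$ in homological degree $0$), together with the appeal to Proposition~\ref{isom} to identify the unique $m$ with $\Delta=\Delta'+m$, is also fine.

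The gap is exactly where the real content of the lemma lies: acyclicity when $\Delta'+m\prec\Delta$ strictly. Your chain homotopy $h(\tau_\Omega)=\pm\tau_{\Omega\setminus\{j_0\}}$ is the standard Koszul (cube) contraction, and it does prove acyclicity \emph{when $R$ is simplicial}, since there every $\tau_\Omega$ with $J_=\subseteq\Omega\subseteq\Sigma_1$ is a genuine nonempty cell and the complex is literally a Koszul complex in the variables indexed by $J_+$. But in the non-simplicial case the contributing cells do not form a Boolean lattice (in Example~\ref{ConeOverSquare} the tetrahedral chambers have only a single codimension-two cell, not $\binom{4}{2}$), so $\tau_{\Omega\setminus\{j_0\}}$ can be empty and the identity $dh+hd=\pm\mathrm{id}$ fails termwise. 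You acknowledge this but resolve it only by ``one checks that missing contributions cancel pairwise'' or ``one observes \ldots contractible \ldots and derives acyclicity from a deformation-retraction argument,'' neither of which is a proof. The topological version is also subtler than stated: the complex is not the cellular chain complex of a CW structure on the half-open set $U_m\cap\Delta$, so contractibility of $U_m\cap\Delta$ does not formally imply acyclicity. Correctly interpreted, the degree-$m$ complex is the relative cellular cochain complex $C^\bullet(\overline\Delta, L^*;\k)$, where $L^*$ is the closed subcomplex spanned by the omitted cells; via the long exact sequence of the pair (using that $\overline\Delta$ is a ball) its acyclicity is equivalent to $L^*$ being acyclic, and that requires an argument you do not give.

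The paper sidesteps the topology entirely with an algebraic induction on $\deg\Delta-\deg\Delta'$ (after reducing to $M$-degree $0$ and $\Delta'\prec\Delta$): the base case is the adjacent case of Lemma~\ref{Covering}, where the complex collapses to an isomorphism $\k\to\k$; for the inductive step it interpolates a chamber $\Delta''$ with $\Delta\succ\Delta''\succ\Delta'$ and $\Delta''\succ\Delta'$ saturated, and builds a short exact sequence of complexes whose quotient is identified (Lemma~\ref{lemma1}, Proposition~\ref{Restrict}) with the corresponding complex over the facet algebra $S=R/P_{n_t}$, a lower-dimensional toric algebra. Both sub- and quotient complexes are then acyclic by induction, hence so is the middle. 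To repair your write-up you would either need to carry out one of your two sketched arguments in full (and the second requires more than contractibility), or replace them with an induction of this kind.
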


\begin{proof}[Proof of the Acyclicity Lemma]
Because the complex is $M$-graded,  it suffices to prove acyclicity in each degree $m \in M.$
It is enough to prove the Acyclicity Lemma only in degree $m = 0\in M$, because of the 
 identification of $\k$-vector spaces
$$
[\Hom_R(A_{\Delta'}, A_{\tilde \Delta})]_m = [\Hom_R(x^mA_{\Delta'}, A_{\tilde \Delta})]_0 =  [\Hom_R(A_{\Delta' + m}, A_{\tilde \Delta})]_0
$$
for all $m\in M$ and all chambers $\Delta'$, $\tilde \Delta$. 

We prove the Acyclicity Lemma in $M$-degree $0$ in  two cases:
\begin{enumerate}
\item[(i)]  $\Delta = \Delta'$
\item[(ii)] $\Delta \neq \Delta'$
\end{enumerate}

Case (i):  Since $\Delta = \Delta'$ and all the modules appearing in the terms of $K^{\bullet}_{\Delta}$ are of the form $\mathring A_{\tau},$ where $\tau$ is a cell of $\Delta$, we have
$$
A_{\Delta} \supset \mathring A_{\tau};
$$
 the inclusion is {\it proper} unless $\tau$ has codimension zero, in which case $ A_{\Delta} = \mathring A_{\tau}.$
So there are no degree preserving maps $A_{\Delta'} = A_{\Delta} \rightarrow  \mathring A_{\tau}$ except in homological degree zero. That is, the entire chain complex
$[\Hom_R(A_{\Delta'}, K^{\bullet}_{\Delta})]_0$ reduces to the zero complex, except in homological degree zero, where we have $[\Hom_R(A_{\Delta}, A_{\Delta})]_0 = \k$.
This completes the proof of Case (i).

Case (ii): Suppose   $\Delta \neq \Delta'$.  To show that the complex $[\Hom_{R}(A_{\Delta'},K_{\Delta}^\bullet)]_0$  is exact, we first examine this complex of $\k$-vector spaces carefully. 
From Proposition \ref{ConicModProp}(3), we have that 
\begin{equation}\label{4}
\left[\Hom_{R}(A_{\Delta'}, A_{\tilde \Delta})\right]_0 =
 \left\{\begin{array}{cc}
	\mathsf{k} & \text{if} \,\, A_{\Delta'}\subset A_{\tilde \Delta} \\
	0& \text{otherwise.}
	\end{array}\right.
\end{equation}
This  implies 
\begin{lemma}\label{DegreeZeroComplex} 
The zero-th graded piece  $\left[\Hom_R(A_{\Delta'}, K^{\bullet}_{\Delta})\right]_0$ of the $M$-graded complex $\Hom_{R}(A_{\Delta'},K_{\Delta}^\bullet)$ is 
$$
\cdots 
  \longrightarrow 
 \bigoplus_ {\tiny{\begin{array}{c}
{\text{cells }}\tau\in \Delta \\
{\text{codim}}(\tau)=2\\
A_{\Delta'} \subset \mathring A_{\tau} 
  \end{array}} }\k \,\,\,\,\,\,\,\,\,\,  
  \longrightarrow 
 \bigoplus_ {\tiny{\begin{array}{c}
{\text{cells }}\tau\in \Delta \\
{\text{codim}}(\tau)=1\\
A_{\Delta'} \subset \mathring A_{\tau} 
  \end{array}} } \k  \,\,\,\,\,\,\,\,\,\,
  \longrightarrow 
 \bigoplus_
  {\tiny{\begin{array}{c}
{\text{cells }}\tau\in \Delta \\
{\text{codim}}(\tau)=0\\
A_{\Delta'} \subset \mathring A_{\tau} 
  \end{array}} }
   \k  \,\,\,\,\,\,\,\,\,\,
$$
consisting only of those summands indexed by cells  $\tau$ of $\Delta$ for which $A_{\Delta'} \subset  \mathring A_{\tau}$. 
\end{lemma}
Proceeding with the proof of Case (ii), observe that  without loss of generality, we may assume $A_{\Delta} \supset A_{\Delta'}$, because otherwise, there are no degree-preserving maps
$A_{\Delta'} \rightarrow \mathring A_{\tau}$ for $\tau \subset \Delta$,  so  the entire complex is zero. So fix $\Delta$ and $\Delta'$ with $\Delta' \prec \Delta$. We  induce on the quantity $$
\deg \Delta - \deg\Delta' \ ,
$$
where $\deg \Delta$ denotes the degree of the chamber $\Delta$ as defined in Definition \ref{Degree}. Our assumption that $\Delta' \prec \Delta$ but 
$\Delta \neq \Delta'$ ensures that this difference is at least 1. 

\subsubsection{Base Case of Induction}
Assume $\deg \Delta - \deg\Delta' = 1. $  By Lemma \ref{Covering}, it follows that $\Delta' \prec \Delta$ is a saturated chain in the poset of chambers, and  $\Delta$ and $\Delta'$ are on opposite sides of a supporting  hyperplane 
$\mathcal H$ of $\Delta$.
 This means that $A_{\Delta'} = \mathring A_{\tau}$ where $\tau$ is the interior cell of the facet of $\Delta\cap \mathcal H$.  So 
 $A_{\Delta'}$ is a submodule of $A_{\Delta} $ and of $ \mathring A_{\tau}$ but not of any other  $ \mathring A_{\tau'}$ appearing in the chain complex
$K_{\Delta}^\bullet$. So $[\Hom_{R}(A_{\Delta'},K_{\Delta}^\bullet)]_0$ degenerates to the complex with non-zero terms only in homological degrees $0$ and $1$:
$$
[\Hom_{R}(A_{\Delta'}, A_{\Delta} )]_0 \longrightarrow [\Hom_{R}(A_{\Delta'},A_{\Delta'})]_0
$$
both of which are $\k$. This complex is the isomorphism $\k \longrightarrow \k,$ hence exact.

\subsubsection{Inductive Step}
We now fix $d>0$ and assume that we have proved the Acyclicity Lemma in degree $0\in M$ on {\it all } toric varieties and all  chambers  $\Delta' \prec \Delta$ satisfying 
$\deg \Delta - \deg\Delta' < d$.

Given  $\Delta \succ \Delta'$ with $\deg \Delta - \deg\Delta' = d\geq 2$,  use Lemma \ref{Covering} to find a chamber $\Delta''$ such that 
$$
\Delta \succ \Delta''  \succ \Delta'
$$
with the chain $\Delta'' \succ\Delta'$  {\it saturated}. Let $\mathcal H$ be the supporting hyperplane separating the chambers  $\Delta'' $ and $ \Delta'.$

Consider the inclusion of conic modules  $A_{\Delta'} \subset A_{\Delta''}$. The cokernel is annihilated by the ideal $P_{n_t}$ generated by monomials $x^m$ such that 
$\langle m, n_t \rangle > 0$, where $n_t$ is the inward-pointing normal of $\mathcal H$. Thus the cokernel is naturally  a module over the ring $R/P_{n_t}$, the 
toric algebra  $S$ of the cone $C\cap H$ in the subspace $H$, the supporting hyperplane of $C$ with inward pointing normal $n_t$ (parallel to $\mathcal H$).  We claim that this cokernel complex can be understood as the complex of a chamber of constancy over the toric algebra $S$: 

\medskip
\noindent
{\bf Claim:} {\it 
There is a short exact sequence of $M$-graded complexes
$$
0 \longrightarrow
 \Hom_{R} ( A_{\Delta''},K_\Delta^{\bullet}) \hookrightarrow \Hom_{R} (A_{\Delta'},K_\Delta^{\bullet})
 \longrightarrow 
  \Hom_{S} ( A_{\delta''},K_\delta^{\bullet})[1]
   \longrightarrow  0 
$$
where $S$ is the toric algebra of the cone $C\cap H$ and $\delta'', \delta \subset H$ are the  chambers of constancy for 
$S$-modules  as described by Proposition \ref{Restrict}: 
$$
\delta :=  (\Delta-m) \cap  H \,\,\,\,\,\,\,\,\,\delta'' :=  (\Delta''-m) \cap H. 
$$
Here  $m\in \mathcal H\cap M$ is any lattice point used to translate $\mathcal H$ to the subspace $H$, and the  notation $C^{\bullet}[1]$ denotes a homological shift of the complex  $C^{\bullet}.$}

\medskip

Before establishing the claim, we note its sufficiency to complete the proof of  the Acyclicity Lemma. Indeed, since both $\deg \Delta-\deg \Delta''$ and $\deg \delta-\deg \delta''$ are precisely one less than  $\deg \Delta-\deg \Delta',$  the inductive assumption implies that both complexes
$  \Hom_{R} ( A_{\Delta''},K_\Delta^{\bullet})$ and $ \Hom_{S} ( A_{\delta''},K_\delta^{\bullet})$ are exact in degree zero.  Whence so is the complex in the middle. 

To prove the claim, first note that 
by restricting any map 
$A_{\Delta''} \rightarrow K^{\bullet}_{\Delta}$ to $A_{\Delta'}$, we get an inclusion of $M$-graded complexes
of $R$-modules
$$
 \Hom_{R} ( A_{\Delta''},K_\Delta^{\bullet}) \hookrightarrow \Hom_{R} (A_{\Delta'},K_\Delta^{\bullet}).
$$
The cokernel  $\mathcal C^{\bullet}$  is annihilated by the prime ideal 
$P_{n_t}$ and hence is  a complex of modules over the toric algebra $S$.  Its degree zero subcomplex is
\begin{equation}\label{DegZeroCokernel}
\cdots 
 \longrightarrow 
\bigoplus_ {\tiny{\begin{array}{c}
{\text{cells }}\tau\in \Delta \\
{\text{codim}}(\tau)=2\\
A_{\Delta'} \subset \mathring A_{\tau} \\
\tau \subset \mathcal H
  \end{array}} }  \k \,\,\,\,\,\,\,\,\,\, 
 \longrightarrow 
\bigoplus_ {\tiny{\begin{array}{c}
{\text{cells }}\tau\in \Delta \\
{\text{codim}}(\tau)=1\\
A_{\Delta'} \subset \mathring A_{\tau} \\
\tau \subset \mathcal H
  \end{array}} }  \k  \,\,\,\,\,\,\,\,\,\,
 \longrightarrow 
  \bigoplus_
  {\tiny{\begin{array}{c}
{\text{cells }}\tau\in \Delta \\
{\text{codim}}(\tau)=0\\
A_{\Delta'} \subset \mathring A_{\tau} \\
\tau \subset \mathcal H
  \end{array}} } \k  \,\,\,\,\,\,\,\,\,\,
\end{equation}
because of the following lemma:

\begin{lemma}\label{lemma1}
Let $\Delta, \Delta',\Delta''$ be chambers of constancy such that $\Delta\succ \Delta''\succ\Delta'$, with $\Delta'$ and $\Delta''$ adjacent on either side of hyperplane $\mathcal H$. Let $\tau$ be an open cell of $\Delta$.
The natural map 
$$
\left[\Hom_{R}(A_{\Delta''}, \mathring A_{\tau})\right]_0 \longrightarrow \left[\Hom_{R}(A_{\Delta'}, \mathring A_{\tau})\right]_0
$$
induced by restricting maps from $A_{\Delta''}$ to the submodule $A_{\Delta'}$ is either an isomorphism (when $\tau \not\subset \mathcal H$) or the zero map 
(when $\tau \subset \mathcal H$) because the source is zero.
\end{lemma}

\begin{proof} The statement amounts to saying that 
an inclusion $A_{\Delta'} \subset \mathring A_{\tau}$
fails to factor as
$$
A_{\Delta'} \subset  A_{\Delta''}  \subset  \mathring A_{\tau}
$$
if and only if $\tau \subset \mathcal H\cap \Delta.$ 

To see this, note that because $\Delta'$ and $\Delta''$ lie on opposite sides of the single separating hyperplane $\mathcal H$, 
a  monomial $x^m$ is in $ A_{\Delta''}$ but not $ A_{\Delta'}$  if and only if $m\in \mathcal H\cap M$.
So any inclusion  $A_{\Delta'}\subset \mathring A_{\tau}$ will extend to  $A_{\Delta''}$
 unless some $m\in \mathcal H\cap M$ is not in $\mathring C+v$ where $v\in \tau$. This happens if and only if $\mathcal H$ is a supporting hyperplane of the translated cone $C+v$, that is, if and only if 
 $v\in \mathcal H$.   
\end{proof}

Finally, we need to show that the cokernel complex $[\mathcal C^{\bullet}]_0$ is, up to homological shift, the same the complex of conic modules over the toric algebra
 $S = \k[(C\cap H)\cap (M\cap H)]$ described in the claim.
  Pick any $m\in \mathcal H\cap M$, and consider the translation
 $$M_{\mathbb R} \rightarrow M_{\mathbb R} \,\,\,\,\,\,\, {\text{sending}} \,\,\,\,\,\,\, x\mapsto x- m.
 $$
 This map translates $\mathcal H$ to $H$, and respects the partial ordering on  chambers as well as the CW decomposition of $M_{\mathbb R}.$ In particular, 
 the chambers $\Delta \succ \Delta'' \succ \Delta'$ are  translated to chambers
  $\delta \succ \delta'' \succ \delta'$, and every open cell of $\Delta$ is translated to a corresponding open cell of $\delta$. 
 Intersecting with the subspace $H$, we have
 $$
   \delta \cap H  \succ \delta''  \cap H \succ \delta' \cap H = \emptyset
   $$
   and now the chambers $  \delta \cap H  \succ \delta''  \cap H$ are chambers of constancy for the conic modules over $S$ by Proposition \ref{Restrict}; likewise the
   translations of the open cells $\tau \in \mathcal H$ become the open cells of the conic-induced CW decomposition of $H$. Thus the cokernel complex 
   $\left[\mathcal C^{\bullet}\right]_0$ from (\ref{DegZeroCokernel}) above can be identified with the zero-th graded piece of 
   the complex of $S$-modules 
   $$
   \Hom_{S}(A_{\delta''}, K^{\bullet}_{\delta}). 
   $$
 The proof is complete. 
\end{proof}
\section{Global Dimension of Endomorphism Algebras}   \label{Section6}

Throughout this section, $R$ denotes a toric algebra defined by a pointed full dimensional cone $C\subset M_{\mathbb R}$. 
For  any ring $\Lambda$, the {\bf{global dimension} } $\gldim(\Lambda)$ is the maximum projective dimension ({possibly infinite}) of any left or right $\Lambda$-module.

A fundamental theorem of  Auslander-Buchsbaum and Serre states that a {\it commutative} Noetherian local ring $\Lambda$ has finite global dimension if and only if it is regular. When $\Lambda$ is not regular, it may still admit a finitely generated faithful module $M$ 
 such that the ring  $\mathrm{End}_{\Lambda}(M)$ has finite global dimension. Such an endomorphism ring is called a {\bf non-commutative resolution of singularities } of the commutative ring $\Lambda$.

The main result of this section is a constructive proof that toric rings admit natural non-commutative resolutions:

\begin{thm}\label{gldim}
Let $\M = A_{\Delta_1} \oplus \cdots \oplus A_{\Delta_s}$, where the $A_{\Delta_i}$ run through each isomorphism class of conic $R$-modules. Then the global dimension of 
$\mathrm{End}_R(\M)$ is equal to the Krull dimension of $R$.
\end{thm}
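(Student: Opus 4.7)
Let $\Lambda := \End_R(\M)$ and let $e_i \in \Lambda$ be the idempotent projecting onto the summand $A_{\Delta_i}$, so that $e_i \Lambda \cong \Hom_R(\M, A_{\Delta_i})$ realizes the indecomposable projective right $\Lambda$-modules. The $M$-grading on $\M$ induces an $M$-grading on $\Lambda$; the corresponding graded simple right $\Lambda$-modules $S_{i,m}$ are parametrized by pairs $(i,m) \in \{1,\dots,s\} \times M$, and by Proposition \ref{isom} these pairs correspond bijectively with chambers of constancy $\Delta \subset M_{\mathbb R}$ via $\Delta \leftrightarrow (i,m)$, where $A_\Delta \simeq A_{\Delta_i}$ and $\Delta = \Delta_i + m$.

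The key step is to apply the $M$-graded functor $\Hom_R(\M,-)$ to each chain complex $K_\Delta^\bullet$ from Section~5. Every term of $K_\Delta^\bullet$ is a direct sum of open conic modules, and by Proposition \ref{OpenConic1} each such module is isomorphic (up to an $M$-shift) to some $A_{\Delta_j}$; hence $\Hom_R(\M, K_\Delta^\bullet)$ is a finite complex of projective right $\Lambda$-modules. The Acyclicity Lemma, applied summand by summand to $\M = \bigoplus_j A_{\Delta_j}$, shows that exactly one summand contributes homology, that this homology lives in homological degree zero and in a single $M$-grade, and that it is one-dimensional: it is precisely the graded simple $S_\Delta$. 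Since $K_\Delta^\bullet$ has length at most $\dim M_{\mathbb R} = \dim R$, every graded simple $\Lambda$-module admits a finite projective resolution of length at most $\dim R$.

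To promote this bound on simples to a bound on the full global dimension, I use standard graded homological arguments. The algebra $\Lambda$ is module-finite over the Noetherian commutative ring $R$ and has only finitely many graded simples up to $M$-shift, so any finitely generated graded $\Lambda$-module $N$ admits a finite filtration with graded-simple subquotients; an induction using the horseshoe lemma then yields $\pd_\Lambda N \leq \dim R$. The graded and ungraded global dimensions of $\Lambda$ coincide by standard compatibility arguments for module-finite $M$-graded algebras, giving $\gldim \Lambda \leq \dim R$.

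For the lower bound, I focus on the chamber $\Delta_0$ containing the origin, so that $A_{\Delta_0} = R$. Because $\sigma$ is pointed, the primitive normals $\Sigma_1$ span $N_{\mathbb R}$, so $\{0\}$ is a $0$-dimensional cell of $\Delta_0$ and $K_{\Delta_0}^\bullet$ has length exactly $\dim R$. The resulting resolution of $S_{\Delta_0}$ is minimal because every differential is a signed sum of strict inclusions between distinct open conic modules, and these—by Proposition \ref{ConicModProp}(3)—induce only maps between distinct indecomposable projectives, living in the graded Jacobson radical of $\Lambda$ and hence vanishing on the top. Thus $\pd S_{\Delta_0} = \dim R$, giving $\gldim \Lambda \geq \dim R$. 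The main technical obstacle I anticipate is the bootstrapping step from graded simples to all $\Lambda$-modules, which requires careful handling of the $M$-grading because $M$ is a free abelian group rather than a $\mathbb{Z}_{\geq 0}$-graded connected situation; a secondary obstacle is the minimality claim, which I expect to fall out of Proposition \ref{ConicModProp}(3) but does require a careful accounting of the degree-zero pieces of the relevant $\Hom$-groups.
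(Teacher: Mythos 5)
Your opening analysis is sound and matches the paper's: applying $\Hom_R(\M,-)$ to the conic complex $K^\bullet_\Delta$ produces, by the Acyclicity Lemma, a finite projective resolution of the graded simple $S_\Delta$ of length at most $\operatorname{rank}(M)$, and the minimality argument for the chamber $\Delta_0$ of $R$ (whose unique $0$-cell is the origin) gives the lower bound $\gldim \End_R(\M)\geq \operatorname{rank}(M)$. The paper packages this last point as Proposition \ref{Ext} and Corollary \ref{pdim}, but your radical-valued-differential argument is essentially the same idea.

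The genuine gap is exactly the bootstrapping step you flag. Your claim that ``any finitely generated graded $\Lambda$-module $N$ admits a finite filtration with graded-simple subquotients'' is false as stated: $\Lambda$ itself is a finitely generated graded module with no finite composition series, and more generally only modules of finite length over $\Lambda$ (equivalently, finite $\k$-dimension in this setting) have such filtrations. So the horseshoe induction does not bound $\pd_\Lambda N$ for general $N$. Your subsequent claim that ``the graded and ungraded global dimensions of $\Lambda$ coincide by standard compatibility arguments for module-finite $M$-graded algebras'' is also not a standard fact; it fails for gradings by a free abelian group in general (there is no graded Nakayama lemma in degree $0$, since the grading group has no positivity). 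You acknowledge this is ``the main technical obstacle'' but do not resolve it, and it is precisely where the real content lies.

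The paper resolves it differently, and in a way that cannot be bypassed by filtering arbitrary modules. First, it reduces via Auslander \cite{AuslanderDimensionIII} and Bass \cite[III, Prop.~6.7]{Bas68} to bounding $\pd(S)$ for \emph{ungraded} simple right $\Lambda$-modules $S$. It then shows each such $S$ is finite-dimensional over $\k$ (because $\operatorname{Ann}(S)\cap R$ is a maximal ideal of $R$ and $R$ is finitely generated over $\k$). Next it introduces a \emph{connected $\mathbb Z$-grading} on $\Lambda$ via a group homomorphism $M\to\mathbb Z$ positive on $C\smallsetminus\{0\}$, weakens it to a descending $\mathbb Z$-filtration, and invokes the theorem of N\u{a}st\u{a}sescu--Van Oystaeyen \cite[Cor.~7.6]{GradedRings} to obtain $\pd_\Lambda(S)\leq \pd_\Lambda(\mathrm{gr}(S))$. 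Since $\mathrm{gr}(S)$ is still finite-dimensional, it \emph{does} have a finite composition series with $\mathbb Z$-graded simple subquotients, which one identifies with the $S_\Delta$, so the bound from Corollary \ref{pdim} propagates. In short: the passage from $M$-graded simples to arbitrary simples needs the finite-dimensionality of simples, the connected $\mathbb Z$-grading (not the $M$-grading), and the filtered-ring comparison theorem; it is not a formal compatibility argument.
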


As an immediate corollary, we deduce that the same is true for any Morita equivalent ring. In particular, invoking Proposition \ref{ISOM},
\begin{coro}\label{q}
For any natural number $q$,  consider the $R$-module $$R^{1/q} =  \k[\frac{1}{q}M\cap C].$$ Then provided that $q$ is sufficiently large{\footnote{In practice, it is easy to see how large is sufficiently large; see the proof of Proposition \ref{ISOM}.}}, the global dimension of 
$\mathrm{End}_R(R^{1/q})  $ is equal to the dimension of $R$.
\end{coro}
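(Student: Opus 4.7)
The plan is to apply $\Hom_R(\M,-)$ to the complexes $K_\Delta^\bullet$ constructed in Section~5 and exploit the Acyclicity Lemma to obtain explicit finite projective resolutions of the simple modules over $\Lambda := \End_R(\M)$. Every term $\mathring A_\tau$ appearing in $K_\Delta^\bullet$ is, by Proposition~\ref{OpenConic1}, itself a conic module, and hence, by completeness of $\M$, isomorphic to a summand of $\M$; consequently $\Hom_R(\M,\mathring A_\tau)$ is a summand of $\Lambda$ and hence projective as a left $\Lambda$-module. Thus $\Hom_R(\M,K_\Delta^\bullet)$ is a bounded complex of projective $\Lambda$-modules of length at most $\dim M_\RR = d := \dim R$.

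Decomposing $\M=\bigoplus_i A_{\Delta_i}$ and applying the Acyclicity Lemma summand-by-summand, every summand $\Hom_R(A_{\Delta_i},K_\Delta^\bullet)$ of $\Hom_R(\M,K_\Delta^\bullet)$ is acyclic except for the unique index with $A_{\Delta_i}\simeq A_\Delta$, where the homology is one-dimensional over $\k$ and concentrated in homological degree zero. Hence $\Hom_R(\M,K_\Delta^\bullet)$ is a finite projective resolution of a $\k$-one-dimensional $\Lambda$-module $S_\Delta$, of length at most $d$.

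To conclude $\gldim\Lambda\leq d$ I would install the ``nice grading'' promised in the introduction: because $C$ is pointed, one can pick a linear functional $\ell\in N_\RR$ strictly positive on $C\setminus\{0\}$ and use it together with the base-point shifts $v_i\in\Delta_i$ to present $\Lambda$ as a $\ZZ_{\geq 0}$-graded algebra with semisimple degree-zero part $\Lambda_0\cong\k^s$ generated by the vertex idempotents $e_{\Delta_i}$. The $S_\Delta$ become the graded simples, and a standard argument for nonnegatively graded algebras with semisimple degree-zero part yields $\gldim\Lambda = \sup_\Delta \pd_\Lambda(S_\Delta) \leq d$. For the reverse inequality, the chamber of constancy $\Delta_R$ containing the origin admits $\{0\}$ as a zero-dimensional cell (because $\Sigma_1$ spans $N_\RR$, as $\sigma$ is full-dimensional), so $K_{\Delta_R}^\bullet$ has length exactly $d$; strict positivity of $\ell$ then forces every differential in $\Hom_R(\M,K_{\Delta_R}^\bullet)$ to land in the graded radical of $\Lambda$ times the next projective, so this resolution is minimal and $\pd_\Lambda(S_R)=d$.

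The main technical obstacle is engineering the grading so that $\Lambda_0$ is truly semisimple and so that every differential of $\Hom_R(\M,K_\Delta^\bullet)$ strictly raises degree, which is what both delivers the minimality needed for the lower bound and makes the reduction ``finite $\pd$ of simples $\Rightarrow$ finite $\gldim$'' applicable. Both facts reduce to somewhat delicate bookkeeping using $\ell$ and the base-point shifts $v_i$, but once that framework is in place the Acyclicity Lemma does essentially all of the real work and supplies both bounds on $\gldim \Lambda$.
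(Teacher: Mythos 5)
Your proposal in effect reproves Theorem \ref{gldim} rather than deducing the corollary from it: in the paper, Corollary \ref{q} is essentially a one-liner, since Proposition \ref{ISOM} shows $R^{1/q}$ is a complete sum of conic modules for $q\gg 0$, so $\End_R(R^{1/q})$ is Morita equivalent to the multiplicity-free $\End_R(\M)$ of Theorem \ref{gldim}, and Morita equivalence preserves global dimension. Reproving the stronger theorem is not wrong, but it is more than the statement requires.

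Within your reproof, the opening steps are correct and match the paper: completeness of $\M$ plus Proposition \ref{OpenConic1} make every term of $\Hom_R(\M,K_\Delta^\bullet)$ a projective $\End_R(\M)$-module (the paper takes these to be \emph{right} modules, not left, but since $\End_R(\M)$ is Noetherian and finite over $R$, Auslander's theorem gives $\lgldim=\rgldim$ so this is only a convention issue), and the Acyclicity Lemma shows this is a length-$\leq d$ projective resolution of the graded simple $S_\Delta$. Your lower bound argument via the chamber containing the origin is also correct and is exactly Corollary \ref{pdim} applied to the free chamber.

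The genuine gap is the reduction from \emph{all} simple $\End_R(\M)$-modules to the graded ones. You gesture at ``a standard argument for nonnegatively graded algebras with semisimple degree-zero part,'' but this is precisely the nontrivial step, and your plan to realize it by choosing shifts $v_i$ making $\Lambda$ $\ZZ_{\geq 0}$-graded with $\Lambda_0\cong \k^s$ is both delicate (you would have to verify that some choice of shifts makes all $\Hom_R(A_{\Delta_i},A_{\Delta_j})$ sit in nonnegative degrees, which you flag as ``delicate bookkeeping'' but never do) and slightly wrong as stated (with multiplicities $d_i>1$, $\Lambda_0$ is a product of matrix algebras, not $\k^s$). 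The paper avoids this engineering entirely: it invokes Bass's theorem to reduce $\gldim$ to $\sup_S \pdim(S)$ over simples, shows any simple $S$ is finite-dimensional over $\k$ via its annihilator, then weakens the connected $\ZZ$-grading to a left-limited \emph{filtration} and applies \cite[Cor.~7.6]{GradedRings} to get $\pdim(S)\leq\pdim(\mathrm{gr}(S))$, and finally bounds $\pdim(\mathrm{gr}(S))$ by the projective dimensions of its graded composition factors $S_\Delta$. That filtration route requires no positivity of the grading and no semisimplicity of the degree-zero part, and is what actually closes the argument; as written your proposal leaves this step unjustified.
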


From
Corollary \ref{q}, we deduce also that  in  prime characteristic $p$, the ring of differential operators $D_{\k}(R)$ on a toric ring has finite global dimension; see Theorem \ref{Dmod}. We do not know if the same is true in characteristic zero.

The rest of this section is devoted to the proofs of these results, as well as Theorem \ref{Dmod} on the  finite global dimension of $D_\mathsf{k}(R)$ for toric $R$ over a perfect field of characteristic $p$.

\subsection{Modules over Endomorphism Rings.} We review some of the general module  theory for endomorphism rings in our special setting at hand.
 See also   \cite[4.1]{SmVdB} or \cite[11.1]{Hazewinkeletc}.

Fix an $R$-module $\M$, of the form
\begin{equation}\label{eqM}
 \M := \bigoplus_{i=1}^m A_{\Delta_i}^{d_i}, 
\end{equation}
where the $d_i$ are positive integers and every conic module  is isomorphic to a unique $A_{\Delta_i}$ appearing in the sum. We call such a module $\M$ a \textbf{complete sum of conic modules}.
Our motivating example is $R^{1/q}$, which is isomorphic to a complete sum of conic modules when $q$ is large enough (Proposition \ref{ISOM}). When we choose an isomorphism from  $R^{1/q}$ to a complete sum of conic modules, we make a choice of the representatives $A_{\Delta_i}$ appearing in the sum. The resulting $M$-gradings on $R^{1/q}$ and its endomorphism ring depend on this choice, and so they should be regarded as artificial (but very useful!).

Consider the endomorphism algebra  $\mathrm{End}_R(\M)$. 
Because $\M$ is finitely generated, the ring  $\mathrm{End}_R(\M)$ is Noetherian. Thus the maximal length of a minimal  projective resolution in the category of left and right $\mathrm{End}_R(\M)$-modules is the same, see \cite{AuslanderDimensionIII}. Therefore, we focus attention on the 
{\it right} modules over $\mathrm{End}_R(\M)$.

\subsection{Right modules over $\mathrm{End}_R(\M)$.}
For any $R$-module $B$, $\mathrm{Hom}_R(\M,B)$ is naturally a right $\mathrm{End}_R(\M)$-module with action given by composition on the first factor. This induces a functor 
\[ \mathrm{Hom}_R(\M,-): \mathrm{Mod}(R) \longrightarrow \mathrm{Mod}(\mathrm{End}_R(\M))\]
where  $\mathrm{Mod}(\mathrm{End}_R(\M))$ denotes the category of { right} $\mathrm{End}_R(\M)$-modules. 
This functor restricts to an equivalence between the full subcategories
\begin{equation}\label{equiv} 
\mathrm{add}(\M) \garrow{\sim} \mathrm{proj}(\End_R(\M))  
\end{equation}
of finite direct sums of (graded) summands of $\M$ and 
%is the full subcategory (of the category of $R$-modules) consisting of $R$-module summands of finite direct sums of $\M$ 
finitely generated (graded) projective right   $\mathrm{End}_R(\M)$ modules.
To see this, observe that the functor $G_*:=\mathrm{Hom}_R(\M,-)$ has left adjoint $G^*:=-\otimes_{\mathrm{End}_R(\M)}\M$, and that the adjunction maps define isomorphisms
\[ G^*G_*(\M)\garrow{\sim}\M \quad \text{ and } \quad \mathrm{End}_R(\M)\garrow{\sim} G_*G^*(\mathrm{End}_R(\M)) \ . \]
Both functors respect the $M$-grading and commute with direct sums, so they induce an equivalence $\mathrm{add}(\M)\simeq \mathrm{add}(\mathrm{End}_R(\M))$, the latter of which equals $\mathrm{proj}(\mathrm{End}_R(\M))$.
By restricting to the $M$-graded category, we have a functorial bijection between  conic $R$-modules and indecomposable projective $M$-graded modules over $\mathrm{End}_R(\M)$. Summarizing and continuing, we have

\begin{prop}\label{proj}
  For any $\Delta$, 
\begin{enumerate}
\item 
The $\mathrm{End}_R(\M)$-module
\[ P_\Delta := \mathrm{Hom}_R(\M,A_\Delta)\]
is a graded indecomposable projective module, and every graded indecomposable projective $\mathrm{End}_R(\M)$-module is isomorphic to a module of this form.
\item \label{ProjRad}
The projective module $P_\Delta$ has a unique maximal graded submodule.  Every graded simple $\mathrm{End}_R(\M)$-module is isomorphic to a quotient of some $P_\Delta$ by this maximal submodule.
\end{enumerate}
\end{prop}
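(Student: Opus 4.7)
The strategy is to transfer both statements about $P_\Delta$ as an $\End_R(\M)$-module back to statements about $A_\Delta$ as an $R$-module via the equivalence \eqref{equiv}. For part (1), since $\M$ is a complete sum of conic modules, every conic module $A_\Delta$ is graded isomorphic to some summand $A_{\Delta_i}$ of $\M$, so $A_\Delta\in\add(\M)$, and the equivalence \eqref{equiv} immediately yields that $P_\Delta = G_*(A_\Delta)$ is graded projective. For graded indecomposability, the equivalence identifies the graded endomorphism ring of $P_\Delta$ with $\End_R(A_\Delta)$, whose degree-zero component is $\k$ by Proposition \ref{ConicModProp}(3); since any homogeneous idempotent must have degree zero, the only homogeneous idempotents of $\End_R(A_\Delta)$ are $0$ and $1$, so $P_\Delta$ is graded indecomposable. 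Conversely, any graded indecomposable projective right $\End_R(\M)$-module arises as $G_*$ of a graded indecomposable summand of $\M$, and such summands are, by Krull--Schmidt in the graded category (which applies because each $A_{\Delta_i}$ has graded endomorphism ring with local degree-zero part $\k$), graded isomorphic to one of the $A_{\Delta_i}$.

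For part (2), the key observation is that $P_\Delta$ is cyclic in degree zero: writing $\M = A_\Delta\oplus\M'$ with corresponding inclusion $\iota_\Delta$ and projection $\pi_\Delta$, we have $\pi_\Delta\in [P_\Delta]_0$ and $h = \pi_\Delta\cdot(\iota_\Delta\circ h)$ for every $h\in P_\Delta$. Let $J$ denote the graded Jacobson radical of $\End_R(\M)$, namely the intersection of all maximal graded right ideals. By the graded analog of Bass' theory of semiperfect rings---valid here because $\End_R(\M)$ is graded Noetherian and the graded endomorphism rings of its indecomposable graded projective summands have their degree-zero parts equal to the field $\k$---the quotient $\End_R(\M)/J$ is graded semisimple and decomposes as a finite direct sum of graded simple modules, indexed by the isomorphism classes of the $A_{\Delta_i}$. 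The submodule $P_\Delta\cdot J$ is then the unique maximal graded submodule of $P_\Delta$, with graded simple top $P_\Delta/P_\Delta J$. For exhaustion, any graded simple right $\End_R(\M)$-module $S$ is annihilated by $J$ and so is a simple quotient of $\End_R(\M)/J$; matching against the decomposition, $S$ is isomorphic to $P_{\Delta_i}/P_{\Delta_i}J$ for a unique $i$, hence to the top of the corresponding $P_\Delta$.

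\textbf{The main obstacle} lies in the graded semiperfect argument identifying $P_\Delta\cdot J$ as the unique maximal graded submodule. The essential hypotheses---finite generation over a Noetherian graded ring, together with the indecomposable graded projective summands having degree-zero endomorphism rings equal to $\k$---are precisely the graded analogs of the classical semiperfect setup, so the argument parallels the familiar ungraded picture. Once this structural step is in place, both the identification of the graded top of $P_\Delta$ and the exhaustion of graded simples are formal consequences of the equivalence \eqref{equiv} and the decomposition of $\End_R(\M)/J$.
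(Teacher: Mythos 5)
Your proof is correct in substance, and the two approaches share the same ultimate insight---that $\End_R(\M)$ is graded semiperfect because its graded indecomposable projective summands $P_{\Delta_i}$ have graded-local endomorphism rings---but they diverge in how much of that structure is verified explicitly. For part (1) you argue exactly as the paper does, via the equivalence $\add(\M)\simeq\mathrm{proj}(\End_R(\M))$ together with the observation that $[\End_R(A_\Delta)]_0=\k$ has no nontrivial idempotents. For part (2), however, you appeal to the graded analogue of Bass' theory of semiperfect rings and identify the unique maximal graded submodule as $P_\Delta\cdot J$ where $J$ is the graded Jacobson radical; the paper instead deliberately avoids citing that theory (stating explicitly that they ``keep the paper self-contained'') and constructs the radical concretely: in the subsequent subsection it defines $\mathrm{Rad}(\M,A_\Delta)$ as the span of homogeneous row vectors all of whose entries are non-isomorphisms, and shows by a short column-operation argument that any homogeneous element outside this submodule generates all of $P_\Delta$, so $\mathrm{Rad}(\M,A_\Delta)$ is the unique maximal graded submodule. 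Your route is cleaner and shorter if one accepts the graded semiperfect machinery as a black box; the paper's route gives a direct description of the radical that is then actively \emph{used} later (e.g.\ in the proof of Theorem~\ref{projective} and Lemma~\ref{pands}), which is worth keeping in mind if you intend to push further. One small imprecision: the hypothesis you invoke (``graded Noetherian'' plus ``degree-zero endomorphism rings equal to $\k$'') is not quite the standard formulation---what one actually needs is that $\End_R(\M)$ decomposes as a finite graded direct sum of indecomposable projectives each of whose graded endomorphism ring is graded local, and you should note that $[\End_R(A_\Delta)]_0=\k$ together with the fact that any nonzero-degree homogeneous endomorphism is automatically a non-unit yields graded locality, which is where the real content lies.
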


\noindent We let $S_\Delta$ denote the unique graded simple quotient of $P_\Delta$.

Although Proposition \ref{proj} can be found in the literature (eg.~see \cite[Lemma 4.1.1]{SmVdB} or \cite[Prop.~11.1.1]{Hazewinkeletc}),  we keep the paper self-contained and our proof easy to follow by explicitly describing the unique maximal (right) submodule of $P_\Delta$--called the \emph{radical} of $P_\Delta$--in Section \ref{RadicalTheory} below.

\begin{rem}  \label{gradingRemark}
The word {\it graded}  in Proposition \ref{proj} above refers to the natural $M$-grading on all the objects;  all maps are homogeneous in this grading. 
However, because $C$ is pointed,  the toric ring $R$ can also be given a (non-canonical) connected $\mathbb Z$-grading by fixing a group homomorphism $M\rightarrow \mathbb{Z}$  taking strictly positive values on $M\cap(C\smallsetminus\{0\})$.  This grading induces compatible gradings on conic modules, on $\M$, and on $\mathrm{End}_R(\M)$. Both statements in Proposition \ref{proj} are valid for this connected $\mathbb Z$-grading as well. 
The same is true of Theorem \ref{projective} below.
 \end{rem}

The following theorem (proven in the next section) states that the functor $G_*$ sends the complex $K^\bullet_\Delta$ to a projective resolution of the simple quotient module $S_\Delta$.

\begin{thm}\label{projective}
The complex $\mathrm{Hom}_R(\M,K^\bullet_\Delta)$ is a graded $\mathrm{End}_R(\M)$-projective resolution of the simple module $S_\Delta$, and this projective resolution has minimal length.
\end{thm}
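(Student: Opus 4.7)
The plan is to reduce the theorem to the Acyclicity Lemma via the Morita-style equivalence $\add(\M)\simeq \mathrm{proj}(\End_R(\M))$ in \eqref{equiv}. First, each term of $K^\bullet_\Delta$ is a finite direct sum of open conic modules $\mathring{A}_\tau$, and by Proposition \ref{OpenConic1} each $\mathring{A}_\tau$ equals an honest conic module $A_{v+\varepsilon p}$. Because $\M$ is a complete sum of conic modules, every such module is isomorphic to a summand of $\M$, so each term of $K^\bullet_\Delta$ lies in $\add(\M)$, and \eqref{equiv} shows that $\Hom_R(\M, K^i_\Delta)$ is a finitely generated graded projective right $\End_R(\M)$-module.

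Next, I would compute the cohomology of $\Hom_R(\M,K^\bullet_\Delta)$. Exactness of a complex of right $\End_R(\M)$-modules can be verified on the underlying complex of graded $\k$-vector spaces, and additivity of $\Hom_R$ in its first argument gives a decomposition
\[ \Hom_R(\M,K^\bullet_\Delta)\;=\;\bigoplus_{i=1}^m \Hom_R(A_{\Delta_i},K^\bullet_\Delta)^{d_i} \]
of $M$-graded complexes of $\k$-vector spaces, where $\M=\bigoplus_i A_{\Delta_i}^{d_i}$ and one takes $A_{\Delta_1}=A_\Delta$. The Acyclicity Lemma (Lemma \ref{acyclicity}) immediately shows that every summand with $A_{\Delta_i}\not\simeq A_\Delta$ is acyclic, while $\Hom_R(A_\Delta,K^\bullet_\Delta)$ has one-dimensional homology concentrated in homological degree $0$ and $M$-degree $0$. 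Hence $\Hom_R(\M,K^\bullet_\Delta)$ is exact in positive homological degrees, and its $0$-th cohomology is a graded quotient of $P_\Delta$ of total $\k$-dimension $d_1$ supported in $M$-degree $0$. Since $S_\Delta$ is the unique graded simple quotient of $P_\Delta$ by Proposition \ref{proj}(2), this cokernel must surject onto $S_\Delta$; matching graded dimensions via Proposition \ref{ConicModProp}(3) together with the description of $\rad(P_\Delta)$ in Section \ref{RadicalTheory} will then force $H^0\simeq S_\Delta$.

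For minimality, the differentials of $K^\bullet_\Delta$ are signed sums of strict inclusions $\mathring{A}_{\tau'}\subsetneq \mathring{A}_\tau$ between open conic modules, strict because distinct open cells yield distinct open conic modules by Corollary \ref{OpenCells}. Such inclusions connect chambers $\Delta_{\tau'}\prec \Delta_\tau$ strictly lower in the poset, and under the equivalence \eqref{equiv} the induced maps between indecomposable projective $\End_R(\M)$-modules have no isomorphism components and therefore lie in the graded radical of $\End_R(\M)$ — the standard criterion for minimality of a graded projective resolution. The main obstacle I anticipate is the dimension-matching step identifying $H^0$ with $S_\Delta$: although $H^0$ automatically surjects onto $S_\Delta$, ruling out a strictly larger quotient requires knowing that the degree-$0$ part of $\rad(P_\Delta)$ captures every non-identity contribution to $(P_\Delta)_0$, a fact best deferred to the explicit radical calculation in Section \ref{RadicalTheory}.
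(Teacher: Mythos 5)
Your proof is correct and follows essentially the same route as the paper: projectivity via Proposition \ref{OpenConic1} and the equivalence \eqref{equiv}, then the Acyclicity Lemma combined with the radical description from Section \ref{RadicalTheory} to identify $H^0 \cong S_\Delta$ by a dimension count. Your minimality argument --- observing directly that every differential has components in the graded radical and invoking the standard criterion --- is a slight streamlining of the paper's, which derives the same conclusion by computing $\mathrm{Ext}$ groups via Proposition \ref{Ext} and noting the top $\mathrm{Ext}$ is nonzero; the underlying observation (compositions with proper inclusions between conic modules land in the radical) is identical in both.
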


\subsection{Radicals} \label{RadicalTheory}
We review some general theory in preparation to prove Theorem \ref{projective}. By definition, a homogeneous $R$-module homomorphism $f:A_\Delta\rightarrow A_{\Delta'}$ is \textbf{radical} if it  is {\it not} an isomorphism.
The radical homomorphisms generate a graded $R$-submodule
 $\mathrm{Rad}(A_{\Delta},A_{\Delta'})\subseteq \mathrm{Hom}_R(A_\Delta,A_{\Delta'})$. By Proposition \ref{Rad},  
\[ \mathrm{Rad}(A_\Delta,A_{\Delta'}) = \Span\{x^m \mid m\in M \text{ such that } \Delta +  m  \prec \Delta'\}, \]
where the symbol $\prec$ indicates {\it strictly less} in the poset on chambers. In particular, $ \mathrm{Hom}_R(A_\Delta,A_{\Delta'})/\mathrm{Rad}(A_{\Delta},A_{\Delta'})$ is one dimensional  over $\k$
if $A_{\Delta} \simeq  A_{\Delta'}$ and zero otherwise.
 
  More generally, we may represent a graded homomorphism
  from a  direct sum $\M$ of $n$ conic modules  $A_i$ to  a  direct sum $\mathbb B$ of $m$ conic modules  $B_j$ by
 an $m\times n$  matrix $[\phi_{ij}]$ where $\phi_{ij} \in \mathrm{Hom}_R(A_j, B_i)$;
 such a homomorphism is  \textbf{radical}  if each of its entries $\phi_{ij}$ is.\footnote{Here, we consider the elements of $\mathbb A$ and $\mathbb B$ as column vectors with entries in $A_i$ and $B_j$ respectively,  and the matrix acts on the left.}
%If $B$ is any sum of conic modules, then the $\mathrm{End}_R(\M)$-module  $\mathrm{Hom}_R(\mathbb A, B) $ has maximal  graded right submodules are indexed by the isomorphism types of modules appearing in $\mathbb B$.
  In particular,  the elements of   $\mathrm{Hom}_{\End_R \M}(\mathbb A, A_{\Delta}) $ are represented by row vectors with entries in $ \mathrm{Hom}_{R}(A_i, A_{\Delta})$; the homogenous maps  in its radical have entries in the radical of each 
   $ \mathrm{Hom}_{R}(A_i, A_{\Delta})$.  
   This is clearly a maximal graded right submodule: if some  homogeneous row matrix $(\phi_j)$ is not in $ \mathrm{Rad}(\M, A_{\Delta})$, then some entry $\phi_j\in  \mathrm{Hom}_{R}(A_{j}, A_{\Delta})$ is a graded isomorphism. Since elements of $\mathrm{End}_R(\M)$ act on the right by  column operations on the row vector  $(\phi_j)$,  the right module generated by $(\phi_j)$ contains the 
   row vector  $(\psi_j) \in  \mathrm{Hom}_{R}(\M, A_{\Delta})$ which has zeros in every spot except the $j$-th, which is the identity map. Since this idempotent is a $\mathrm{End}_R(\M)$ generator for the right module $\mathrm{Hom}_{\End_R(\M)}(\mathbb A, A_{\Delta}), $ we conclude that $ \mathrm{Rad}(\M, A_{\Delta})$ is maximal.

    Put differently, $\mathrm{End}_R(\M)$ is \emph{graded semiperfect} in the language of  \cite{Dascalescu}.

%%%%%

\begin{proof}[Proof of Theorem \ref{projective}]
Fix a chamber $\Delta$, and corresponding simple $\mathrm{End}_R(\M)$-module $S_{\Delta}$.  
Recall that $K_\Delta^0=A_\Delta$ by construction. It follows that the $\mathrm{Hom}_R(\M,K_\Delta^0)=P_\Delta$, and so there is a natural surjection $\mathrm{Hom}_R(\M,K_\Delta^0) \rightarrow S_\Delta$.

The natural surjection $\mathrm{Hom}_R(\M,K_\Delta^0) = P_{\Delta} \rightarrow S_\Delta$ makes the complex $\mathrm{Hom}_R(\M,K^\bullet_\Delta)$ into a projective resolution of $S_\Delta$.
Indeed, since each $K_\Delta^i$ is a direct sum of conic modules, each  $\mathrm{Hom}_R(\M,K^i_\Delta)$ is a projective module over $\mathrm{End}_R(\M)$.  It remains to show that this complex is \emph{exact}, and that there is no shorter projective resolution.

The acyclicity Lemma \ref{acyclicity} tells us that this complex  $\mathrm{Hom}_R(\M,K^i_\Delta)$  is exact except possibly at the penultimate spot
\[ \bigoplus_{{\tiny{\begin{array}{c}
{\text{cells }}\tau\in \Delta \\
{\text{codim}}(\tau)=1\\
  \end{array}} }}\Hom_R(\M,\mathring{A}_\tau) \xrightarrow{} \Hom_R(\M,A_\Delta)=P_\Delta \xrightarrow{} S_\Delta \ . \]
  Since the kernel of the natural projection map $P_\Delta \xrightarrow{} S_\Delta$ is the radical of $P_\Delta$, we need to check that this is the image of the incoming map. But this is immediate from  Lemma \ref{acyclicity}(2).

It remains to show that $S_{\Delta}$ has no shorter projective resolution. This follows from the following proposition, whose proof will appear below:

\begin{prop}\label{Ext}
For any pair of chambers of constancy $\Delta$ and $\Delta'$,
\[ \dim(\mathrm{Ext}^i_{\mathrm{End}_R(\M)}(S_{\Delta},S_{\Delta'})) = \#\text{ of codim. $i$ cells $\tau$ in $\Delta$ with $\mathring{A}_\tau\simeq A_{\Delta'}$}
\]
%\[ \dim(\mathrm{Ext}^i_{\mathrm{End}_R(\M)}(S_{\Delta},S_{\Delta'})) = \#\text{ of cells $\tau\subset\Delta$ of codimension $i$ with $\mathring{A}_\tau\simeq A_{\Delta'}$} \]
\end{prop}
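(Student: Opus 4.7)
The plan is to compute $\mathrm{Ext}^i_{\mathrm{End}_R(\M)}(S_\Delta, S_{\Delta'})$ by applying the functor $\mathrm{Hom}_{\mathrm{End}_R(\M)}(-, S_{\Delta'})$ to the explicit projective resolution $\mathrm{Hom}_R(\M, K^\bullet_\Delta)$ of $S_\Delta$ furnished by Theorem \ref{projective}, and to show that the induced cochain complex has all differentials equal to zero; in other words, the resolution is \emph{minimal}, and the Ext dimensions can be read off directly.

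First I would unpack the $i$-th term. Since $K^i_\Delta = \bigoplus_\tau \mathring{A}_\tau$ as $\tau$ ranges over codimension-$i$ cells of $\Delta$, and each $\mathring{A}_\tau$ is itself a conic module $A_{\Delta(\tau)}$ by Proposition \ref{OpenConic1}, the term $\mathrm{Hom}_R(\M, K^i_\Delta)$ decomposes as a direct sum of indecomposable projectives $P_{\Delta(\tau)}$. Via the equivalence (\ref{equiv}) and the description of radicals from Section \ref{RadicalTheory},
$$ \mathrm{Hom}_{\mathrm{End}_R(\M)}(P_{\Delta(\tau)}, S_{\Delta'}) \;\cong\; \mathrm{Hom}_R(A_{\Delta(\tau)}, A_{\Delta'})/\mathrm{Rad}(A_{\Delta(\tau)}, A_{\Delta'}), $$
which is one-dimensional exactly when $A_{\Delta(\tau)} \simeq A_{\Delta'}$ (equivalently $\mathring{A}_\tau \simeq A_{\Delta'}$) and zero otherwise. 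So the $\k$-dimension of the $i$-th term of the cochain complex already equals the claimed cell count, and it remains to argue that the differentials do not decrease it.

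The key remaining step is minimality. Each nonzero entry of the differential $K^i_\Delta \to K^{i-1}_\Delta$ is, by construction, a signed inclusion $\mathring{A}_{\tau'} \hookrightarrow \mathring{A}_\tau$ where $\tau' \subset \partial \tau$. Since $\tau$ and $\tau'$ are distinct open cells in the same chamber $\Delta$, Corollary \ref{OpenCells} forces this inclusion to be \emph{strict}, and so it is a radical (non-isomorphism) map of conic $R$-modules. Applying $\mathrm{Hom}_R(\M, -)$ and using (\ref{equiv}), each such map becomes a radical map between indecomposable projectives over $\mathrm{End}_R(\M)$, and thus factors through $\mathrm{rad}(P_{\Delta(\tau)})$. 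Since the radical lies in the kernel of every surjection to a simple quotient, the induced map $\mathrm{Hom}(P_{\Delta(\tau)}, S_{\Delta'}) \to \mathrm{Hom}(P_{\Delta(\tau')}, S_{\Delta'})$ is zero, so every coboundary in the Ext complex vanishes and the formula follows. The main obstacle I anticipate is bookkeeping for the $M$-grading --- strictly speaking one must sum over all $m \in M$ with $\Delta(\tau) + m = \Delta'$ --- but since such an $m$ is uniquely determined by the isomorphism class via Proposition \ref{isom}, each qualifying cell contributes exactly one copy of $\k$, matching the combinatorial count on the right-hand side.
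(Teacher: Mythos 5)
Your proof is correct and follows essentially the same route as the paper's: apply $\mathrm{Hom}_{\mathrm{End}_R(\M)}(-, S_{\Delta'})$ to the explicit projective resolution $\mathrm{Hom}_R(\M, K^\bullet_\Delta)$, identify the terms via Lemma \ref{pands}, and observe the differentials vanish because they arise from proper (hence radical) inclusions of conic modules, making the resolution minimal. Your phrasing of the minimality step in the language of radical maps between indecomposable projectives is a slightly more detailed unwinding of the paper's one-line observation that the boundary maps are ``compositions with proper homogeneous injections.''
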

%\noindent Note that there is at most one such cell, by Corollary \ref{coro: ???}.

To see that $\mathrm{Hom}_R(\M,K^\bullet_\Delta)$   is a minimal projective resolution of $S_{\Delta}$, let $\tau$ be a cell of maximal codimension in $\Delta$, and let $\Delta'$ be the chamber of constancy such that $A_{\Delta'}=\mathring{A}_\tau$. The length of $\mathrm{Hom}_R(\M,K_\Delta^\bullet)$ is $\mathrm{codim}(\tau)$, and from the proof of Proposition \ref{Ext} it follows that 
\[\mathrm{Ext}^{\mathrm{codim}(\tau)}_{\mathrm{End}_R(\M)}(S_\Delta,S_{\Delta'}) \neq 0 \ . \]
Therefore, $S_\Delta$ cannot have a projective resolution shorter than length $\mathrm{codim}(\tau)$.
\end{proof}

\begin{proof}[Proof of Proposition \ref{Ext}]
Using the resolution $\mathrm{Hom}_R(\M,K_\Delta^\bullet)\rightarrow S_\Delta$ to compute the Ext groups,  
 this Ext group is the $i$th cohomology of the complex 
\[ \mathrm{Hom}_{\mathrm{End}_R(\M)}(\mathrm{Hom}_R(\M,K_\Delta^\bullet),S_{\Delta'}) \]
By Lemma \ref{pands} below, this is equal to
\[%\bigoplus_{\tau\in \Delta}\mathrm{Hom}_{\mathrm{End}_R(\M)}(\mathring{P}_\tau,S_{\Delta'}) \simeq 
\bigoplus_{\tau \in \Delta}\mathrm{Hom}_R(\mathring{A}_\tau,A_{\Delta'})/\mathrm{Rad}(\mathring{A}_\tau,A_{\Delta'})\]
as a graded vector space. On each component, the boundary map is given by a sum of compositions with proper homogeneous injections, and so consequently every component of the image is $0$. Hence, the boundary map is zero on the above graded vector space. The subspace of cohomological degree $i$ corresponds to $\tau$ of codimension $i$. As the $\tau$-indexed summand is $1$-dimensional if $\mathring{A}_\tau\simeq A_{\Delta'}$ and is $0$ otherwise, the dimension of Ext counts the desired $\tau$. This will conclude the proof of Proposition \ref{Ext} as soon as we prove Lemma \ref{pands}. \end{proof}

\begin{lemma}\label{pands}
For any $\Delta$ and $\Delta'$, there is a graded $\k$-vector space isomorphism
\[ \mathrm{Hom}_{\mathrm{End}_R(\M)}(P_\Delta,S_{\Delta'}) \simeq \mathrm{Hom}_R(A_\Delta,A_{\Delta'})/\mathrm{Rad}(A_\Delta,A_{\Delta'})\] 
These spaces are dimension  one or zero,  depending on whether $A_\Delta\simeq A_{\Delta'}$ or not.
\end{lemma}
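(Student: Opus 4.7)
The plan is to use the category equivalence \eqref{equiv} between $\mathrm{add}(\M)$ and $\mathrm{proj}(\mathrm{End}_R(\M))$ to reduce the computation of $\mathrm{Hom}_{\mathrm{End}_R(\M)}(P_\Delta, S_{\Delta'})$ to a computation on the $R$-module side, where the radical has an explicit combinatorial description in terms of chambers via Proposition \ref{Rad}.

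First I would apply the functor $\mathrm{Hom}_{\mathrm{End}_R(\M)}(P_\Delta,-)$ to the defining short exact sequence
\[ 0 \longrightarrow \mathrm{rad}(P_{\Delta'}) \longrightarrow P_{\Delta'} \longrightarrow S_{\Delta'} \longrightarrow 0. \]
Since $P_\Delta$ is projective this functor is exact, and all terms are $M$-graded, so we get a short exact sequence of graded $\k$-vector spaces. The equivalence $\mathrm{Hom}_R(\M,-)\colon \mathrm{add}(\M) \simeq \mathrm{proj}(\mathrm{End}_R(\M))$ from \eqref{equiv}, which respects the $M$-grading, provides a graded isomorphism $\mathrm{Hom}_R(A_\Delta, A_{\Delta'}) \cong \mathrm{Hom}_{\mathrm{End}_R(\M)}(P_\Delta, P_{\Delta'})$ identifying the middle term.

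The main step, and the one I expect to be the main (if mild) obstacle, is to match $\mathrm{Hom}_{\mathrm{End}_R(\M)}(P_\Delta, \mathrm{rad}(P_{\Delta'}))$ with $\mathrm{Rad}(A_\Delta, A_{\Delta'})$ under this identification. Using the explicit description of $\mathrm{rad}(P_{\Delta'})$ from Section \ref{RadicalTheory}, as the submodule of row vectors in $\mathrm{Hom}_R(\M,A_{\Delta'}) = \bigoplus_i \mathrm{Hom}_R(A_{\Delta_i}, A_{\Delta'})^{d_i}$ whose entries are all non-isomorphisms, one checks the following: a graded map $f\colon P_\Delta \to P_{\Delta'}$ corresponds via the equivalence to post-composition with a graded map $\tilde f \colon A_\Delta \to A_{\Delta'}$, and $f$ takes values in $\mathrm{rad}(P_{\Delta'})$ iff $\tilde f$ is not an isomorphism, iff $\tilde f \in \mathrm{Rad}(A_\Delta, A_{\Delta'})$. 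Indeed, if $\tilde f$ is non-iso then every composite $\tilde f \circ \phi$ (with $\phi \in P_\Delta$) has only non-iso entries; conversely, if $\tilde f$ is an iso one can choose $\phi$ projecting onto a summand of $\M$ isomorphic to $A_\Delta$ so that $\tilde f \circ \phi$ has an iso entry.

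Combining these steps yields the desired graded isomorphism
\[ \mathrm{Hom}_{\mathrm{End}_R(\M)}(P_\Delta, S_{\Delta'}) \simeq \mathrm{Hom}_R(A_\Delta, A_{\Delta'})/\mathrm{Rad}(A_\Delta, A_{\Delta'}). \]
For the dimension claim, Proposition \ref{ConicModProp}(3) (combined with the explicit formula for $\mathrm{Rad}$ from Section \ref{RadicalTheory}) shows the quotient is spanned by $x^m$ for those $m \in M$ with $\Delta + m = \Delta'$; and by Proposition \ref{isom}, such an $m$ exists---and is then unique---precisely when $A_\Delta \simeq A_{\Delta'}$. This gives the stated dichotomy between dimensions one and zero.
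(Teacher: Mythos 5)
Your proof is correct and follows essentially the same route as the paper's: apply $\mathrm{Hom}_{\mathrm{End}_R(\M)}(P_\Delta,-)$ to the short exact sequence $0 \to \mathrm{rad}(P_{\Delta'}) \to P_{\Delta'} \to S_{\Delta'} \to 0$, use projectivity and the equivalence~\eqref{equiv}, and match $\mathrm{rad}(P_{\Delta'})$ with $\mathrm{Rad}(A_\Delta,A_{\Delta'})$. You simply spell out the radical-matching step (choosing $\phi$ projecting onto a summand of $\M$ isomorphic to $A_\Delta$) in a bit more detail than the paper does.
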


\begin{proof}
By the projectivity of $P_\Delta$, there is a short exact sequence
\[ 0 \rightarrow  \mathrm{Hom}(P_\Delta,\mathrm{Rad}(P_{\Delta'}))
\rightarrow  \mathrm{Hom}(P_\Delta,P_{\Delta'})
\rightarrow  \mathrm{Hom}(P_\Delta,S_{\Delta'})
\rightarrow 0\]

The equivalence of categories (\ref{equiv}) induced by $\mathrm{Hom}_R(\M,-)$  guarantees that every homogenous map $g:P_\Delta\rightarrow P_{\Delta'}$ is given by applying $\mathrm{Hom}_R(\M,-)$ to a homogenous map $g':A_\Delta\rightarrow A_{\Delta'}$. The composition $\M\garrow{f}A_\Delta\garrow{g'} A_{\Delta'}$ is in $\mathrm{Rad}(P_{\Delta'})$ for all $f$ if and only if $g'\in \mathrm{Rad}(A_\Delta,A_{\Delta'})$. Hence, the isomorphism $\mathrm{Hom}(A_\Delta,A_{\Delta'})\simeq \mathrm{Hom}(P_{\Delta},P_{\Delta'})$ induces a bijection between $\mathrm{Rad}(A_\Delta,A_{\Delta'})$ and the image of $\mathrm{Hom}(P_\Delta,\mathrm{Rad}(P_{\Delta'}))$. By the above short exact sequence, $\mathrm{Hom}(P_\Delta,S_{\Delta'})\simeq \mathrm{Hom}(A_\Delta,A_{\Delta'})/\mathrm{Rad}(A_\Delta,A_{\Delta'})$.
\end{proof}

\begin{rem}
The resolution $\mathrm{Hom}_R(\M,K_\Delta^\bullet)$ is `minimal' in a stronger sense. Specifically, it lifts to a homogeneous split injection $\mathrm{Hom}_R(\M,K_\Delta^\bullet)\hookrightarrow P^\bullet$ along any graded projective resolution $P^\bullet\rightarrow S_\Delta$. This is the graded analog of a `minimal projective resolution', and can be proven by observing the morphisms in $\mathrm{Hom}_R(\M,K_\Delta^\bullet)$ have coordinates in $\mathrm{Rad}(\mathrm{End}_R(\M))$ in an appropriate sense.
\end{rem}

\begin{coro}\label{pdim}
The projective dimension of $S_\Delta$ is the maximum codimension of cells in $\Delta$. In particular, $\mathrm{pdim}(S_\Delta)\leq \mathrm{rank}(M)$, with equality when $A_\Delta$ is free.
%The projective dimension of $S_\Delta$ is at most the Krull dimension of $R$, with equality when $A_\Delta$ is free.\gregmargin{Do we have a fixed notation for the Krull dimension of $R$/rank of $M$?}
\end{coro}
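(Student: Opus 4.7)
The plan is to deduce this corollary immediately from Theorem \ref{projective} combined with the explicit combinatorial description of $K_\Delta^\bullet$. Theorem \ref{projective} furnishes a projective resolution $\mathrm{Hom}_R(\mathbb A, K_\Delta^\bullet)\to S_\Delta$ of \emph{minimal} length, which by definition is the projective dimension of $S_\Delta$. By the construction in \eqref{K}, the term of $K_\Delta^\bullet$ in homological degree $i$ is a direct sum of open conic modules $\mathring A_\tau$ indexed by the codimension-$i$ open cells $\tau$ of $\Delta$; thus the length of the complex equals the maximum codimension of a cell appearing in $\Delta$. Since every $\mathring A_\tau$ is isomorphic to a conic module (Proposition \ref{OpenConic1}) and hence to a summand of $\mathbb A$, applying $\mathrm{Hom}_R(\mathbb A,-)$ does not collapse the top term. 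This yields the first claimed equality.

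For the bound $\mathrm{pdim}(S_\Delta)\leq \mathrm{rank}(M)$, I would simply note that every open cell lives in the ambient space $M_{\mathbb R}$ of dimension $\mathrm{rank}(M)$, so its codimension is at most $\mathrm{rank}(M)$. For the equality case when $A_\Delta$ is free, the goal is to exhibit a $0$-dimensional cell of $\Delta$. Since $A_\Delta\simeq R = A_0$, Proposition \ref{isom} lets us translate so that $0\in \Delta$. Then the cell formula \eqref{CellFormula} with $v=0$, $d_j=0$, and $\Omega=\emptyset$ produces the candidate cell
\[
\{x\in M_{\mathbb R} \mid \langle x,n_j\rangle = 0 \text{ for all } n_j\in \Sigma_1\}.
\]
Because $C$ is pointed, the dual cone $\sigma$ is full-dimensional, so $\Sigma_1$ spans $N_{\mathbb R}$; this forces the candidate cell to be $\{0\}$, a genuine $0$-dimensional cell of $\Delta$ of codimension $\mathrm{rank}(M)$. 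The maximum codimension of a cell in $\Delta$ is therefore exactly $\mathrm{rank}(M)$.

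There is no real obstacle here: the corollary is bookkeeping once Theorem \ref{projective} and the cellular structure from Section \ref{Notation3} are in hand. The only subtle points to keep in mind are that Theorem \ref{projective} already provides a \emph{minimal} projective resolution (so no separate minimality argument is needed) and that both the pointedness and full-dimensionality of $C$ are used in the equality case to guarantee that the primitive normals $\Sigma_1$ span $N_{\mathbb R}$.
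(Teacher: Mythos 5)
Your proposal is correct and follows the same route the paper implicitly intends: the minimal resolution from Theorem \ref{projective} has length equal to the maximum codimension of cells in $\Delta$, the bound follows since cells live in $M_{\mathbb R}$, and the equality case for free $A_\Delta$ comes from translating to put $0\in\Delta$ and observing that $\{0\}$ is a $0$-cell because $\Sigma_1$ spans $N_{\mathbb R}$ (which uses pointedness of $C$, equivalently full-dimensionality of $\sigma$). The one remark I'd add is that your observation about $\mathrm{Hom}_R(\mathbb A,-)$ not collapsing the top term is indeed the relevant point for matching the length of $K_\Delta^\bullet$ to the length of the resolution, but it's already built into the statement of Theorem \ref{projective} together with the fact that $\mathbb A$ is a complete sum of conic modules, so no additional argument is really needed there.
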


\begin{ex}\label{non-simplicial}
Consider again the three-dimensional toric algebra $R$ coming from the cone over the square in $\RR^3$, cf.~Example \ref{ConeOverSquare}. We have already calculated the three types of conic complexes for the conic modules $A_0, A_1, A_2$. Now consider $\M=A_0 \oplus A_1 \oplus A_2$ and $\End_R(\M)$. This ring has three graded simples $S_{A_i}=\Hom_R(\M,A_i)/\mathrm{Rad}(\M,A_i)$, $i=0,1,2$. We obtain projective resolutions for the simple modules  $S_{A_i}$ by applying the functor  $\Hom_{R}(\M,  -)$ to the conic complexes $K^{\bullet}_{\Delta_i}$. Thus we see that  
the endomorphism ring $\End_{R}(\M)$ has two graded simple modules of projective dimension $2$ defined by the tetrahedral chambers, and one graded simple of projective dimension $3$ defined by the octahedral chamber.
%admits three simple graded modules,  one ($\Hom_{R}(A_0\oplus A_1 \oplus A_2,  A_0)/{\text{its radical}}$) of projective dimension three, and  two of projective dimension two. 
The same is true for the Morita equivalent ring $\Hom_{R}(R^{1/q}, R^{1/q})$, for $q\gg 0$.
\end{ex}

We can finally prove the opening theorem of the section.
\begin{proof}[Proof of Theorem \ref{gldim}]
The bound $\gldim(\mathrm{End}_R(\M))\geq \mathrm{rank}(M)$ follows from Corollary \ref{pdim}.

Since $\mathrm{End}_R(\M)$ is a finitely generated module over a commutative Noetherian ring $R$, a theorem of Auslander \cite{AuslanderDimensionIII} implies the left and right global dimensions coincide, and a subsequent theorem of Bass \cite[III, Prop.~6.7]{Bas68} implies the following.
\[ \gldim(\mathrm{End}_R(\M)) = \sup\{ \mathrm{pdim}_{\mathrm{End}_R(\M)}(S) \mid S\text{ is a simple right $\mathrm{End}_R(\M)$-module}\}\]
The remainder of the proof will reduce to \emph{graded} simple right $\mathrm{End}_R(\M)$-modules.

Let $S$ be a simple right $\mathrm{End}_R(\M)$-module (with no grading). The annihilator $\mathrm{Ann}(S)$ in $\mathrm{End}_R(\M)$ is a two-sided maximal ideal, and so $\mathfrak{m}:=\mathrm{Ann}(S)\cap R$ is a maximal ideal in $R$. This implies $R/\mathfrak{m}$ is a field, and $S$ is a finite dimensional vector space over $R/\mathfrak{m}$. Since $R/\mathfrak{m}$ must be a finite extension of the ground field $\mathsf{k}$, $S$ must also be finite dimensional over $\mathsf{k}$.

Next, fix a connected $\mathbb{Z}$-grading on $R$, which induces a $\mathbb{Z}$-grading on $\mathrm{End}_R(\M)$ (see Remark \ref{gradingRemark}). This $\mathbb{Z}$-grading may be weakened to a descending $\mathbb{Z}$-filtration, by setting 
\[\mathcal{F}_i(\mathrm{End}_R(\M)):= \bigoplus_{j\geq i} \mathrm{End}_R(\M)_{\deg=j}\]
Note that the associated graded algebra $gr(\mathrm{End}_R(\M))$ is canonically identified with $\mathrm{End}_R(\M)$.

Choose a surjection $\mathrm{End}_R(\M)\rightarrow S$, which induces a compatible filtration on $S$. Since the filtrations on $\mathrm{End}_R(\M)$ and $S$ are \emph{left-limited}, a theorem of \cite[Cor.~7.6]{GradedRings} implies that 
\[ \mathrm{pdim}_{\mathrm{End}_R(\M)}(S) \leq \mathrm{pdim}_{\mathrm{End}_R(\M)}(gr(S)) \ . \]
The associated graded module $gr(S)$ is still finite dimensional over $\mathsf{k}$, and so it has a finite composition sequence as a $\mathbb{Z}$-graded $\mathrm{End}_R(\M)$-module, whose subquotients are $\mathbb{Z}$-graded simple $\mathrm{End}_R(\M)$-modules. As the projective dimension of $gr(S)$ is bounded by the maximum projective dimension of the subquotients (see e.g. the proof of Bass' theorem \cite[III, Prop.~6.7]{Bas68}), we deduce that 
\[ \mathrm{pdim}(S) \leq \mathrm{pdim}(gr(S)) \leq \sup_{\Delta} \{ \mathrm{pdim}(S_\Delta) \}= \mathrm{rank}(M) \]
where the last equality is Corollary \ref{pdim}.
\end{proof}

\begin{rem}
Our results carry over to the complete case: if  $\widehat R$ is the completion of  a pointed toric algebra $R$ at its  maximal graded ideal, then $\End_{\widehat R} (\widehat{R} \otimes \mathbb A)$ has global dimension equal to the dimension of $\widehat R$.
This follows from a general properties about the behavior of global dimension under completion; cf.~e.g.~\cite[3.4]{SVdB2}.
\end{rem}

\subsection{Global dimension of differential operators in positive characteristic}

Given a commutative algebra $R$ over a field $\k$, the algebra of \textbf{differential operators on $R$ over $\k$} is defined as (see e.g.~the discussion in \cite[2.1]{SmVdB})
\[ D_\k(R) := \bigcup_{n\in \mathbb{N}} D_\k^n(R) \]
where $D_\k^0(R) := R$ and 
\begin{align*}
D_\k^n(R) &:= \{ \theta \in \mathrm{End}_\k(R) \mid \text{for all }r\in R, [r,\theta] \in D_\k^{n-1}(R) \}  \ .
\end{align*}
If $R$ is the coordinate ring of a smooth complex variety, then $D_\mathbb{C}(R)$ is the classical ring of regular differential operators generated by multiplication operators and derivatives along vector fields.

When $\k$ has positive characteristic, the ring $D_\k(R)$ exhibits several behaviors without a natural geometric analog. Most relevant for us is the following.
\begin{lemma}\cite{Yekutieli}
Let $\k$ be a perfect field of characteristic $p$, and let $R$ be finitely generated as an $R^p$-module.
Then, as subalgebras of $\mathrm{End}_\k(R)$,
\[ \bigcup_{e\in \mathbb{N}} \mathrm{End}_{R^{p^e}}(R) = D_\k(R) \ . \]
\end{lemma}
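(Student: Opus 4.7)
The plan is to reformulate both sides in terms of the enveloping algebra $R^e := R \otimes_\k R$ acting on $\mathrm{End}_\k(R)$ via $(r \otimes s) \cdot \theta = r \cdot \theta(s \cdot -)$, so that $r \otimes 1 - 1 \otimes r$ acts as $\mathrm{ad}(r)$. Standard manipulation of the order filtration then yields $D^n_\k(R) = \{\theta : I^{n+1}\theta = 0\}$, where $I$ is the kernel of the multiplication map $R^e \twoheadrightarrow R$; analogously, $\mathrm{End}_{R^{p^e}}(R)$ is the annihilator of the ideal $K_e$ generated by $\{r^{p^e} \otimes 1 - 1 \otimes r^{p^e} : r \in R\}$, i.e.\ the kernel of $R^e \twoheadrightarrow R \otimes_{R^{p^e}} R$. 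The single computation driving everything is the Frobenius identity
\[ (r \otimes 1 - 1 \otimes r)^{p^e} = r^{p^e} \otimes 1 - 1 \otimes r^{p^e}, \]
valid since $R^e$ is a commutative $\mathbb F_p$-algebra.

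For the inclusion $D_\k(R) \subseteq \bigcup_e \mathrm{End}_{R^{p^e}}(R)$, I would argue as follows. Given $\theta \in D^n_\k(R)$, choose $e$ with $p^e \geq n+1$. Then for every $r \in R$ the element $(r \otimes 1 - 1 \otimes r)^{p^e}$ lies in $I^{p^e} \subseteq I^{n+1}$ and therefore annihilates $\theta$; by the Frobenius identity this says exactly $[r^{p^e}, \theta] = 0$, so $\theta$ commutes with all of $R^{p^e}$ and hence belongs to $\mathrm{End}_{R^{p^e}}(R)$.

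For the reverse inclusion I would show that for each $e$ there exists $N$ with $I^N \subseteq K_e$; equivalently, that the image $\bar I$ of $I$ in the quotient $R \otimes_{R^{p^e}} R$ is nilpotent. This is the only place where the finite-generation hypothesis enters. Fix $R^{p^e}$-module generators $y_1, \dots, y_m$ of $R$; writing a general $r \in R$ as $r = \sum s_i y_i$ with $s_i \in R^{p^e}$ and using $s_i \otimes 1 = 1 \otimes s_i$ in $R \otimes_{R^{p^e}} R$, the element $r \otimes 1 - 1 \otimes r$ reduces to $\sum s_i(y_i \otimes 1 - 1 \otimes y_i)$, so $\bar I$ is generated as an ideal by the $m$ elements $\overline{y_i \otimes 1 - 1 \otimes y_i}$, each $p^e$-nilpotent by the Frobenius identity. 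A pigeonhole argument in the commutative ring $R \otimes_{R^{p^e}} R$ then gives $\bar I^{m(p^e-1)+1} = 0$, proving $\mathrm{End}_{R^{p^e}}(R) \subseteq D_\k^{m(p^e-1)}(R)$. The main obstacle is really just keeping the enveloping-algebra bookkeeping clean; once that translation is set up, the entire argument collapses to the single characteristic-$p$ identity above, with finite generation used only to bound the nilpotency index.
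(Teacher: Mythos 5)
The paper states this lemma as a cited result (attributed to Yekutieli, with a pointer to the discussion in Smith--Van den Bergh) and does not include a proof of its own, so there is no in-paper argument to compare against. Your proof is correct and is essentially the standard one: the translation of the inductive commutator definition into the Grothendieck formulation $D^n_\k(R)=\{\theta : I^{n+1}\theta=0\}$ for $I=\ker(R\otimes_\k R\to R)$ is valid (note $I$ is generated as an ideal by the $\delta_r = r\otimes 1 - 1\otimes r$, since $\sum a_i\otimes b_i\in I$ rewrites as $-\sum(a_i\otimes 1)\delta_{b_i}$, and $\mathrm{Ann}(\theta)$ is an ideal because $R\otimes_\k R$ is commutative), the identification of $\mathrm{End}_{R^{p^e}}(R)$ with the annihilator of $K_e=\ker(R\otimes_\k R\to R\otimes_{R^{p^e}}R)$ is correct, the Frobenius identity $(r\otimes 1-1\otimes r)^{p^e}=r^{p^e}\otimes1-1\otimes r^{p^e}$ works in characteristic $p$ (including $p=2$), and the pigeonhole bound $\bar I^{m(p^e-1)+1}=0$ is right. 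The one hypothesis you use implicitly, that $R$ finite over $R^p$ implies $R$ finite over $R^{p^e}$ for every $e$, follows by iterating Frobenius on a generating set, and is worth a sentence if this were to be written up in full. This is a clean, self-contained proof of the cited lemma; no gaps.
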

\noindent That is, every $R^{p^e}$-linear map from $R$ to itself is a differential operator, and every differential operator is $R^{p^e}$-linear for sufficiently large $e$.
Note that $D_\k^e(R) $ almost never equals $\mathrm{End}_{R^{p^e}}(R)$, and $D_\k^e(R)$ is almost never an algebra.

\begin{thm} \label{Dmod}
Let $R$ be a pointed  toric ring of dimension $d$ over a perfect  field $\k$ of prime characteristic $p$. Then the ring $D_\k(R)$ of differential operators on $R$ has finite global dimension. 
\end{thm}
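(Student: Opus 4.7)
The plan is to apply the preceding lemma to write $D_\k(R) = \bigcup_e \End_{R^{p^e}}(R)$ and then invoke Corollary~\ref{q}. Perfectness of $\k$ provides a $\k$-algebra isomorphism $R \xrightarrow{\sim} R^{1/p^e}$, $r \mapsto r^{1/p^e}$, which carries the subring $R^{p^e} \subset R$ to the subring $R \subset R^{1/p^e}$. Consequently, $\End_{R^{p^e}}(R) \cong \End_R(R^{1/p^e}) =: A_e$ as $\k$-algebras, and $D_\k(R) = \bigcup_e A_e$. By Corollary~\ref{q}, there is an $e_0$ with $\gldim A_e = d := \dim R$ for every $e \geq e_0$.

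I will next analyze the transition map $A_e \hookrightarrow A_{e+1}$. Since $R^{1/p^{e+1}}$ and $R^{1/p^e}$ are both direct sums of conic $R$-modules (Proposition~\ref{ISOM}), the inclusion $R^{1/p^e} \hookrightarrow R^{1/p^{e+1}}$ splits as an $R$-module decomposition $R^{1/p^{e+1}} = R^{1/p^e} \oplus N_e$. The associated idempotent $\varepsilon_e \in A_{e+1}$ identifies $A_e$ with the corner $\varepsilon_e A_{e+1} \varepsilon_e$. For $e \geq e_0$, both $R^{1/p^e}$ and $R^{1/p^{e+1}}$ contain every conic isotype, so $\varepsilon_e$ is a \emph{full} idempotent ($A_{e+1} \varepsilon_e A_{e+1} = A_{e+1}$) and the inclusion $A_e \hookrightarrow A_{e+1}$ is a Morita equivalence. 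In particular $A_{e+1}$ is a finitely generated projective generator over $A_e$, so $D_\k(R) = \varinjlim_e A_e$ is a filtered colimit of finitely generated projective $A_e$-modules, hence flat on both sides over $A_e$ for every $e \geq e_0$.

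To conclude, I will bound $\gldim D_\k(R)$ by adapting the Bass-style reduction used in the proof of Theorem~\ref{gldim}: it suffices to bound the projective dimension of each simple (right) $D_\k(R)$-module. Any such simple $S$ is finitely generated over some $A_e$ (with $e \geq e_0$), and there it has projective dimension at most $d$ by Corollary~\ref{q}. The plan is to lift an $A_e$-projective resolution of $S$ to a $D_\k(R)$-projective resolution by applying the exact functor $D_\k(R) \otimes_{A_e} -$, which sends projectives to projectives and preserves exactness by flatness. The main technical obstacle is precisely to show that this lifted complex actually resolves $S$ itself---equivalently, that the counit $D_\k(R) \otimes_{A_e} S \to S$ is an isomorphism, or equivalently (via the Cartan--Eilenberg change-of-rings spectral sequence) that $\Ext_{D_\k(R)}^i(S,-) \cong \Ext_{A_e}^i(S,-)$ for $e$ large. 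I expect this to follow from the full-corner Morita structure above, which makes $A_e \hookrightarrow A_{e'}$ a Morita equivalence for all $e' \geq e \geq e_0$ and so effectively turns $A_e \hookrightarrow D_\k(R)$ into a Morita-localization: the $D_\k(R)$-action on a simple (or more generally, finitely generated) module is already determined by the action of a sufficiently large $A_e$. Once this is established, $\pd_{D_\k(R)}(S) \leq d$, which yields $\gldim D_\k(R) \leq d < \infty$, completing the proof.
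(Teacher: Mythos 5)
Your proposal diverges from the paper's argument at a crucial structural point, and the divergence introduces a genuine gap.

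The paper's proof is short: after passing through the Frobenius isomorphism $\End_R(R^{1/p^e}) \cong \End_{R^{p^e}}(R)$ and Corollary~\ref{q}, it applies Ber{\v s}te{\u\i}n's theorem on the right global dimension of a direct limit of rings, $\rgldim\bigl(\bigcup_e \Lambda_e\bigr) \leq 1 + \lim_e \rgldim(\Lambda_e)$, and symmetrizes to obtain $\gldim(D_\k(R)) \leq d+1$. You never invoke this black-box limit theorem; instead you try to prove flatness of $D_\k(R)$ over the terms $A_e$ and then run a Bass/change-of-rings argument, which would give the sharper bound $\gldim(D_\k(R)) \leq d$. Be warned that the paper explicitly flags this sharper claim as unresolved in the remark following Theorem~\ref{Dmod}: ``The general case would follow if we could show that for $q_0 < q$ sufficiently large, then $\End_R(R^{1/q})$ is flat $\ldots$ over $\End_R(R^{1/q_0})$.'' So your route, if it worked, would settle an open question.

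The gap is in the identification of the transition maps of the directed system. The system giving $D_\k(R)$ is $\{\End_{R^{p^e}}(R)\}_e$ with the \emph{unital} subring inclusions $\End_{R^{p^e}}(R) \subset \End_{R^{p^{e+1}}}(R)$ (both have unit $\mathrm{id}_R$). Transporting through the Frobenius isomorphisms, the induced transition map $A_e \to A_{e+1}$ is $\phi \mapsto F^{-1}\phi F$, conjugation by Frobenius $F\colon R^{1/p^{e+1}} \to R^{1/p^e}$; this is a unital ring embedding. It is \emph{not} the corner inclusion $\varepsilon_e A_{e+1}\varepsilon_e \hookrightarrow A_{e+1}$ arising from the $R$-module splitting $R^{1/p^{e+1}} = R^{1/p^e} \oplus N_e$: the corner inclusion sends the identity of $A_e$ to the idempotent $\varepsilon_e \neq 1$, so it is non-unital, and no inner automorphism of $A_{e+1}$ can turn it into the unital Frobenius-conjugation map. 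In short, while $A_e$ is abstractly isomorphic to the corner $\varepsilon_e A_{e+1}\varepsilon_e$, that is not how $A_e$ sits inside $A_{e+1}$ in the directed system defining $D_\k(R)$. Consequently the Morita/full-idempotent machinery you build on this identification does not apply to the actual transition maps, and the flatness of $D_\k(R)$ over $A_e$ (and hence the lifting of projective resolutions and the claimed $\Ext$-comparison) is left unjustified. You also acknowledge in the final paragraph that the key step has not been established, so the argument is incomplete even modulo the misidentification. Replacing the flatness strategy with Ber{\v s}te{\u\i}n's direct-limit bound, as the paper does, repairs the proof at the cost of the weaker conclusion $\gldim(D_\k(R)) \leq d+1$.
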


\begin{proof} Let $R^{1/p^e}$ denote the ring of $p^e$-th roots of elements in $R$. The Frobenius map  $F^e: R\rightarrow R^{p^e}$ induces an isomorphism 
\[ \mathrm{End}_R(R^{1/p^e})
\stackrel{\sim}{\longrightarrow} 
 \mathrm{End}_{R^{p^e}}(R) \ . \]
Consequently, Corollary \ref{q} implies that, for $e$ sufficiently large, 
\[ \gldim(\mathrm{End}_{R^{p^e}}(R)) = \gldim(\mathrm{End}_{R}(R^{1/p^e})) = d \ . \]
By a theorem of Ber{\v{s}}te{\u\i}n \cite{Berstein}, 
\[ \rgldim(D_\k(R)) = \mathrm{rgldim}\left(\bigcup_{e\in \mathbb{N}} \mathrm{End}_{R^{p^e}}(R)\right) \leq 1+ \lim_{e\in N}\left(\mathrm{rgldim}\left(\mathrm{End}_{R^{p^e}}(R)\right)\right) = d+1\]
A symmetric application of Ber{\v{s}}te{\u\i}n's theorem implies that $\lgldim(D_\k(R))\leq d+1$ as well, implying that $\gldim(D_\k(R))\leq d+1$.
\end{proof}

\begin{rem}
The proof implies that $\gldim(D_\k(R))\leq d+1$. We suspect that in fact  $\gldim(D_\k(R))= d$, but as yet have not proved it. 
If $R$ is smooth over $\k$, then Paul Smith's theorem \cite{PSmithCharpRegular} implies that $\gldim(D_\k(R))=d$. 
The general case would follow if we could show that for $q_0 < q$ sufficiently large, then $\End_R(R^{1/q})$ is flat (as a left or right) module  over  $\End_R(R^{1/q_0})$.\end{rem}

%%%%%%%%%%%
\section{Non-commutative crepant resolutions} \label{Sec7}
%%%%%%%%%%%%

Non-commutative {\it crepant} resolutions (NCCRs) were introduced by Michel Van den Bergh in his foundational paper  \cite{VanDenBergh}.  A natural question arises now that we have shown $\End_R(\M)$ is a non-commutative resolution: when are these resolutions crepant?

First we recall the definition.
\begin{defn}\label{NCCRdef}
Let $S$ be a normal Cohen-Macaulay domain of dimension $d$, and let $B$ be a finitely generated reflexive $S$-module. The algebra $\mathrm{End}_S(B)$ is a \textbf{non-commutative crepant resolution (NCCR)} of $S$ if every simple   $\mathrm{End}_{S}(B)$-module has projective dimension $d$.
\end{defn}

Van den Bergh first defined NCCRs only for Gorenstein normal domains $S$; in this case, he showed that $\End_S(B)$ is an NCCR if and only if  $\End_S(B)$ has finite global dimension and is Cohen-Macaulay as an $S$-module; see \cite[Lemma 4.2]{VanDenBergh}. 
 If we assume only that $S$ is a normal Cohen-Macaulay domain, Iyama and Wemyss showed that Van den Bergh's alternate characterization of an NCCR continues to be valid, as long as $S$ admits a canonical module \cite[2.17]{IyamaWemyss}.
For a discussion of the justifications and applications of the larger Cohen-Macaulay generality, consult \cite{IyamaWemyss,DaFI}. See also \cite{Leuschke} for a survey on NCCRs.

The main result of this section is the following characterization of when $\End_R(\M)$ is crepant:
\begin{thm}\label{simplicial}
For a toric algebra $R$ and a complete sum of conic modules $\M$ (such as $R^{1/q}$ for large enough $	q$), $\End_R(\M)$ is an NCCR of $R$ if and only if $R$ is a simplicial toric algebra.
\end{thm}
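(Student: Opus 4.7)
The strategy is to reduce the NCCR condition to the combinatorial question of whether every chamber of constancy contains a $0$-cell. By Definition \ref{NCCRdef} combined with Theorem \ref{gldim}, $\End_R(\M)$ is an NCCR if and only if every (graded) simple $\End_R(\M)$-module $S_\Delta$ has projective dimension exactly $d = \dim R$. By Corollary \ref{pdim}, $\pd(S_\Delta)$ equals the maximum codimension of an open cell in $\Delta$, so $\End_R(\M)$ is an NCCR precisely when every chamber of constancy $\Delta$ contains a $0$-cell of the CW decomposition, i.e., a cell $\tau_\Omega$ of codimension $d$ in the notation of \eqref{CellFormula}.

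For the $(\Leftarrow)$ direction, suppose $R$ is simplicial, so that $\Sigma_1 = \{n_1, \ldots, n_d\}$ is a basis of $N_\RR$. For any chamber $\Delta$ with defining integers $(d_1, \ldots, d_d)$, the linear system $\langle x, n_j\rangle = d_j$ has a unique solution $x^* \in M_\RR$. Taking $\Omega = \emptyset$ in \eqref{CellFormula}, the cell $\tau_\emptyset = \{x^*\}$ is a $0$-cell of $\Delta$. Hence every chamber has a $0$-cell, every graded simple module has projective dimension $d$, and $\End_R(\M)$ is an NCCR.

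For the $(\Rightarrow)$ direction, suppose $R$ is non-simplicial, so $t = |\Sigma_1| > d$. The plan is to exhibit a chamber $\Delta$ with no $0$-cell. A candidate $0$-cell $\tau_\Omega$ requires a subset $\Omega \subsetneq \Sigma_1$ such that $\{n_j : j \not\in \Omega\}$ has rank $d$, together with the unique solution $x^*$ to the system $\langle x, n_j\rangle = d_j$ ($j \not\in \Omega$) satisfying the strict inequalities $d_j - 1 < \langle x^*, n_j\rangle < d_j$ for every $j \in \Omega$. Because the primitive generators satisfy a nontrivial $\QQ$-linear relation (since $t > d$), each value $\langle x^*, n_j\rangle$ for $j \in \Omega$ is a rational combination of the integers $\{d_k : k \not\in \Omega\}$ forced by the relation. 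I pick $v \in M_\RR$ so that the resulting tuple $(d_j)$ makes these values coincide with an endpoint $d_j - 1$ or $d_j$ for every candidate $\Omega$, violating the strict inequality. A prototype is the chamber adjacent to the origin chamber across the facet indexed by a non-basic generator $n_\ell \in \Sigma_1 \setminus B$ (where $B \subseteq \Sigma_1$ is a basic subset of size $d$): this yields $d_j = 0$ for $j \in B$ and $d_\ell = 1$, and one checks that every $\tau_\Omega$ either corresponds to an inconsistent system or has $\langle x^*, n_j\rangle$ equal to an integer endpoint for some $j \in \Omega$; the tetrahedral chambers of Example \ref{ConeOverSquare} are the model case.

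The main obstacle is verifying this construction succeeds uniformly across all non-simplicial $R$: for every basic subset $B \subseteq \Sigma_1$ and every admissible $\Omega$, one must show the coefficients expressing each $n_j \in \Omega$ in the basis $B$ produce a value $\langle x^*, n_j\rangle$ lying outside the open interval $(d_j - 1, d_j)$, even when those coefficients are non-integer rationals. This reduces to a finite combinatorial check involving the primitivity of $\Sigma_1$ together with the freedom to further translate the chosen chamber; equivalently, one can invoke the cell structure on the quotient torus $M_\RR/M$, where having $t > d$ means the extra facet hyperplanes subdivide lattice parallelepipeds into pieces, forcing some chamber to inherit no $0$-cell in its skeleton.
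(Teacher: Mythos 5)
Your reduction to the statement ``every chamber of constancy contains a $0$-cell'' conflates two different claims and leaves a gap in the simplicial direction. Definition \ref{NCCRdef} requires \emph{every} simple $\End_R(\M)$-module---not only the graded ones---to have projective dimension $d$. For the non-simplicial direction this is harmless: a chamber with no $0$-cell produces one graded simple of projective dimension $< d$ by Corollary \ref{pdim}, and that already violates the NCCR condition. But in the simplicial direction, showing all graded simples $S_\Delta$ have $\pd(S_\Delta) = d$ only gives $\pd(S) \le d$ for a general simple $S$ (via the filtration argument in Theorem \ref{gldim}); it does not by itself exclude a non-graded simple of strictly smaller projective dimension. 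The paper closes this gap with a depth argument that hinges on Lemma \ref{Lem:simplicial}: in the simplicial case $\Hom_R(A_\Delta, A_{\Delta'})$ is itself conic, hence $\End_R(\M)$ is a Cohen--Macaulay $R$-module of depth $d$, and running the depth inequalities along a finite projective resolution of any finite-length simple $S$ forces $\pd(S) = d$. Your proposal has no analogue of this Cohen--Macaulayness input, so the $(\Leftarrow)$ implication is not established.

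For $(\Rightarrow)$, you correctly identify the target (produce a chamber without a $0$-cell) and the prototype chamber (the one adjacent to the free chamber across the facet of a non-basic normal), but you then explicitly concede that the verification ``across all non-simplicial $R$'' is the unresolved obstacle. That verification is exactly the content of the paper's lemma: in the base case $|\Sigma_1| = d+1$, one writes the linear dependence $\sum_i (-1)^i \det(\widehat{n_i})\, n_i = 0$, picks $i$ maximizing $|\det(\widehat{n_i})|$, and shows that any hypothetical $0$-cell $v$ of the resulting chamber would need $\langle v, n_j\rangle = \pm \det(\widehat{n_i})/\det(\widehat{n_j})$ for some $j$, which rounds up to a nonzero integer, contradicting the chamber's defining inequalities. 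The general case then reduces to this one by an induction on $|\Sigma_1|$ that drops a generator and pushes to an extreme chamber in the $n_{i+1}$-direction. Your closing appeal to the cell structure on $M_\RR/M$ and ``extra hyperplanes subdividing parallelepipeds'' does not substitute for this computation---subdivision alone does not force any particular chamber to lose its $0$-cells. As written, the proposal is an outline with the two load-bearing steps (the Cohen--Macaulayness of $\End_R(\M)$ in the simplicial case, and the Cramer's-rule/induction argument in the non-simplicial case) missing.
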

\noindent This theorem will be proven in two parts, as Propositions \ref{simplicialNCCR} and \ref{nonsimplicialnonNCCR}.

\subsection{Simplicial toric algebras} 

Much of the story of this paper simplifies significantly in the simplicial case. 
\begin{prop}\label{simp}
If $R$ is simplicial, then every chamber of constancy $\Delta\subset M_\mathbb{R}$ contains a unique $0$-cell.
\end{prop}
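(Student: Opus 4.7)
The plan is to use the explicit parametrization of open cells of a chamber given by formula \eqref{CellFormula}. Recall that the open cells of $\Delta$ are indexed by subsets $\Omega \subseteq \Sigma_1$, where the cell $\tau_\Omega$ consists of those $x \in M_{\mathbb{R}}$ satisfying $d_j - 1 < \langle x, n_j \rangle < d_j$ for $n_j \in \Omega$ and $\langle x, n_j \rangle = d_j$ for $n_j \notin \Omega$, with $(d_1,\dots,d_t)$ the integer list determining $\Delta$.

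First I would observe that $\tau_\Omega$ is $0$-dimensional precisely when the system of linear equalities defining it pins down a single point in $M_{\mathbb{R}}$; equivalently, when the subset $\Sigma_1 \setminus \Omega \subset N_{\mathbb{R}}$ spans $N_{\mathbb{R}} = M_{\mathbb{R}}^*$. (More generally, the codimension of $\tau_\Omega$ in $M_{\mathbb{R}}$ equals the rank of $\{n_j : n_j \notin \Omega\}$ in $N_{\mathbb{R}}$.)

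Now I invoke the simplicial hypothesis: $\Sigma_1 = \{n_1, \dots, n_d\}$ is linearly independent in $N_{\mathbb{R}}$, with $d = \mathrm{rank}(M) = \dim M_{\mathbb{R}}$, by definition of simplicial. Any proper subset of $\Sigma_1$ then has rank strictly less than $d$ and cannot span $N_{\mathbb{R}}$. Hence the only $\Omega \subseteq \Sigma_1$ yielding a $0$-dimensional cell is $\Omega = \emptyset$, and for this choice $\tau_\emptyset$ consists of the unique point $x_0 \in M_{\mathbb{R}}$ determined by the full system $\langle x, n_j \rangle = d_j$ for $j = 1, \dots, d$; unique solvability follows from $\Sigma_1$ being a basis.

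Finally, to see that $\tau_\emptyset$ really is a cell of $\Delta$, note that $x_0$ automatically satisfies the defining inequalities $\lceil \langle v, n_j \rangle \rceil - 1 < \langle x_0, n_j \rangle \leq \lceil \langle v, n_j \rangle \rceil$ from Proposition \ref{Equations}, since it achieves the closed upper bound; there are no strict inequalities to verify because $\Omega = \emptyset$. This establishes existence and uniqueness of a $0$-cell in $\Delta$. The only conceptual point --- ruling out spurious $0$-cells coming from proper subsets of $\Sigma_1$ --- is handled directly by the linear independence of $\Sigma_1$, so there is no serious obstacle to the argument.
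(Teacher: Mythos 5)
Your proof is correct and takes essentially the same approach as the paper: both identify the prospective $0$-cell as the unique solution of the system $\langle x, n_j\rangle = d_j$ (all $j$), with existence and uniqueness coming from $\Sigma_1$ being a basis. The paper is a bit terser, characterizing a $0$-cell of the global decomposition directly as a point where all $\langle v,n_i\rangle$ are integers, whereas you work through the explicit cell parametrization~\eqref{CellFormula}; the mathematical content is the same.
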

\begin{proof}
%Let $d_i := \lceil\langle w,n_i\rangle\rceil$ for any $w\in \Delta$.
Since $|\Sigma_1|=\dim(M_\mathbb{R})$, a $0$-cell in the CW decomposition of $M_\mathbb{R}$ is a point $v$ such that $\langle v,n_i\rangle \in \mathbb{Z}$ for all $i$. For this $0$-cell to be in $\Delta$, we must have $\langle v,n_i\rangle = d_i$ for all $i$, where $d_i := \lceil\langle w,n_i\rangle\rceil$ for any $w\in \Delta$. Since $\Sigma_1$ is a basis, there is a unique solution $v$ to this system.
\end{proof}

\begin{lemma} \label{Lem:simplicial}
Let $R$ be a simplicial toric algebra. For all $\Delta, \Delta'$, 
\[ \mathrm{Hom}_R(A_\Delta,A_{\Delta'}) = A_{w-v} \]
where $v\in \Delta$ and $w\in \Delta'$ are the unique $0$-cells in the respective chambers of constancy.
\end{lemma}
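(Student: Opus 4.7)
The plan is to compare the explicit monomial descriptions of $\mathrm{Hom}_R(A_\Delta, A_{\Delta'})$ and $A_{w-v}$ provided by Proposition \ref{ConicModProp}. Since $R$ is simplicial, Proposition \ref{simp} guarantees unique $0$-cells $v \in \Delta$ and $w \in \Delta'$, and by definition of a $0$-cell these points satisfy $\langle v, n_i \rangle, \langle w, n_i \rangle \in \mathbb{Z}$ for every $n_i \in \Sigma_1$. Consequently all the ceilings appearing in Proposition \ref{ConicModProp}(2) become trivial: $A_v$ is spanned by the $x^{m'}$ with $\langle m', n_i\rangle \geq \langle v, n_i\rangle$ for all $i$, and similarly for $A_w$ and $A_{w-v}$. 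By Proposition \ref{ConicModProp}(3), $\mathrm{Hom}_R(A_\Delta, A_{\Delta'})$ is the span of the $x^m$ such that $m + (C+v)\cap M \subseteq (C+w)\cap M$.

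The easy inclusion $A_{w-v}\subseteq \mathrm{Hom}_R(A_\Delta,A_{\Delta'})$ is immediate: if $\langle m, n_i\rangle \geq \langle w, n_i\rangle - \langle v, n_i\rangle$ for all $i$ and $m' \in M$ satisfies $\langle m', n_i \rangle \geq \langle v, n_i \rangle$ for all $i$, then $\langle m+m', n_i\rangle \geq \langle w, n_i\rangle$ for all $i$, so $m+m' \in (C+w)\cap M$.

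For the reverse inclusion, fix $x^m \in \mathrm{Hom}_R(A_\Delta, A_{\Delta'})$ and an index $j$. The strategy is to produce a lattice point $m'_j \in M \cap (C+v)$ that saturates the $j$-th inequality, i.e.\ $\langle m'_j, n_j \rangle = \langle v, n_j \rangle$; applying $m + m'_j \in M \cap (C+w)$ to the functional $n_j$ will then yield $\langle m, n_j \rangle \geq \langle w, n_j \rangle - \langle v, n_j \rangle$, which (ranging over $j$) places $x^m$ in $A_{w-v}$. To construct $m'_j$: primitivity of $n_j \in N$ makes $\langle -, n_j \rangle : M \to \mathbb{Z}$ surjective, so some $m_0 \in M$ satisfies $\langle m_0, n_j \rangle = \langle v, n_j \rangle$. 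The relative interior of the facet of $C$ cut out by $\langle -, n_j \rangle = 0$ is a nonempty open cone in the hyperplane $\ker(n_j)$, and $M \cap \ker(n_j)$ is a full-rank sublattice of that hyperplane, so this interior contains some $m^{\sharp} \in M$ with $\langle m^{\sharp}, n_j \rangle = 0$ and $\langle m^{\sharp}, n_i \rangle > 0$ for $i \neq j$. Then $m'_j := m_0 + N\, m^{\sharp}$ lies in $M \cap (C+v)$ for $N \gg 0$ while preserving $\langle m'_j, n_j \rangle = \langle v, n_j \rangle$.

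The only mild obstacle is the construction of $m'_j$, which reduces to the elementary density argument just sketched; once this is in hand the two inclusions are purely formal manipulations with the half-space descriptions. It is worth noting that simpliciality enters the argument only through the existence of the $0$-cells $v$ and $w$ (Proposition \ref{simp}); the lattice-point argument itself uses only that each $n_j \in \Sigma_1$ is a primitive facet normal.
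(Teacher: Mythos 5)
Your proof is correct and follows essentially the same route as the paper: show the easy inclusion $A_{w-v}\subseteq\Hom_R(A_\Delta,A_{\Delta'})$ by translation, and for the reverse inclusion apply a purported $x^m\in\Hom_R(A_\Delta,A_{\Delta'})$ to a monomial $x^{m'_j}\in A_\Delta$ saturating the $j$-th facet inequality so as to recover $\langle m,n_j\rangle\geq\langle w-v,n_j\rangle$. The one difference is that the paper merely asserts the existence of such saturating lattice points $m'_j$, whereas you supply the (correct) elementary construction via primitivity of $n_j$ and the full-rank sublattice $M\cap\ker(n_j)$.
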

\begin{proof}
If $u\in C+(w-v)$, then $u+(C+v)\subset C+w$. Therefore, for any $x^u\in A_{w-v}$, $x^uA_v\subset A_w$ and so $x^u\in \mathrm{Hom}_R(A_\Delta,A_{\Delta'})$.

Consider $u\in M$ such that $x^u\in \mathrm{Hom}_R(A_\Delta,A_{\Delta'})$. 
For each $n_i\in \Sigma_1$, there exist $m\in M\cap (C+v)$ and $m'\in M\cap (C+w)$ such that $\langle m,n_i\rangle = \langle v,n_i\rangle$ and $\langle m',n_i\rangle = \langle w,n_i\rangle$. Since $x^u\cdot x^m = x^{m+u}\in A_w$, 
\[ \langle m+u,n_i\rangle \geq \langle w,n_i\rangle 
\Rightarrow \langle v,n_i\rangle + \langle u,n_i\rangle \geq \langle w,n_i\rangle 
\Rightarrow \langle u,n_i\rangle \geq \langle w-v,n_i\rangle \ . \]
Therefore, $x^u\in A_{w-v}$.
\end{proof}

%\begin{rem}
%The reader is cautioned that, for more general toric algebras, the homomorphisms between conic modules are not always conic! This has proven to be a tempting misconception. In Example \ref{ConeOverSquare}, one sees that $\Hom_R(A_1,A_2)$ is not even a CM-module over $R$.
%\end{rem}

\begin{prop} \label{simplicialNCCR}
Let $R$ be a simplicial toric algebra of Krull dimension $d$, and let $\M$ be a complete sum of conic modules. Then $\mathrm{End}_R(\M)$ is an NCCR of $R$. %In particular, $\mathrm{End}_R(R^{1/q})$ is an NCCR of $R$ for $q$ sufficiently large.
\end{prop}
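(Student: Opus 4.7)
The plan is to verify the NCCR condition by combining the finite global dimension from Theorem \ref{gldim} with the additional structure that simpliciality provides. By Proposition \ref{simp}, every chamber of constancy $\Delta$ in the simplicial case contains a unique $0$-cell, so the maximum codimension of cells appearing in $\Delta$ is exactly $d = \dim R$. Applying Corollary \ref{pdim}, every graded simple $\End_R(\M)$-module $S_\Delta$ therefore has projective dimension exactly $d$.

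The main obstacle is that Definition \ref{NCCRdef} demands projective dimension $d$ for \emph{every} simple $\End_R(\M)$-module, not just the graded ones. To handle non-graded simples uniformly, the strategy is to establish that $\End_R(\M)$ is Cohen-Macaulay as an $R$-module, and then invoke a standard alternate characterization of NCCRs. For the Cohen-Macaulay step, Lemma \ref{Lem:simplicial} is crucial: each Hom-space $\mathrm{Hom}_R(A_\Delta, A_{\Delta'})$ equals a conic module $A_{w-v}$, where $v$ and $w$ are the unique $0$-cells of $\Delta$ and $\Delta'$. Each such conic module is Cohen-Macaulay by Corollary \ref{CM}, so the finite direct sum
\[ \End_R(\M) \;=\; \bigoplus_{i,j} \mathrm{Hom}_R(A_{\Delta_i}, A_{\Delta_j})^{d_id_j} \]
is Cohen-Macaulay as an $R$-module. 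Note that this argument uses simpliciality in an essential way; without it, $\mathrm{Hom}_R(A_\Delta, A_{\Delta'})$ need not even be Cohen-Macaulay, as the paper's Remark following Proposition \ref{ConicModProp} warns.

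With Cohen-Macaulayness of $\End_R(\M)$ and its finite global dimension $d$ both in hand, the conclusion follows from the Iyama--Wemyss characterization \cite[2.17]{IyamaWemyss}: for a normal Cohen-Macaulay domain $R$ admitting a canonical module (automatic for our finitely generated toric $\k$-algebra), an endomorphism ring of a reflexive module is an NCCR precisely when it is Cohen-Macaulay as an $R$-module and has finite global dimension equal to $\dim R$. Applied to $\M$ (which is reflexive since each conic module is maximal Cohen-Macaulay of rank one over the normal ring $R$), this produces the NCCR property in the sense of Definition \ref{NCCRdef}. The hardest conceptual step is recognizing that simpliciality is exactly what forces the Hom-spaces between conic modules to themselves be conic, via Lemma \ref{Lem:simplicial}; everything else is either already assembled in the paper or a direct invocation of established noncommutative commutative-algebra machinery.
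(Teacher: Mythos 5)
Your proof is correct, and it takes a genuinely different route from the paper's at the final, crucial step. You and the paper agree on the two preliminary ingredients: Lemma \ref{Lem:simplicial} shows $\End_R(\M)$ is a finite direct sum of conic modules, hence Cohen-Macaulay over $R$ by Corollary \ref{CM}; and Theorem \ref{gldim} (together with Proposition \ref{simp} and Corollary \ref{pdim}) gives global dimension $d$. The divergence is in how you pass from ``all \emph{graded} simples have projective dimension $d$'' to ``\emph{all} simples have projective dimension $d$,'' which is what Definition \ref{NCCRdef} actually requires. You close the gap by invoking the Van den Bergh/Iyama--Wemyss alternate characterization \cite[Lemma 4.2]{VanDenBergh}, \cite[2.17]{IyamaWemyss} — that for a normal CM domain with canonical module, CM-ness of the endomorphism ring plus finite global dimension is equivalent to the NCCR condition — and this is a legitimate and clean shortcut since the paper itself cites that equivalence in the discussion preceding Definition \ref{NCCRdef}. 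The paper, by contrast, keeps the argument self-contained: it proves homological homogeneity directly with a depth argument (if a simple $S$ had a length-$k<d$ projective resolution, each projective has depth $d$ since it is a summand of the depth-$d$ module $\End_R(\M)$, while $S$ has depth $0$ as a finite-length $R$-module, contradicting \cite[Cor.~18.6]{Eis95}). In effect, the paper re-derives the relevant piece of the Iyama--Wemyss equivalence in this special case, staying true to the paper's elementary philosophy, while you trade that for a citation. One small remark: in your write-up, the paragraph about graded simples having projective dimension $d$ becomes logically redundant once you invoke Iyama--Wemyss, since that equivalence already forces every simple to have projective dimension $d$ from CM-ness plus finite global dimension alone; it does no harm, but isn't carrying weight in your argument.
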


\begin{proof}
Assume $R$ is simplicial. Then by Lemma \ref{Lem:simplicial}, $\End_R(\M)$ is a sum of conic modules, hence Cohen-Macaulay. On the other hand, each chamber of constancy contains a zero-cell, so all \emph{graded} simple right $\End_R(\M)$-modules have projective dimension $d$ by Corollary \ref{pdim}. We show that $\End_R(\M)$ is homologically homogeneous, that is, \emph{any} simple $S$ of $\End_R(\M)$ has projective dimension $d$. Therefore, assume that $S$ has projective dimension $k < d$. This means that we can find an $\End_R(\M)$-projective resolution
\[ 0 \xrightarrow{} P_k \xrightarrow{} \cdots \xrightarrow{} P_0 \xrightarrow{} S \xrightarrow{} 0 \ ,\]
where each $P_i$ is a finitely generated projective $\End_R(\M)$-module.
Since by Lemma \ref{Lem:simplicial} $\End_R(\M)$ has depth $d$ as an $R$-module, any $R$-direct summand must also have depth equal to $d$. %can use depth lemma or cf mathoverflow question
Thus, since any $P_i$ is a finite direct sum of direct summands of $\End_R(\M)$, it follows that $\mathrm{depth}_R(P_i )=d$ for all $i=0, \ldots, k$. With the same argument as in the proof of Theorem \ref{gldim} we see that $S$ has finite length as an $R$-module, which implies that $\mathrm{depth}_R(S)=0$. But the behavior of depth in exact sequences is well-understood: this implies $\mathrm{depth}_R(P_k)=k <d$ (see e.g., \cite[Cor.18.6]{Eis95}), a contradiction. 
\end{proof}

\begin{rem}
When $R$ is simplicial, $R$ is isomorphic to the ring of invariant polynomials $\mathsf{k}[x_1,\ldots , x_d]^G$ for the action of a finite abelian group $G$ on $\mathsf{k}^d$. As an $R$-module, the ring $\M=\mathsf{k}[x_1,\ldots , x_d]$ is a complete sum of conic modules, and the corresponding endomorphism ring $\mathrm{End}_R(\M)$ is isomorphic to the skew group ring $\mathsf{k}[x_1,\ldots , x_d]*G$. In \cite[Section 4]{Craw-Quintero-Velez}, so-called \emph{cellular resolutions} are constructed for these skew group rings. However, it is not clear how to extend their construction to the non-simplicial case.
\end{rem}

%\eleonore{Let us note that for $R$ simplicial, a finite abelian group $G \subseteq \mathrm{GL}(d, \k)$ acts on $\k^d$ and has coordinate ring $R=\k[x_1,\ldots, x_d]^G$. The endomorphism ring  $\End_R(\M)$ is then skew group algebra $\k[x_1,\ldots, x_d]*G$. In \cite[Section 4]{Craw-Quintero-Velez} so-called cellular resolutions are constructed for these skew group rings. However, it is not clear how to extend their construction to the non-simplicial case.}

\begin{ex}
Let $R$ be a singular toric algebra of dimension $2$. Without loss of generality (see \cite[2.2]{Fulton}), the cone $C$ can  be taken to be generated by $e_1$ and $ke_1 + me_2$, with $0 < k < m$ and $(k,m)=1$, and where $e_i$ are the two standard basis vectors of $M  = \mathbb Z^2$. Thus $C$ is simplicial and the inner normals are $n_1=e_2$ and $n_2=me_1-ke_2$. 

Let us first determine all isomorphism classes of conic modules: since the half-open square $(-1,0]^2$ is a fundamental domain for the action of $M$, we find a chamber of constancy for each isomorphism class in it, by Proposition \ref{isom}.
There are $m$ different chambers,  which can be indexed by their unique  $0$-cells $v_i=-\frac{i}{m}e_1$ for $0 \leq i <m$ (Proposition \ref{simp}). In particular, the chamber of constancy $\Delta_i:=\Delta_{v_i}$ consists of all $x=(x_1,x_2)$ such that
\[ -1 < x_2 \leq 0 \quad \text{ and } -i-1 < mx_1 -kx_2 \leq -i \ . \]
Denote the corresponding conic module by $A_{i}$. A general chamber of constancy can be indexed by its $0$-cell $v_{ij}=(-\frac{i}{m},j)$ for $i,j \in \ZZ$. The corresponding conic module is then $A_{ij}:=A_{v_{ij}}$. Now we are ready to describe the conic complex $\mathcal{K}^\bullet_{\Delta_{i}}$: it is of the form
\begin{equation} \label{conic-res-dim2}A_{i-k-1,1} \longrightarrow{} A_{{i-1}}\oplus A_{i-k,1} \longrightarrow{}  A_{i}   \ ,
 \end{equation}
with the first index$\mod m$. It is easy to see that $A_{i-k,1} \simeq A_{i-k}$ and $A_{i-k-1,1} \simeq A_{i-k-1}$. \\ %(up to degree shift). \\
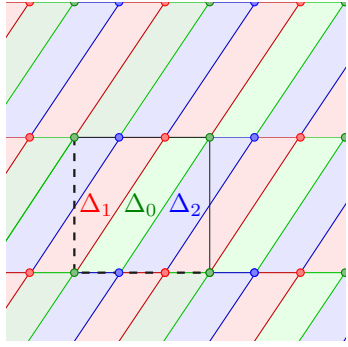
\begin{figure}[h!]
\begin{tikzpicture}[baseline=(current bounding box.center),scale=1.8]
	\clip (-1.5,-1.5) rectangle (1.,1.);
		\begin{scope}[xshift=1.66cm]
	\path[fill=red!10] (-1,-1) to (-0.66,-1) to (0,0) to (0.66,1) to (1,1) to (0.33,0) to (0,0) to (-0.33,0) to (-1,-1);
	\draw[red!75!black] (-0.66,-1) to (0,0) to (-0.33,0);
	\draw[red!75!black] (0.66,1) to (1,1) to (0.33,0);
	\end{scope}
	\begin{scope}[xshift=1.33cm]
	\path[fill=blue!10] (-1,-1) to (-0.66,-1) to (0,0) to (0.66,1) to (1,1) to (0.33,0) to (0,0) to (-0.33,0) to (-1,-1);
		\draw[blue!75!black] (-0.66,-1) to (0,0) to (-0.33,0);
	\draw[blue!75!black] (0.66,1) to (1,1) to (0.33,0);
	\end{scope}
	\begin{scope}[xshift=1cm]
	\path[fill=green!10] (-1,-1) to (-0.66,-1) to (0,0) to (0.66,1) to (1,1) to (0.33,0) to (0,0) to (-0.33,0) to (-1,-1);
	\draw[green!75!black] (-0.66,-1) to (0,0) to (-0.33,0);
	\draw[green!75!black] (0.66,1) to (1,1) to (0.33,0);
	\end{scope}
	\begin{scope}[xshift=0.66cm]
	\path[fill=red!10] (-1,-1) to (-0.66,-1) to (0,0) to (0.66,1) to (1,1) to (0.33,0) to (0,0) to (-0.33,0) to (-1,-1);
		\draw[red!75!black] (-0.66,-1) to (0,0) to (-0.33,0);
	\draw[red!75!black] (0.66,1) to (1,1) to (0.33,0);
	\end{scope}
	\begin{scope}[xshift=0.33cm]
	\path[fill=blue!10] (-1,-1) to (-0.66,-1) to (0,0) to (0.66,1) to (1,1) to (0.33,0) to (0,0) to (-0.33,0) to (-1,-1);
	\draw[blue!75!black] (-0.66,-1) to (0,0) to (-0.33,0);
	\draw[blue!75!black] (0.66,1) to (1,1) to (0.33,0);
	\end{scope}
	\begin{scope}[yshift=0cm]  %the R-cell
	\path[fill=green!10] (-1,-1) to (-0.66,-1) to (0,0) to (0.66,1) to (1,1) to (0.33,0) to (0,0) to (-0.33,0) to (-1,-1);	
		\draw[green!75!black] (-0.66,-1) to (0,0) to (-0.33,0);
	\draw[green!75!black] (0.66,1) to (1,1) to (0.33,0);
	%\draw[red!75!black] (-2.5,.5) to (-2,0) to (-1.5,.5) to (-1,0) to (-.5,.5) to (0,0) to (.5,.5) to (1,0) to (1.5,.5) to (2,0) to (2.5,.5);
	\end{scope}
	\begin{scope}[xshift=-0.33cm]
	\path[fill=red!10] (-1,-1) to (-0.66,-1) to (0,0) to (0.66,1) to (1,1) to (0.33,0) to (0,0) to (-0.33,0) to (-1,-1);
	%\draw[blue!75!black] (-2.5,.5) to (-2,1) to (-1.5,.5) to (-1,1) to (-.5,.5) to (0,1) to (.5,.5) to (1,1) to (1.5,.5) to (2,1) to (2.5,.5);
		\draw[red!75!black] (-0.66,-1) to (0,0) to (-0.33,0);
	\draw[red!75!black] (0.66,1) to (1,1) to (0.33,0);
	\end{scope}
	\begin{scope}[xshift=-0.66cm]
	\path[fill=blue!10] (-1,-1) to (-0.66,-1) to (0,0) to (0.66,1) to (1,1) to (0.33,0) to (0,0) to (-0.33,0) to (-1,-1);
	\draw[blue!75!black] (-0.66,-1) to (0,0) to (-0.33,0);
	\draw[blue!75!black] (0.66,1) to (1,1) to (0.33,0);
	\end{scope}
		\begin{scope}[xshift=-1cm]
	\path[fill=ggreen!10] (-1,-1) to (-0.66,-1) to (0,0) to (0.66,1) to (1,1) to (0.33,0) to (0,0) to (-0.33,0) to (-1,-1);
	\draw[green!75!black] (-0.66,-1) to (0,0) to (-0.33,0);
		\draw[green!75!black] (-0.66,-1) to (0,0) to (-0.33,0);
	\draw[green!75!black] (0.66,1) to (1,1) to (0.33,0);
	\end{scope}
		\begin{scope}[xshift=-1.33cm]
	\path[fill=red!10] (-1,-1) to (-0.66,-1) to (0,0) to (0.66,1) to (1,1) to (0.33,0) to (0,0) to (-0.33,0) to (-1,-1);
		\draw[red!75!black] (-0.66,-1) to (0,0) to (-0.33,0);
	\draw[red!75!black] (0.66,1) to (1,1) to (0.33,0);
	\end{scope}
	\begin{scope}[xshift=-1.66cm]
	\path[fill=blue!10] (-1,-1) to (-0.66,-1) to (0,0) to (0.66,1) to (1,1) to (0.33,0) to (0,0) to (-0.33,0) to (-1,-1);
	\draw[blue!75!black] (-0.66,-1) to (0,0) to (-0.33,0);
	\draw[blue!75!black] (0.66,1) to (1,1) to (0.33,0);
	\end{scope}
		\begin{scope}[xshift=-2cm]
	\path[fill=ggreen!10] (-1,-1) to (-0.66,-1) to (0,0) to (0.66,1) to (1,1) to (0.33,0) to (0,0) to (-0.33,0) to (-1,-1);
		\draw[green!75!black] (-0.66,-1) to (0,0) to (-0.33,0);
	\draw[green!75!black] (0.66,1) to (1,1) to (0.33,0);
	\end{scope}
			\begin{scope}[xshift=-2.33cm]
	\path[fill=red!10] (-1,-1) to (-0.66,-1) to (0,0) to (0.66,1) to (1,1) to (0.33,0) to (0,0) to (-0.33,0) to (-1,-1);
		\draw[red!75!black] (-0.66,-1) to (0,0) to (-0.33,0);
	\draw[red!75!black] (0.66,1) to (1,1) to (0.33,0);
	\end{scope}
	\begin{scope}[xshift=1.66cm,yshift=-2cm]
	\path[fill=red!10] (-1,-1) to (-0.66,-1) to (0,0) to (0.66,1) to (1,1) to (0.33,0) to (0,0) to (-0.33,0) to (-1,-1);
		\draw[red!75!black] (-0.66,-1) to (0,0) to (-0.33,0);
	\draw[red!75!black] (0.66,1) to (1,1) to (0.33,0);
	\end{scope}
	\begin{scope}[xshift=1.33cm,yshift=-2cm]
	\path[fill=blue!10] (-1,-1) to (-0.66,-1) to (0,0) to (0.66,1) to (1,1) to (0.33,0) to (0,0) to (-0.33,0) to (-1,-1);
	\draw[blue!75!black] (-0.66,-1) to (0,0) to (-0.33,0);
	\draw[blue!75!black] (0.66,1) to (1,1) to (0.33,0);
	\end{scope}
	\begin{scope}[xshift=1cm,yshift=-2cm]
	\path[fill=green!10] (-1,-1) to (-0.66,-1) to (0,0) to (0.66,1) to (1,1) to (0.33,0) to (0,0) to (-0.33,0) to (-1,-1);
		\draw[green!75!black] (-0.66,-1) to (0,0) to (-0.33,0);
	\draw[green!75!black] (0.66,1) to (1,1) to (0.33,0);
	\end{scope}
	\begin{scope}[xshift=0.66cm,yshift=-2cm]
	\path[fill=red!10] (-1,-1) to (-0.66,-1) to (0,0) to (0.66,1) to (1,1) to (0.33,0) to (0,0) to (-0.33,0) to (-1,-1);
		\draw[red!75!black] (-0.66,-1) to (0,0) to (-0.33,0);
	\draw[red!75!black] (0.66,1) to (1,1) to (0.33,0);
	\end{scope}
	\begin{scope}[xshift=0.33cm,yshift=-2cm]
	\path[fill=blue!10] (-1,-1) to (-0.66,-1) to (0,0) to (0.66,1) to (1,1) to (0.33,0) to (0,0) to (-0.33,0) to (-1,-1);
	\draw[blue!75!black] (-0.66,-1) to (0,0) to (-0.33,0);
	\draw[blue!75!black] (0.66,1) to (1,1) to (0.33,0);
	\end{scope}
	\begin{scope}[yshift=-2cm]  %the R-cell
	\path[fill=green!10] (-1,-1) to (-0.66,-1) to (0,0) to (0.66,1) to (1,1) to (0.33,0) to (0,0) to (-0.33,0) to (-1,-1);	
		\draw[green!75!black] (-0.66,-1) to (0,0) to (-0.33,0);
	\draw[green!75!black] (0.66,1) to (1,1) to (0.33,0);
	\end{scope}
	\begin{scope}[xshift=-0.33cm,yshift=-2cm]
	\path[fill=red!10] (-1,-1) to (-0.66,-1) to (0,0) to (0.66,1) to (1,1) to (0.33,0) to (0,0) to (-0.33,0) to (-1,-1);
		\draw[red!75!black] (-0.66,-1) to (0,0) to (-0.33,0);
	\draw[red!75!black] (0.66,1) to (1,1) to (0.33,0);
	\end{scope}
	\begin{scope}[xshift=-0.66cm,yshift=-2cm]
	\path[fill=blue!10] (-1,-1) to (-0.66,-1) to (0,0) to (0.66,1) to (1,1) to (0.33,0) to (0,0) to (-0.33,0) to (-1,-1);
	\draw[blue!75!black] (-0.66,-1) to (0,0) to (-0.33,0);
	\draw[blue!75!black] (0.66,1) to (1,1) to (0.33,0);
	\end{scope}
		\begin{scope}[xshift=-1cm,yshift=-2cm]
	\path[fill=ggreen!10] (-1,-1) to (-0.66,-1) to (0,0) to (0.66,1) to (1,1) to (0.33,0) to (0,0) to (-0.33,0) to (-1,-1);
		\draw[green!75!black] (-0.66,-1) to (0,0) to (-0.33,0);
	\draw[green!75!black] (0.66,1) to (1,1) to (0.33,0);
	\end{scope}
		\begin{scope}[xshift=-1.33cm,yshift=-2cm]
	\path[fill=red!10] (-1,-1) to (-0.66,-1) to (0,0) to (0.66,1) to (1,1) to (0.33,0) to (0,0) to (-0.33,0) to (-1,-1);
		\draw[red!75!black] (-0.66,-1) to (0,0) to (-0.33,0);
	\draw[red!75!black] (0.66,1) to (1,1) to (0.33,0);
	\end{scope}
	\begin{scope}[xshift=-1.66cm,yshift=-2cm]
	\path[fill=blue!10] (-1,-1) to (-0.66,-1) to (0,0) to (0.66,1) to (1,1) to (0.33,0) to (0,0) to (-0.33,0) to (-1,-1);
	\draw[blue!75!black] (-0.66,-1) to (0,0) to (-0.33,0);
	\draw[blue!75!black] (0.66,1) to (1,1) to (0.33,0);
	\end{scope}
	\begin{scope}[xshift=-2cm,yshift=-2cm]
	\path[fill=ggreen!10] (-1,-1) to (-0.66,-1) to (0,0) to (0.66,1) to (1,1) to (0.33,0) to (0,0) to (-0.33,0) to (-1,-1);
		\draw[green!75!black] (-0.66,-1) to (0,0) to (-0.33,0);
	\draw[green!75!black] (0.66,1) to (1,1) to (0.33,0);
	\end{scope}
	\begin{scope}[xshift=-2.33cm,yshift=-2cm]
	\path[fill=red!10] (-1,-1) to (-0.66,-1) to (0,0) to (0.66,1) to (1,1) to (0.33,0) to (0,0) to (-0.33,0) to (-1,-1);
		\draw[red!75!black] (-0.66,-1) to (0,0) to (-0.33,0);
	\draw[red!75!black] (0.66,1) to (1,1) to (0.33,0);
	\end{scope}

	%paths
	\begin{scope}[yshift=0cm]  
	\draw[black!90,dashed,line width=0.3mm] (-1,0) to (-1,-1) to (0,-1);
	\draw[black!90,line width=0.1mm] (-1,0) to (0,0) to (0,-1);
	%\draw[red!75!black] (-2.5,.5) to (-2,0) to (-1.5,.5) to (-1,0) to (-.5,.5) to (0,0) to (.5,.5) to (1,0) to (1.5,.5) to (2,0) to (2.5,.5);
	\end{scope}

	\node[sgreen dot] at (0,0) {};
	\node[sgreen dot] at (1,1) {};
	\node[sgreen dot] at (1,0) {};
	\node[sgreen dot] at (1,-1) {};
	\node[sgreen dot] at (0,-1) {};
	\node[sgreen dot] at (0,-1) {};
	\node[sgreen dot] at (0,1) {};
	\node[sgreen dot] at (-1,0) {};
	\node[sgreen dot] at (-1,1) {};
	\node[sgreen dot] at (-1,-1) {};
	
	\begin{scope}[xshift=0.33cm]
	 \node[sblue dot] at (0,0) {};
	\node[sblue dot] at (1,1) {};
	\node[sblue dot] at (1,0) {};
	\node[sblue dot] at (1,-1) {};
	\node[sblue dot] at (0,-1) {};
	\node[sblue dot] at (0,-1) {};
	\node[sblue dot] at (0,1) {};
	\node[sblue dot] at (-1,0) {};
	\node[sblue dot] at (-1,1) {};
	\node[sblue dot] at (-1,-1) {};
	\end{scope}
	
	\begin{scope}[xshift=-0.33cm]
	 \node[sred dot] at (0,0) {};
	\node[sred dot] at (1,1) {};
	\node[sred dot] at (1,0) {};
	\node[sred dot] at (1,-1) {};
	\node[sred dot] at (0,-1) {};
	\node[sred dot] at (0,-1) {};
	\node[sred dot] at (0,1) {};
	\node[sred dot] at (-1,0) {};
	\node[sred dot] at (-1,1) {};
	\node[sred dot] at (-1,-1) {};
	\end{scope}

	\node[ggreen] at (-0.51,-.5) {$\Delta_0$};
	\node[red] at (-0.84,-0.5) {$\Delta_1$};
	\node[blue] at (-0.18,-0.5) {$\Delta_2$};
\end{tikzpicture}
\caption{The fundamental domain $(-1,0]^2$ for $m=3$, $k=2$ with chambers $\Delta_0$, $\Delta_1$, $\Delta_2$ .}
\end{figure}

The module 
$$\M=\bigoplus_{i=0}^{m-1}A_{i}$$
is a complete sum of conic modules as in \eqref{eqM}. The graded simples of $\End_R(\M)$ all
have projective dimension $2$, with projective resolution $\Hom_R(\M,\mathcal{K}^\bullet_{\Delta_{i}})$ coming from \eqref{conic-res-dim2}. This implies that $\End_R(\M)$ is an NCCR of $R$; in fact the only NCCR up to Morita equivalence, see \cite[Prop.~2.6]{IyamaWemyss-NCBondalOrlov}.
\end{ex}

\subsection{The non-simplicial case}

The preceding argument always fails in the non-simplicial case for the following geometric reason.

\begin{lemma}
If $R$ is not simplicial, then there exists a chamber of constancy $\Delta$ containing no $0$-cells.
\end{lemma}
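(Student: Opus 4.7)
The plan is to reduce the question to a straightforward dimension count. Since $R$ is not simplicial, the set $\Sigma_1 = \{n_1, \ldots, n_t\}$ is linearly dependent. Because $\Sigma_1$ spans $N_{\mathbb R}$ (as $\sigma$ is pointed), this forces $t > d := \dim M_{\mathbb R}$. I would then consider the injective linear map
\[
\phi: M_{\mathbb R} \longrightarrow \mathbb R^t, \qquad v \longmapsto (\langle v, n_1\rangle, \ldots, \langle v, n_t\rangle),
\]
whose image $V$ is a $d$-dimensional, hence \emph{proper}, subspace of $\mathbb R^t$.

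The key observation, taken essentially from the proof of Proposition \ref{simp}, is that for a chamber $\Delta$ with integer tuple $(d_1, \ldots, d_t)$ as in Remark \ref{ConeList}, a $0$-cell of $\Delta$ corresponds to a simultaneous solution $v$ of the system $\langle v, n_i\rangle = d_i$ for $i = 1, \ldots, t$. Thus $\Delta$ contains a $0$-cell if and only if $(d_1, \ldots, d_t) \in V$. The problem therefore reduces to exhibiting some chamber whose integer tuple lies outside $V$.

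Since $V \subsetneq \mathbb R^t$ is a proper subspace, the standard basis vectors $e_1, \ldots, e_t$ cannot all lie in $V$; pick an index $i_0$ with $e_{i_0} \notin V$. I would then produce two adjacent chambers separated by a hyperplane of the form $\{\langle x, n_{i_0}\rangle = k\}$ for some $k \in \mathbb Z$. Concretely, choose a point $v^{*}$ on such a hyperplane that avoids every other hyperplane of the arrangement $\{\langle x, n_j\rangle \in \mathbb Z\}$, and pick any $w \in M_{\mathbb R}$ with $\langle w, n_{i_0}\rangle \neq 0$; for $\epsilon > 0$ small, $v^{*} \pm \epsilon w$ lie in two adjacent chambers $\Delta$ and $\Delta'$ on opposite sides of the chosen hyperplane. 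By Lemma \ref{Covering} together with Remark \ref{Order}, the integer tuples of $\Delta$ and $\Delta'$ agree in every coordinate except the $i_0$-th, where they differ by one; that is, they differ by $\pm e_{i_0}$.

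If both $\Delta$ and $\Delta'$ contained $0$-cells, both tuples would be in the subspace $V$, and hence so would their difference $\pm e_{i_0}$, contradicting the choice of $i_0$. Therefore at least one of $\Delta, \Delta'$ is a chamber of constancy with no $0$-cell, completing the proof. The only mild subtlety—and the main place care is needed—is the genericity argument guaranteeing that truly adjacent chambers exist across a hyperplane of the form $\{\langle x, n_{i_0}\rangle = k\}$; but this is immediate from the CW decomposition of $M_{\mathbb R}$ given by Proposition \ref{CellDecomp}, since a generic point of such a hyperplane lies in the interior of an open codimension-one cell that bounds exactly two full-dimensional chambers.
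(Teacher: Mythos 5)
The reduction to linear algebra has a genuine gap: the claim that ``$\Delta$ contains a $0$-cell if and only if $(d_1,\ldots,d_t)\in V$'' is false in the non-simplicial case, and the failing direction is exactly the one your final paragraph needs. By the cell description in Proposition \ref{CellDecomp}, a $0$-cell of $\Delta$ is a point $v$ for which the set $\{n_j : \langle v,n_j\rangle = d_j\}$ contains a linearly independent spanning subset, but the \emph{remaining} pairings only need to satisfy $d_j-1 < \langle v,n_j\rangle < d_j$; they do not have to be integers. When $t>d$ this is a real possibility: $v$ can lie on $d$ independent hyperplanes from the arrangement while $\langle v,n_k\rangle$ is non-integral for some other $n_k$. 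In that situation $\phi(v)\in V$, but the tuple $(d_1,\ldots,d_t)$ of $\Delta$ is obtained from $\phi(v)$ by rounding up the non-integral coordinates, so it need not lie in $V$. For instance, with $d=3$ and primitive normals $n_1=(1,0,0)$, $n_2=(0,1,0)$, $n_3=(1,1,3)$, $n_4=(1,1,-3)$ (a valid pointed full-dimensional cone), the point $v=(\tfrac12,0,\tfrac16)$ satisfies $\langle v,n_2\rangle=0$, $\langle v,n_3\rangle=1$, $\langle v,n_4\rangle=0$ with $n_2,n_3,n_4$ spanning, hence is a $0$-cell; but $\langle v,n_1\rangle=\tfrac12$, so the tuple of its chamber is $(1,0,1,0)$, which fails the defining relation $c_1+c_2 = 2c_3+2c_4$ of $V$ — wait, the defining relation of $V$ here is $c_3+c_4 = 2c_1+2c_2$, and $1+0\neq 2\cdot 1 + 2\cdot 0$. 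So this chamber has a $0$-cell whose tuple is \emph{not} in $V$, and your argument that ``both tuples would be in $V$'' does not go through.

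Note that the paper's proof is structured precisely to avoid this pitfall: in the base case it explicitly allows the hypothetical $0$-cell $v$ to satisfy only $d$ of the $d+1$ equations (choosing an index $j$ with $\det(\widehat{n_j})\neq 0$ and $\langle v,n_k\rangle\in\mathbb Z$ for $k\neq j$), and then uses the signed Cramer relation $\sum_i(-1)^i\det(\widehat{n_i})\,n_i=0$ to force a contradiction against a carefully constructed chamber (the one with tuple the Kronecker vector $\delta_{i\cdot}$), followed by induction on $|\Sigma_1|$. Your dimension-count idea would be a genuinely slicker alternative if the equivalence held, but as it stands the converse of your key correspondence fails, and the proof needs a different mechanism (as in the paper) to rule out $0$-cells with a non-empty set $\Omega$ of non-integral pairings.
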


\begin{proof}
We proceed by induction on $|\Sigma_1|$. For the base case, assume $|\Sigma_1|-1=d:=\mathrm{rank}(M)$ and enumerate the elements as $n_1,n_2,\ldots ,n_{d+1}$. Choose an identification $\det:\Lambda^dN\garrow{\sim} \mathbb{Z}$, and set 
\[ \det(\widehat{n_i}) := \det(n_1\wedge n_2\wedge \ldots \wedge \widehat{n_i}\wedge \ldots \wedge n_{d+1}) \]
The identity $n_1\wedge n_2\wedge \ldots \wedge {n_i}\wedge \ldots \wedge n_{d+1} =0$ in the exterior algebra of $N$ implies 
\begin{equation}\label{eq: Cramer}
\sum_{i=1}^{d+1} (-1)^i \det(\widehat{n_i}) n_i =0 \ . 
\end{equation}

Pick an $i$ such that $|\mathrm{det}(\widehat{n_i})|$ is maximized; in particular, $\mathrm{det}(\widehat{n_i})\neq0$. 
First, we claim there is a chamber of constancy $\Delta$ given by
\[ \Delta = \{ x\in M_\mathbb{R} \mid \delta_{ij}-1 < \langle x,n_j \rangle \leq \delta_{ij} \text{ for all }j=1,\ldots,d+1\} \]
where $\delta_{ij}$ is the Kronecker $\delta$.\footnote{Equivalently, the negative standard basis vector $-e_i$ is a conic divisor; see Remark \ref{ConeList}.} 
%with divisor
%\[ (0,0,\ldots,0,1,0,\ldots,0)\in \mathbb{Z}^{d+1}\]
%with $1$ in the $i$th coordinate. 
To see this, consider a point $w$ in the interior of the facet of $C$ with normal $n_i$. This implies that $\langle w,n_j\rangle\geq0$, with equality if and only if $i=j$. For sufficiently small $\varepsilon>0$, we have $-1< \langle -\varepsilon w,n_j\rangle\leq 0$, with equality if and only if $i=j$. Adding a small perturbation $p$, we find a point $-\varepsilon w+p$ with
\[ \lceil\langle -\varepsilon w + p, n_j \rangle \rceil = \delta_{ij} \text{ for all }j=1,\ldots ,d+1 \]
and so $-\varepsilon w+p$ is in the desired chamber of constancy $\Delta$.

Now, assume that $\Delta$ does have a $0$-cell, at a point $v\in M_\mathbb{R}$. This means $v$ is in $d$-many linearly independent hyperplanes, or equivalently, we can find a $j\in \{1,2,\ldots,d+1\}$ such that $\det(\widehat{n_j})\neq0$ and
\[ \langle v,n_k\rangle = \lceil \langle v,n_k\rangle \rceil  \text{ for all $k\neq j$} \ . \]
By the defining equations of $\Delta$, $\lceil \langle v,n_k\rangle \rceil =1 $ if $k=i$ and $0$ otherwise.
Therefore, applying $\langle v,-\rangle$ to \eqref{eq: Cramer} yields
\[ 0 = \sum_{k=1}^{d+1} (-1)^k \det(\widehat{n_k}) \langle v, n_k \rangle = (-1)^i\det(\widehat{n_i}) + (-1)^j \det(\widehat{n_j})\langle v,n_j\rangle \ . \]
Since $\det(\widehat{n_i}) \neq 0$, this equation implies that both $\det(\widehat{n_j})$ and $\langle v, n_j \rangle$ are non-zero. Solving for $\langle v,n_j\rangle$, we find
\[ \langle v, n_j\rangle = (-1)^{j-i+1}\frac{\det(\widehat{n_i})}{\det(\widehat{n_j})} \ , \]
so rounding up, we have $\langle v, n_j \rangle \neq 0$. 
This contradicts the assumption that $v$ was in the chamber $\Delta$. Therefore, $\Delta$ has no $0$-cells.

Next, assume the proposition is true whenever $|\Sigma_1|=i$ for some $i>d$. Consider a $C$ with $|\Sigma_1|=i+1$, and enumerate the elements $n_1,n_2,\ldots,n_{i+1}$. Since $C$ is pointed, the set of normals is a spanning set, and so there is an $i$-tuple of them which is also a spanning set. Without loss of generality, assume $n_1,n_2,\ldots,n_i$ is a spanning set, and let $C'$ be the cone they define.

Consider the chambers of constancy associated to the cone $C'$. By the inductive hypothesis, there is a chamber $\Delta'$ which does not contain a $0$-cell. The chamber $\Delta'$ is a union of chambers of constancy of $C$, each corresponding to a different value of $\lceil \langle -,n_{i+1}\rangle\rceil$. Let $\Delta$ be the chamber of $C$ contained in $\Delta'$ on which $\lceil \langle -,n_{i+1}\rangle\rceil$ is maximal.

A $0$-cell of $\Delta$ would have to be the intersection of a $1$-cell in $\Delta'$ with a hyperplane of the form $\langle  v,n_{i+1}\rangle =a$ for some $a\in \mathbb{Z}$. However, if the $1$-cell is transverse to the hyperplane, then there are points in $\Delta'$ on which $\langle  v,n_{i+1}\rangle >a$, contradicting maximality. If the $1$-cell is contained in the hyperplane, then their intersection is not a point. Therefore, $\Delta$ does not contain a $0$-cell, completing the induction.
\end{proof}

\begin{prop}\label{nonsimplicialnonNCCR}
If $R$ is a non-simplicial toric algebra and $\M$ is a complete sum of conic modules, then $\mathrm{End}_R(\M)$ is not an NCCR of $R$.
\end{prop}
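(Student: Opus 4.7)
The plan is to produce a simple (ungraded) right $\End_R(\M)$-module whose projective dimension is strictly less than $d := \mathrm{rank}(M)$, which directly violates Definition \ref{NCCRdef}. The starting ingredient is the lemma immediately preceding this proposition: since $R$ is non-simplicial, it produces a chamber of constancy $\Delta$ containing no $0$-cell. Corollary \ref{pdim} then gives that the graded simple $\End_R(\M)$-module $S_\Delta = P_\Delta / \mathrm{rad}(P_\Delta)$ has graded projective dimension equal to $\max_{\tau \in \Delta} \mathrm{codim}(\tau)$, which is strictly less than $d$.

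The remaining task is to upgrade this from a graded to an ungraded statement. I would use the explicit description $\mathrm{Rad}(A_{\Delta'}, A_\Delta) = \Span\{x^m \mid \Delta' + m \prec \Delta\}$ from Proposition \ref{Rad} to observe that a non-radical degree-$m$ homomorphism $A_{\Delta'} \to A_\Delta$ can exist only when $\Delta' + m = \Delta$, which in turn requires $A_{\Delta'} \simeq A_\Delta$ and then determines $m$ uniquely. Consequently $S_\Delta$ is supported in a single $M$-graded degree. Any submodule of a module concentrated in one $M$-degree is automatically homogeneous, so ungraded submodules of $S_\Delta$ coincide with graded submodules; graded simplicity of $S_\Delta$ therefore upgrades to ungraded simplicity. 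By the same token, the graded projective resolution $\Hom_R(\M, K_\Delta^\bullet)$ from Theorem \ref{projective} remains a projective resolution of $S_\Delta$ after forgetting the grading, so the ungraded projective dimension of $S_\Delta$ equals its graded projective dimension, hence is $<d$.

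Combining these, $S_\Delta$ is an ungraded simple $\End_R(\M)$-module of projective dimension strictly less than $d$, so $\End_R(\M)$ fails to be an NCCR by Definition \ref{NCCRdef}. I anticipate that the main obstacle is the graded-to-ungraded transition (both for simplicity and for the projective dimension), but the decisive observation that $S_\Delta$ lies entirely in a single $M$-degree makes both comparisons essentially automatic and requires no new complexes beyond the $K^\bullet_\Delta$ already constructed.
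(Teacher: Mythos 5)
Your proposal is correct and follows the same skeleton as the paper's proof: the preceding lemma gives a chamber $\Delta$ with no $0$-cell, Corollary \ref{pdim} gives $\mathrm{pdim}(S_\Delta) < \dim R$, and this violates the NCCR condition. The one thing you do that the paper's (very terse) proof does not spell out is the graded-to-ungraded transition: Definition \ref{NCCRdef} quantifies over all simple $\End_R(\M)$-modules, whereas $S_\Delta$ is a priori only the \emph{graded} simple quotient. Your observation---that $\mathrm{Rad}(A_{\Delta'},A_\Delta)$ has codimension at most one in $\Hom_R(A_{\Delta'},A_\Delta)$, with the non-radical line sitting in the unique $M$-degree $m$ satisfying $\Delta'+m=\Delta$, so that $S_\Delta$ is concentrated in a single $M$-degree and therefore has no proper nonzero submodules even in the ungraded sense---is exactly the right justification, and likewise the graded projective resolution from Theorem \ref{projective} is of course still projective and exact after forgetting the grading. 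This is a genuine and correct fill-in rather than a different route, and it makes the appeal to Definition \ref{NCCRdef} airtight.
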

\begin{proof}
By the preceding lemma, there is a chamber of constancy $\Delta$ with no $0$-cell. By Corollary \ref{pdim}, the simple $\mathrm{End}_R(\M)$-module $S_\Delta$ has projective dimension strictly less than $\dim(R)$. Hence, not every simple $\mathrm{End}_R(\M)$-module has the same projective dimension, and so it is not an NCCR.
\end{proof}

\begin{ex}
Look again at the cone over the square in $\RR^3$ from Examples \ref{non-simplicial-conic-complex} and \ref{non-simplicial}. Here $\End_R(\M)$, for $\M=A_0\oplus A_1 \oplus A_2$,  is of global dimension $3$ but has two simples of projective dimension $2$, coming from the two tetrahedral chambers of constancy. This reproves the fact that $\End_R(\M)$ is not an NCCR.\footnote{See Chapter 5 of \cite{Quarles} or Theorem P.2 of \cite{Leuschke}.}

However, this can be fixed by omitting one of the two tetrahedral conic modules. For example, if we omit $A_2$, we can build analogous conic complexes 
\begin{align*}
 A_0 \longrightarrow A_1\oplus A_0^{\oplus 4}  & \longrightarrow  A_0^{\oplus4}\oplus A_1^{\oplus2}  \longrightarrow A_0 \ , \\
 A_1 \longrightarrow A_0^{\oplus2} & \longrightarrow A_0^{\oplus2} \longrightarrow A_1 \ , 
\end{align*}
which the functor $\mathrm{Hom}_R(A_0\oplus A_1,-)$ takes to minimal length projective resolutions of the only two graded simple $\mathrm{End}_R(A_0\oplus A_1)$-modules.
By an analogous argument to the simplicial case, this implies $\End_R(A_0\oplus A_1)$ is an NCCR of $R$.% By as is $\End_R(A_0\oplus A_2)$, by symmetry).
\end{ex}

%\subsection{NCCRs from incomplete sums of conic modules}
%
%The preceding example suggests a natural question. \emph{Can we find a sum of conic modules $\M$ (not necessarily complete) such that $\mathrm{End}_R(\M)$ is an NCCR of $R$?} Further examples of such NCCRs have been found for certain classes of toric rings \cite{HibiRings, Broomhead, BocklandtNCCR}. 
%
%%Such an $\mathrm{End}_R(\M)$ must be Cohen-Macaulay, and so $\mathrm{Hom}_R(A_1,A_2)$ must be Cohen--Macaulay for any pair of conic summands $A_1,A_2\subset \M$. 
%
%We can refine this a bit. If we say two conic modules $A_1$ and $A_2$  are \textbf{compatible} when $\mathrm{Hom}_R(A_1,A_2)$ and $\mathrm{Hom}_R(A_2,A_1)$ are Cohen-Macaulay, then $\mathrm{End}_R(\M)$ is Cohen-Macaulay if and only if every pair of summands in $\M$ are compatible. The prior question then becomes: \emph{Is there a collection of pairwise compatible conic modules, such that the endomorphism algebra of their sum has finite global dimension?}
%
%%A sum of pairwise compatible conic modules is a special case of a \emph{modifying module} (a module whose endomorphism module is Cohen--Macaulay), and so 
%
%This is closely related with the study of \emph{maximal modification algebras (MMAs)} of a ring $R$: rings of the form $\mathrm{End}_R\left(\bigoplus M\right)$, where the sum is over a maximal collection of pairwise compatible Cohen-Macaulay modules; see \cite{IyamaWemyss} for more details.

\section{Acknowledgements} The authors are grateful to Colin Ingalls for his interest and stimulating discussions while he was visiting Ann Arbor. The first author wishes to thank
Michael Wemyss for helpful discussions, as well, and the third author Kevin Tucker and Michel Van den Bergh. We also want to thank the referee for valuable comments and suggestions.

%\bibliographystyle{alpha}
%\bibliography{UniversalBib}
%\nocite{A-M, Hart, Z-S, Hoc1, ST1}

 \def\cprime{$'$}

\end{document}